\newcommand{\roundNbox}[6]{
	\draw[rounded corners=5pt, very thick, #1] ($#2+(-#3,-#3)+(-#4,0)$) rectangle ($#2+(#3,#3)+(#5,0)$);
	\coordinate (ZZa) at ($#2+(-#4,0)$);
	\coordinate (ZZb) at ($#2+(#5,0)$);
	\node at ($1/2*(ZZa)+1/2*(ZZb)$) {#6};
}
\tikzset{string/.style={ultra thick}}
\tikzset{smallstring/.style={thick,scale=0.75,every node/.style={transform shape}}}
\tikzset{
    triple/.style args={[#1] in [#2] in [#3]}{
        #1,preaction={preaction={draw,#3},draw,#2}
    }
}
\tikzset{
    quadruple/.style args={[#1] in [#2] in [#3] in [#4]}{
        #1,preaction={preaction={preaction={draw,#4},draw,#3}, draw,#2}
    }
}
\tikzset{
	super thick/.style={line width=3pt},
	more thick/.style={line width=1pt},
}
\definecolor{dark-red}{rgb}{0.7,0.25,0.25}
\definecolor{dark-blue}{rgb}{0.15,0.15,0.55}
\definecolor{medium-blue}{rgb}{0,0,0.65}
\definecolor{DarkGreen}{RGB}{0,150,0}
\newcommand{\googlebooks}[1]{(preview at \href{https://books.google.com/books?id=#1}{google books})}
\newcommand{\numdam}[1]{}
\theoremstyle{plain}
\newtheorem{prop}{Proposition}[section]
\newtheorem{thm}[prop]{Theorem}
\newtheorem{lem}[prop]{Lemma}
\newtheorem{cor}[prop]{Corollary}
\numberwithin{equation}{section}
\theoremstyle{remark}
\newtheorem{example}[prop]{Example}
\newtheorem{remark}[prop]{Remark}
\theoremstyle{definition}
\newtheorem{defn}[prop]{Definition}         
\newtheorem{fact}[prop]{Fact}
\newtheorem{assume}[prop]{Assumption}
\newcommand{\sslash}{\mathbin{/\mkern-6mu/}}
\DeclareMathOperator{\Ad}{Ad}
\DeclareMathOperator{\id}{id}
\DeclareMathOperator{\im}{im}
\DeclareMathOperator{\Inv}{Inv}
\DeclareMathOperator{\core}{core}
\DeclareMathOperator{\Hom}{Hom}
\DeclareMathOperator{\End}{End}
\DeclareMathOperator{\Forget}{Forget}
\DeclareMathOperator{\FPdim}{FPdim}
\DeclareMathOperator{\Irr}{Irr}
\DeclareMathOperator{\Aut}{Aut}
\DeclareMathOperator{\op}{op}
\DeclareMathOperator{\Stab}{Stab}
\newcommand{\Path}{{\mathsf {Path}}}
\newcommand{\hoFib}{{\mathsf {hoFib}}}
\newcommand{\stFib}{{\mathsf {stFib}}}
\newcommand{\Fib}{{\mathsf {Fib}}}
\newcommand{\BrPic}{{\mathsf {BrPic}}}
\newcommand{\Ext}{{\mathsf {Ext}}}
\newcommand{\uBrPic}{\uline{\mathsf {BrPic}}}
\newcommand{\uuBrPic}{\uuline{\mathsf {BrPic}}}
\newcommand{\uuPic}{\uuline{\mathsf {Pic}}}
\newcommand{\uAut}{\uline{\Aut}}
\newcommand{\uStab}{\uline{\Stab}}
\newcommand{\uG}{\uline{G}}
\newcommand{\uuG}{\uuline{G}}
\newcommand{\urho}{\uline{\rho}}
\newcommand{\uurho}{\uuline{\rho}}
\newcommand{\uupi}{\uuline{\pi}}
\newcommand{\Cat}{{\mathsf {Cat}}}
\newcommand{\MonCat}{{\mathsf {MonCat}}}
\newcommand{\FusCat}{{\mathsf {FusCat}}}
\newcommand{\BrdMonCat}{{\mathsf {BrdMonCat}}}
\newcommand{\GrdFusCat}{{\mathsf {GrdFusCat}}}
\newcommand{\GrdMonCat}{{\mathsf {GrdMonCat}}}
\newcommand{\FibFusCat}{{\mathsf {FibFusCat}}}
\newcommand{\CrsBrd}{{\mathsf {CrsBrd}}}
\newcommand{\BrFus}{{\mathsf {BrdFus}}}
\renewcommand{\Vec}{{\mathsf {Vec}}}
\newcommand{\Rep}{{\mathsf {Rep}}}
\newcommand{\br}{{\rm br}}
\newcommand{\rev}{{\rm rev}}
\newcommand{\loc}{{\rm loc}}
\newcommand{\sZ}{{\scriptscriptstyle Z}}
\newcommand{\set}[2]{\left\{#1\middle|#2\right\}}
\def\semicolon{;}
\def\applytolist#1{
    \expandafter\def\csname multi#1\endcsname##1{
        \def\multiack{##1}\ifx\multiack\semicolon
            \def\next{\relax}
        \else
            \csname #1\endcsname{##1}
            \def\next{\csname multi#1\endcsname}
        \fi
        \next}
    \csname multi#1\endcsname}
\def\calc#1{\expandafter\def\csname c#1\endcsname{{\mathcal #1}}}
\def\bbc#1{\expandafter\def\csname bb#1\endcsname{{\mathbb #1}}}
\def\bfc#1{\expandafter\def\csname bf#1\endcsname{{\mathbf #1}}}
\newlength{\L@UnitsRaiseDisplaystyle}
\newlength{\L@UnitsRaiseTextstyle}
\newlength{\L@UnitsRaiseScriptstyle}
\DeclareRobustCommand*{\@UnitsNiceFrac}[3][]{%
  \ifthenelse{\boolean{mmode}}{%
    \settoheight{\L@UnitsRaiseDisplaystyle}{%
      \ensuremath{\displaystyle#1{M}}%
    }%
    \settoheight{\L@UnitsRaiseTextstyle}{%
      \ensuremath{\textstyle#1{M}}%
    }%
    \settoheight{\L@UnitsRaiseScriptstyle}{%
      \ensuremath{\scriptstyle#1{M}}%
    }%
    \settoheight{\@tempdima}{%
      \ensuremath{\scriptscriptstyle#1{M}}%
    }%
    \addtolength{\L@UnitsRaiseDisplaystyle}{%
      -\L@UnitsRaiseScriptstyle%
    }%
    \addtolength{\L@UnitsRaiseTextstyle}{%
      -\L@UnitsRaiseScriptstyle%
    }%
    \addtolength{\L@UnitsRaiseScriptstyle}{-\@tempdima}%
    \mathchoice
      {%
        \raisebox{\L@UnitsRaiseDisplaystyle}{%
          \ensuremath{\scriptstyle#1{#2}}%
        }%
      }%
      {%
        \raisebox{\L@UnitsRaiseTextstyle}{%
          \ensuremath{\scriptstyle#1{#2}}%
        }%
      }%
      {%
        \raisebox{\L@UnitsRaiseScriptstyle}{%
          \ensuremath{\scriptscriptstyle#1{#2}}%
        }%
      }%
      {%
        \raisebox{\L@UnitsRaiseScriptstyle}{%
          \ensuremath{\scriptscriptstyle#1{#2}}%
        }%
      }%
    \mkern-2mu{\sslash}\mkern-1mu%
    \bgroup
      \mathchoice
        {\scriptstyle}%
        {\scriptstyle}%
        {\scriptscriptstyle}%
        {\scriptscriptstyle}%
      #1{#3}%
    \egroup
  }%
  {%
    \settoheight{\L@UnitsRaiseTextstyle}{#1{M}}%
    \settoheight{\@tempdima}{%
      \ensuremath{%
        \mbox{\fontsize\sf@size\z@\selectfont#1{M}}%
      }%
    }%
    \addtolength{\L@UnitsRaiseTextstyle}{-\@tempdima}%
    \raisebox{\L@UnitsRaiseTextstyle}{%
      \ensuremath{%
        \mbox{\fontsize\sf@size\z@\selectfont#1{#2}}%
      }%
    }%
    \ensuremath{\mkern-2mu}{\sslash}\ensuremath{\mkern-1mu}%
    \ensuremath{%
      \mbox{\fontsize\sf@size\z@\selectfont#1{#3}}%
    }%
  }%
}
\DeclareRobustCommand*{\nicefrac}{\@UnitsNiceFrac}%
\begin{document}
\title{Extension theory for braided-enriched fusion categories}
\author{Corey Jones}
  \address[Corey Jones]{
    Department of Mathematics,
    North Carolina State University
  }
  \email{\href{mailto:cmjones6@ncsu.edu}{cmjones6@ncsu.edu}}
\author{Scott Morrison}
 \address[Scott Morrison]{
   University of Sydney
  }
  \email{\href{mailto:scott@tqft.net}{scott@tqft.net}}
\author{David Penneys}
  \address[David Penneys]{
    Department of Mathematics,
    The Ohio State University
  }
  \email{\href{mailto:penneys.2@osu.edu}{penneys.2@osu.edu}}
\author{Julia Plavnik}
  \address[Julia Plavnik]{
    Department of Mathematics,
    Indiana University
  }
  \email{\href{mailto:jplavnik@iu.edu}{jplavnik@iu.edu}}
\begin{abstract}
For a braided fusion category $\mathcal{V}$, a $\mathcal{V}$-fusion category is a fusion category
$\mathcal{C}$ equipped with a braided monoidal functor $\mathcal{F}:\mathcal{V} \to Z(\mathcal{C})$.
Given a fixed $\mathcal{V}$-fusion category $(\mathcal{C}, \mathcal{F})$ and a fixed $G$-graded
extension $\mathcal{C}\subseteq \mathcal{D}$ as an ordinary fusion category, we characterize the
enrichments $\widetilde{\mathcal{F}}:\mathcal{V} \to Z(\mathcal{D})$ of $\mathcal{D}$ which are
compatible with the enrichment of $\mathcal{C}$.
We show that G-crossed extensions of a braided fusion category $\mathcal{C}$ are G-extensions of the
canonical enrichment of $\mathcal{C}$ over itself. As an application, we parameterize the set of $G$-crossed
braidings on a fixed $G$-graded fusion category in terms of certain subcategories of its center,
extending Nikshych's classification of the braidings on a fusion category.
\end{abstract}
\maketitle

\section{Introduction}

In previous articles \cite{MR3961709,1809.09782} we defined monoidal categories enriched in a
braided monoidal category $\cV$, and showed this notion was equivalent to an oplax, strongly unital,
braided monoidal functor from $\cV$ into the Drinfeld center of an ordinary monoidal category. When
the functor $\cF^\sZ : \cV \to Z(\cC)$ is strong monoidal, this coincides with the notion of a 1-morphism
$\cV \to \Vec$ in a suitable Morita 4-category \cite{MR4228258} (see also \S\ref{sec:4CatOfBraidedTensorCategories} below), and with the module tensor categories of
\cite{1607.06041}. 
Recent work of Kong and Zheng uses monoidal categories enriched in a braided
category to give a unified treatment of gapped and gapless edges for 2D topological orders
\cite{MR3763324,1903.12334,1905.04924}. 
Of particular importance is the case where $\cV$ is a
braided fusion category and $\cF^\sZ: \cV \to Z(\cC)$ is a braided strong monoidal functor into the
Drinfeld center of another fusion category $\cC$. 
We call such a pair $(\cC, \cF^\sZ)$ a
$\cV$-\emph{fusion category}.

The extension theory for fusion categories of \cite{MR2677836} has proven to be an immensely
important tool. 
Particular applications include the process of gauging a global symmetry on a
modular tensor category \cite{1410.4540,MR3555361}, permutation symmetries on modular tensor
categories \cite{MR3959559}, rank finiteness for ($G$-crossed) braided fusion categories
\cite{1902.06165}, and classification theorems for tensor categories generated by an object of small
dimension \cite{MR3952486,MR4076701}.

In this article, we define the notion of a $G$-graded extension of a $\cV$-fusion category. We begin
by proving that $G$-gradings on a fusion category $\cC$ are equivalent to
liftings of a fixed fiber functor $\Rep(G) \to \Vec = \langle 1_\cC\rangle \subseteq \cC$ to
$Z(\cC)$. Fixing such a $G$-grading $\cC = \bigoplus_{g\in G} \cC_g$, we see that an object $(c,
\sigma_{\cdot, c}) \in Z(\cC)$ satisfies $c\in \cC_e$ if and only if $(c, \sigma_{\cdot, c})$
lies in the M\"uger centralizer $\Rep(G)'$. 
Given this, we define a $G$-graded $\cV$-fusion category to be a $\cV$-fusion
category $(\cC,\cF^\sZ)$ such that the underlying fusion category $\cC = \bigoplus_{g\in G}\cC_g$ is
$G$-graded and $\cF^\sZ(\cV) \subseteq \Rep(G)' \subset Z(\cC) $.

\begin{thm}
\label{thm:FirstMainTheorem}
Fix a $G$-graded extension $\cC \subseteq \cD$ of ordinary fusion categories, and a $\cV$-fusion category structure $(\cC, \cF^\sZ)$ on $\cC$.
The following sets are in canonical bijection.
\begin{itemize}
\item
For all $v\in \cV$, extensions of the half-braiding for $\cF^{\scriptscriptstyle Z}(v)$ with $\cC$
to a half-braiding with all of $\cD$ coherently with respect to morphisms in $\cV$.
\item
Lifts $\widetilde{\uupi}: \uuG \to \uuBrPic^\cV(\cC)$ of the monoidal 2-functor $\uupi:\uuG \to
\uuBrPic(\cC)$ afforded by the $G$-extension $\cD$ (up to homotopy), where $\uuBrPic^\cV(\cC)$ is our newly introduced $\cV$-enriched Brauer-Picard 2-groupoid (see Definition \ref{defn:VBrauerPicardGroupoid} below).
\item
Lifts $\widetilde{\cF}^{\scriptscriptstyle Z} : \cV \to Z(\cC)^G$ such that $\Forget_G\circ
\widetilde{\cF}^{\scriptscriptstyle Z} = \cF^{\scriptscriptstyle Z}$ where the categorical
$G$-action $\urho:\uG \to \uAut_\otimes^\br(Z(\cC))$ comes from the $G$-extension $\cC \subseteq
\cD$ and $\Forget_G: Z(\cC)^G \to Z(\cC)$ forgets the $G$-equivariant structure.
\end{itemize}
\end{thm}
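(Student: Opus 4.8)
The plan is to treat the middle description---the equivariant lift $\widetilde{\cF}^{\sZ}:\cV \to Z(\cC)^G$---as the hub, and to establish separate bijections with each of the other two descriptions. The structural input common to both is the classification of $G$-extensions: the extension $\cC \subseteq \cD$ is encoded by the invertible bimodule categories $\cC_g = \uupi(g)$ together with fusion equivalences $\cC_g \boxtimes_\cC \cC_h \simeq \cC_{gh}$, and the categorical $G$-action $\urho$ on $Z(\cC)$ is ``conjugation'' by these bimodules, $\rho_g = (\cC_g)\otimes_\cC(-)\otimes_\cC(\cC_{g^{-1}})$ interpreted on half-braidings. I would first settle the bijection between the first and third descriptions, since it is essentially the unpacking of how this $G$-action arises, and then do the more delicate identification of the third with the second.

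For the first bijection, fix $v \in \cV$ and write $\cF^{\sZ}(v) = (F(v), \sigma^v)$ with $F(v) \in \cC_e$. Because $\cD = \bigoplus_{g} \cD_g$, an extension of $\sigma^v$ to a half-braiding with all of $\cD$ amounts, for each $g$, to a natural family of isomorphisms $d \otimes F(v) \to F(v) \otimes d$ for $d \in \cD_g$. I would show that this $g$-graded piece is exactly an isomorphism $u_g: \rho_g(\cF^{\sZ}(v)) \xrightarrow{\sim} \cF^{\sZ}(v)$ in $Z(\cC)$: transporting $F(v)$ across the invertible bimodule $\cD_g$ is the underlying operation of $\rho_g$, and the hexagon axiom for the half-braiding over the product $\cD_g \otimes \cD_h \to \cD_{gh}$ becomes the cocycle identity $u_{gh} = u_g \circ \rho_g(u_h)$ of a $G$-equivariant structure. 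Thus the extended half-braidings for a single $v$ correspond to lifts of $\cF^{\sZ}(v)$ to $Z(\cC)^G$, and the stipulated coherence in $v$ is precisely what makes these lifts assemble into a braided monoidal functor $\widetilde{\cF}^{\sZ}$ with $\Forget_G \circ \widetilde{\cF}^{\sZ} = \cF^{\sZ}$.

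For the second bijection I would invoke the equivalence of \cite{MR3961709,1809.09782} between $\cV$-enrichments of a monoidal category $\cM$ and braided (oplax, strongly unital) monoidal functors $\cV \to Z(\cM)$, applied fibrewise to the Brauer--Picard data. Unwinding Definition \ref{defn:VBrauerPicardGroupoid}, a lift $\widetilde{\uupi}: \uuG \to \uuBrPic^\cV(\cC)$ of $\uupi$ equips each invertible bimodule $\cC_g$ with the structure making the central functor $\cF^{\sZ}$ compatible with transport along $\cC_g$, plus $2$-morphism data witnessing compatibility with the fusion $\cC_g \boxtimes_\cC \cC_h \simeq \cC_{gh}$. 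I would check that this data is exactly a trivialization $\rho_g \circ \cF^{\sZ} \cong \cF^{\sZ}$ coherent in $g$, that is, the $G$-equivariant structure produced above; the monoidal $2$-functor constraints of $\widetilde{\uupi}$ translate into the associativity and unit axioms of the equivariantization, so that homotopy classes of lifts of $\uupi$ biject with lifts $\widetilde{\cF}^{\sZ}$ to $Z(\cC)^G$.

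The main obstacle will be this second bijection: matching the full $2$-categorical coherence of a monoidal $2$-functor into $\uuBrPic^\cV(\cC)$---the adjunction/invertibility data of bimodules, the higher constraints, and the ``up to homotopy'' identification---against the comparatively economical cocycle data of a $G$-equivariant object of $Z(\cC)$. The bookkeeping is tamed by the observation that $\uuG$ has contractible mapping spaces above the level of $1$-morphisms, so the only genuine choices live over the generators $g$ and the composites $(g,h)$; once the enriched-center equivalence is used to reinterpret the bimodule data as central-functor data, what remains is a coherence diagram chase rather than new conceptual content. By contrast the first bijection is, after the identification of $\urho$ with conjugation by the $\cC_g$, a direct translation between half-braidings and equivariance isomorphisms.
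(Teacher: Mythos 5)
Your proposal is correct in outline, but it routes the proof differently from the paper. The paper takes the \emph{first} description as its hub: it observes that coherent extensions of half-braidings are exactly the data of a braided functor $\widetilde{\cF}^{\sZ}:\cV\to\Rep(G)'\subset Z(\cD)$ with $\Forget_\cC\circ\widetilde{\cF}^{\sZ}=i\circ\cF^{\sZ}$, then proves first~$\leftrightarrow$~second (Theorem \ref{thm:LiftsToEnrichedBrPic}) by the direct dictionary $\eta^g_{v,m}:=\sigma_{m,F(v)}$ between the $\cD_g$-components of an extended half-braiding and centered structures on the bimodules $\uupi(g)$, and proves first~$\leftrightarrow$~third (Theorem \ref{thm:ClassificationOfVEnrichmentsAsEquivariantLifts}) purely formally, by inserting the equivalences $Z(\cD)\cong Z_\cC(\cD)^G$ and $\Rep(G)'\cong Z(\cC)^G$ of \cite{MR2587410} (diagram \eqref{eq:RelativeCenterEquivariantization}) and cancelling the faithful inclusion $i$. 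You instead take the \emph{third} description as hub: your first leg re-derives by hand, at the object level, precisely the piece of \cite{MR2587410} the paper cites (the $g$-graded piece of an extended half-braiding is a trivialization $u_g:\rho_g(\cF^{\sZ}(v))\to\cF^{\sZ}(v)$, with the mixed hexagons supplying both the fact that $u_g$ is a morphism in $Z(\cC)$ and the equivariance cocycle), while your second leg additionally needs the identification of $\rho_g$ with transport across $\cD_g$ --- the mechanism behind $\uuBrPic(\cC)\simeq\uAut^{\br}_\otimes(Z(\cC))$ of \cite{MR2677836} --- to convert centered structures into trivializations of $\rho_g\circ\cF^{\sZ}$. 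Composing your two legs reproduces the paper's direct first~$\leftrightarrow$~second dictionary, so the total mathematical content agrees; what your arrangement buys is a self-contained cocycle-level argument that avoids invoking the equivariantization theorem as a black box, at the price of carrying the $G$-action bookkeeping explicitly. Two cautions on that bookkeeping: your cocycle identity should carry the tensorator of the action, $u_{gh}\circ(\mu_{g,h})_{\cF^{\sZ}(v)}=u_g\circ\rho_g(u_h)$, as in Example \ref{ex:LiftToEquivariantization}; and your stated reason for why the higher coherence collapses (``$\uuG$ has contractible mapping spaces above the level of $1$-morphisms'') is not quite right, since a monoidal $2$-functor out of $\uuG$ still carries nontrivial coherence data --- what actually kills the higher choices, and identifies the set of lifts up to homotopy with the strict fiber, is that $\Forget_\cV$ is fully faithful on $2$-morphisms and a discrete fibration, i.e., the $0$-truncation formalism of \S\ref{sec:TruncationOfHomotopyFibers}.
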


This theorem characterises the possible enrichments $\widetilde{\cF}^\sZ: \cV \to Z(\cD)$ of $(\cC, \cF^\sZ)$ which are compatible with the
fixed $G$-graded extension $\cC \subseteq \cD$. The proof uses extension theory for fusion categories of \cite{MR2677836} together with the results of \cite{MR2587410}.

Observe that the structures listed in Theorem \ref{thm:FirstMainTheorem} are more naturally viewed as collections of objects in higher groupoids rather than sets, and it would be more natural to prove an equivalence of groupoids rather than construct a bijection between these sets.
However, each of these higher groupoids is in fact 0-\emph{truncated}, i.e., equivalent to a $0$-groupoid, which is a set.
We make this rigorous by showing homotopy fibers of certain forgetful functors are 0-truncated and equivalent to strict fibers.
We discuss these notions in detail in \S\ref{sec:TruncationOfHomotopyFibers} on homotopy fibers of forgetful functors.

Thus one of our canonical bijections is typically constructed as a composite of bijections
$$
\left\{
\parbox{.8cm}{set 1}
\right\}
\cong
\left\{
\parbox{2cm}{strict fiber 1}
\right\}
\cong
\tau_0
\left\{
\parbox{2.9cm}{homotopy fiber 1}
\right\}
\cong
\tau_0
\left\{
\parbox{2.9cm}{homotopy fiber 2}
\right\}
\cong
\left\{
\parbox{2cm}{strict fiber 2}
\right\}
\cong
\left\{
\parbox{.8cm}{set 2}
\right\}
$$
where $\tau_0$ denotes taking the $0$-truncation.
This strategy is also employed to construct the canonical bijections asserted in Theorem \ref{thm:ClassificationOfGCrossedBraidings}, 
Corollary \ref{cor:fiberedenrichments}, 
Theorem \ref{thm:LiftsToEnrichedBrPic}, 
Theorem \ref{thm:ClassificationOfVEnrichmentsAsEquivariantLifts}, 
Theorem \ref{thm:ClassificationOfLiftingsViaSplittings}, 
and 
Theorem \ref{thm:ClassificationOfGCrossedBraidingsUsingBrPicC}.
We would like to emphasize that these results prove equivalences of cores of various higher categories, which happen to be 0-truncated, by providing a bijection on the 0-truncations. 
It would be interesting to see if some of these canonical bijections could be lifted to functorial constructions on the cores, or even on the higher categories.

The third description of compatible enrichments in Theorem \ref{thm:FirstMainTheorem} bears many similarities to the classification
from \cite{MR3933137} of $G$-equivariant structures on a connected \'{e}tale algebra in a
nondegenerately braided fusion category. Adapting the arguments and techniques from
\cite{MR3933137}, we see that there are two \emph{obstructions} to lifting our $\cV$-enrichment.
First, for every $g\in G$, we must have that $\cF^\sZ\cong g\circ \cF^\sZ$ as monoidal functors
$\cV\to Z(\cC)$. We call the existence of such monoidal natural isomorphisms for $g\in G$ the \emph{first
obstruction} to the equivariant functor lifting problem. When such monoidal natural isomorphisms
exist, we say $\cD$ \emph{passes} the first obstruction, or that the first obstruction
\emph{vanishes}. In this case, similar to \cite{MR3933137}, we show that lifts $\widetilde{\urho}:
\uG \to \uAut(Z(\cC)|\cF^{\scriptscriptstyle Z})$ correspond to splittings of a certain exact
sequence.

\begin{thm}
There is a short exact sequence
\begin{equation}
\label{eq:SES intro}
\begin{tikzcd}
1
\ar[r]
&
\Aut_{\otimes}(\cF^{\scriptscriptstyle Z})
\ar[r]
&
\Aut_{\otimes}(I\circ \cF^{\scriptscriptstyle Z})
\ar[r]
&
G
\ar[r]
&
1
\end{tikzcd}
\end{equation}
where $I: Z(\cC) \to Z(\cC)^G$ is the induction functor adjoint to the forgetful functor $\Forget_G$.\footnote{Observe that while $I$ is only oplax monoidal as an adjoint of a monoidal functor, it still makes sense to talk about the (oplax) monoidal automorphisms $\Aut_\otimes(I\circ \cF^\sZ)$.}
Moreover, splittings of this exact sequence are in canonical bijection with lifts
$\widetilde{\urho}: \uG \to \uAut(Z(\cC)|\cF^{\scriptscriptstyle Z})$
as in the final case of Theorem \ref{thm:FirstMainTheorem}.
\end{thm}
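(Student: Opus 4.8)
The plan is to realise both nontrivial maps in \eqref{eq:SES intro} directly from the induction functor $I$, to identify $\Aut_{\otimes}(I\circ \cF^{\sZ})$ with a $G$-graded (``crossed product'') group built from the isomorphisms $\cF^{\sZ}\cong g\circ\cF^{\sZ}$, and then to read off both exactness and the splitting correspondence from this grading. First I would record the relevant grading. Since $I$ is two-sided adjoint to $\Forget_G$ and $\Forget_G(I(z))\cong\bigoplus_{g\in G}g(z)$, the adjunction gives a natural identification
\[
\End_{Z(\cC)^G}\!\bigl(I(\cF^{\sZ}(v))\bigr)\;\cong\;\bigoplus_{g\in G}\Hom_{Z(\cC)}\bigl(\cF^{\sZ}(v),\,g(\cF^{\sZ}(v))\bigr),
\]
and composition makes the right-hand side a $G$-graded algebra, with degree-$g$ piece $\Hom(\cF^{\sZ}(v),g\cF^{\sZ}(v))$ and the product of a degree-$g$ and a degree-$h$ element landing in degree $gh$ via the action of $g$. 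An oplax monoidal natural automorphism $\eta$ of $I\circ\cF^{\sZ}$ is then a family $\eta_v$ of invertible elements of these graded algebras, natural in $v$ and compatible with the oplax tensor structure.

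Next I would define the two maps. The inclusion $\Aut_{\otimes}(\cF^{\sZ})\hookrightarrow \Aut_{\otimes}(I\circ\cF^{\sZ})$ is post-whiskering by $I$: applying the oplax monoidal functor $I$ to a monoidal natural automorphism $\alpha$ of $\cF^{\sZ}$ produces $I(\alpha)$, which lives in the degree-$e$ pieces $\End_{Z(\cC)}(\cF^{\sZ}(v))$; this is injective since its degree-$e$ component recovers $\alpha$. The map to $G$ is the \emph{degree}: granting that each $\eta$ is homogeneous of a single degree $\partial(\eta)=g$ independent of $v$, the graded multiplication makes $\partial$ a homomorphism, its kernel is exactly the degree-$e$ part, i.e.\ the image of $\Aut_{\otimes}(\cF^{\sZ})$, and surjectivity is where the standing hypothesis enters: for each $g$ a witnessing monoidal natural isomorphism $\cF^{\sZ}\cong g\circ\cF^{\sZ}$ (the vanishing of the first obstruction) is a degree-$g$ element, which one promotes to an oplax monoidal automorphism of $I\circ\cF^{\sZ}$ using the induced monoidal structure. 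This establishes \eqref{eq:SES intro}.

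The hard part will be homogeneity: that every monoidal $\eta$ is supported in a single degree. Here I would adapt the connectedness argument of \cite{MR3933137}. Evaluating at the tensor unit, $\cF^{\sZ}(1)=1$ and $g(1)\cong 1$ force $\End_{Z(\cC)^G}(I(1))\cong\bbC[G]$, and the unit and tensor axioms for a monoidal natural automorphism force $\eta_1$ to be grouplike, hence a single $g\in G$; naturality together with compatibility with the oplax tensor constraints then propagates this fixed degree from the unit to every $\cF^{\sZ}(v)$, using that $\cV$ is fusion (semisimple with simple unit, so every object is linked to $\mathbf 1$ through the tensor structure). This is the step that genuinely uses the monoidal, as opposed to merely natural, structure of $\eta$.

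Finally, for the splitting correspondence, a set-theoretic splitting $s$ of \eqref{eq:SES intro} selects for each $g$ a degree-$g$ element $s(g)$, equivalently a monoidal natural isomorphism $\tau_g:\cF^{\sZ}\cong g\circ\cF^{\sZ}$; the requirement that $s$ be a group homomorphism translates, under the graded multiplication, into precisely the coherence condition assembling $\{\tau_g\}_{g\in G}$ into a monoidal $2$-functor lift $\widetilde{\urho}:\uG\to\uAut(Z(\cC)|\cF^{\sZ})$ of $\urho$. I would check the two assignments are mutually inverse and then phrase the resulting bijection within the $0$-truncation and homotopy-fiber formalism of \S\ref{sec:TruncationOfHomotopyFibers}, so that ``splittings'' and ``lifts up to homotopy'' are matched as $0$-truncated sets, exactly as in the final clause of Theorem \ref{thm:FirstMainTheorem}.
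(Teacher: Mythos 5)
Your proposal is correct and is essentially the paper's own proof: the paper likewise reduces an automorphism $\eta\in\Aut_\otimes(I\circ \cF^\sZ)$ to its components $\eta^v_{g,e}\in\Hom(\cF^\sZ(v)\to g(\cF^\sZ(v)))$ (your graded-algebra identification, obtained there from $G$-equivariance of $\eta^v$), establishes single-degree support first at the unit via the coalgebra structure on $(I\circ\cF^\sZ)(1)$ (your grouplike argument, run there under the weaker hypothesis that $\cF(1_\cV)$ is a connected coalgebra), propagates it to all $v$ using the oplax compatibility square for $1_\cV\otimes v$ and the left-invertibility of $\psi^g\circ g(\varphi_{1_\cV,v})$, defines the surjection by the degree (with a $g\mapsto g^{-1}$ twist to make it a homomorphism in their composition convention), gets surjectivity from the standing vanishing of the first obstruction, and matches splittings with lifts exactly as you describe. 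The only deviations are cosmetic: your appeal to semisimplicity of $\cV$ in the propagation step is unnecessary (only the unitor and the oplax unit constraint are used, which is why the paper's argument works for arbitrary linear monoidal $\cV$ and oplax $\cF$), and the degree map requires the inverse convention just noted, but neither affects correctness.
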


We call the exact sequence \eqref{eq:SES intro} the \emph{second obstruction} to the equivariant
functor lifting problem. We say the second obstruction \emph{vanishes} when this short exact
sequence splits, and a splitting is a \emph{witness} of the vanishing of the second obstruction. In
\S\ref{sec:Examples}, we calculate the splittings of \eqref{eq:SES intro} for various examples.

In \S\ref{sec:GCrossedBraidings}, we give an application of our two main theorems above to extend
Nikshych's classification \cite{MR3943750} of braidings on a fixed fusion category, classifying
$G$-crossed braidings on a fixed $G$-graded fusion category in Theorem \ref{Gcrossedthm}. The main
tool is the following theorem, which extends \cite[Prop.~2.4]{1803.04949} in the unitary setting.

\begin{thm}
\label{thm:ClassificationOfGCrossedBraidings}
Let $\cV$ be a braided fusion category, and $\cC$ a $G$-graded extension of $\cV$ as fusion
categories. The set of extensions of the self enrichment $\cV \to Z(\cV)$ to $Z(\cC)$ characterized
in Theorem \ref{thm:FirstMainTheorem} is in bijective correspondence with equivalence classes of $G$-crossed braidings on
$\cC$.
\end{thm}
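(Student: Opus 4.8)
The plan is to realize both sides of the asserted bijection as homotopy classes of lifts of one and the same monoidal $2$-functor $\uupi:\uuG\to\uuBrPic(\cV)$ --- the one classifying the fixed $G$-extension $\cC$ of $\cV$ --- and then to identify the two resulting lifting problems by a single structural equivalence of $2$-groupoids. Throughout I specialize Theorem \ref{thm:FirstMainTheorem} to base category $\cV$ equipped with its canonical self-enrichment $\cF^\sZ:\cV\to Z(\cV)$ and extension $\cD=\cC$, so that ``set $1$'' (extensions of the self-enrichment to $Z(\cC)$) is, by the first two bullets of that theorem, identified with $\tau_0$ of the homotopy fiber over $\uupi$ of the forgetful $2$-functor $\uuBrPic^\cV(\cV)\to\uuBrPic(\cV)$.

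First I would recall the extension-theoretic classification of braided $G$-crossed structures: by the braided analogue of the extension theory of \cite{MR2677836}, braided $G$-crossed extensions of $\cV$ are classified by monoidal $2$-functors $\uuG\to\uuPic(\cV)$ into the Picard $2$-groupoid of $\cV$, where the forgetful $2$-functor $\uuPic(\cV)\to\uuBrPic(\cV)$ sends an invertible $\cV$-module category to the invertible $\cV$-bimodule category built from it via the braiding, thereby recording the underlying ordinary $G$-extension. Since $\cC$ is fixed, the homotopy class of $\uupi$ is fixed, so equivalence classes of $G$-crossed braidings on $\cC$ extending the braiding of $\cV=\cC_e$ are exactly $\tau_0$ of the homotopy fiber over $\uupi$ of $\uuPic(\cV)\to\uuBrPic(\cV)$. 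Here I must verify that the equivalence relation on $G$-crossed braidings (braided $G$-crossed equivalences inducing the identity on the underlying $G$-graded fusion category, up to the usual monoidal natural isomorphisms) matches homotopy of lifts; this is precisely the $\tau_0$/strict-fiber bookkeeping of \S\ref{sec:TruncationOfHomotopyFibers}, used exactly as in the paper's other bijections.

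The crux is then a single identification: the self-enriched Brauer--Picard $2$-groupoid of $\cV$ is its Picard $2$-groupoid, i.e.\ there is an equivalence $\uuPic(\cV)\simeq\uuBrPic^\cV(\cV)$ commuting up to coherent equivalence with the two forgetful $2$-functors to $\uuBrPic(\cV)$. On objects, unwinding Definition \ref{defn:VBrauerPicardGroupoid} for the self-enrichment, an object amounts to an invertible $\cV$-bimodule category whose two $\cV$-actions are intertwined by the half-braidings packaged in $\cF^\sZ$ --- namely the braiding of $\cV$ --- so the enriched bimodule structure collapses to a single left $\cV$-module structure; conversely the braiding promotes any invertible $\cV$-module category to an invertible $\cV$-enriched bimodule category. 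I would then check this dictionary is compatible with the relative tensor products $\boxtimes_\cV$ (hence monoidal) and extend it to $1$- and $2$-morphisms. This is the main obstacle, since it requires matching the enriched-bimodule coherence data of Definition \ref{defn:VBrauerPicardGroupoid} with the module coherence data defining $\uuPic(\cV)$; once the object-level correspondence is pinned down the higher cells are routine but lengthy.

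Finally, an equivalence of forgetful $2$-functors over a common base induces an equivalence of homotopy fibers over each point, hence a bijection on $\tau_0$. Applying this over $\uupi$ and composing with the two identifications above yields the chain $\{\text{extensions of the self-enrichment to }Z(\cC)\}\cong\tau_0\hoFib_{\uupi}\big(\uuBrPic^\cV(\cV)\to\uuBrPic(\cV)\big)\cong\tau_0\hoFib_{\uupi}\big(\uuPic(\cV)\to\uuBrPic(\cV)\big)\cong\{\text{equivalence classes of }G\text{-crossed braidings on }\cC\}$, which is the asserted bijection. As a sanity check, specializing to $G$ trivial recovers the $\uuPic(\cV)\simeq\uuBrPic^\cV(\cV)$ statement as the classification of braidings on $\cV$, consistent with Nikshych's theorem \cite{MR3943750}.
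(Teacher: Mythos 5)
Your proposal is correct and takes essentially the same route as the paper's proof (Theorem \ref{thm:ClassificationOfGCrossedBraidingsUsingBrPicC}): extensions of the self-enrichment are identified with lifts of $\uupi$ through $\Forget_\cV:\uuBrPic^{\cV}(\cV)\to\uuBrPic(\cV)$ via Theorem \ref{thm:LiftsToEnrichedBrPic}, equivalence classes of $G$-crossed braidings with lifts through $\uuPic(\cV)\to\uuBrPic(\cV)$ via the classification $\Hom(\uuG\to\uuPic(\cV))\cong\Ext_{\CrsBrd}(G,\cV)$, and the two lifting problems are matched by precisely the paper's monoidal $2$-equivalence $\uuPic(\cV)\simeq\uuBrPic^{\cV}(\cV)$ over $\uuBrPic(\cV)$ (the triangle \eqref{eq:PicFactorsThroughBrPicCC}), with the same $\tau_0$/strict-fiber bookkeeping of \S\ref{sec:TruncationOfHomotopyFibers}. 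Even your deferred ``crux'' verification is handled in the paper at the same level of detail, by observing that both monoidal structures are lifted from $\uuBrPic(\cV)$ via the universal property of $\boxtimes_\cV$, with the inverse given by forgetting the right action.
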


We then describe the equivalence classes of $G$-crossed braidings on group theoretical $G$-graded fusion categories,
e.g.,  $\Vec(H, \omega)$ and $\Rep(H)$ for appropriate groups $H$, in terms of group theoretical
data.

\subsection*{Acknowledgements}

This project started at the 2018 BIRS workshop on Fusion Categories and Subfactors.
The authors would like to thank Theo Johnson-Freyd, Dmitri Nikshych, Noah Snyder, and Kevin Walker for helpful conversations.
Corey Jones was supported by NSF DMS grant 1901082/2100531,
Scott Morrison was supported by a Discovery Project 
`Low dimensional categories' DP160103479, and a Future Fellowship `Quantum symmetries' FT170100019
from the Australian Research Council,
David Penneys was supported by NSF DMS grant 1654159, and
Julia Plavnik was supported by NSF DMS grants 1802503 and 1917319.

\textbf{}\textbf{}\section{Background}

In this article, we assume the reader is familiar with tensor categories, in particular the book
\cite{MR3242743}. We typically use their conventions. For example, the Drinfeld center of a tensor
category $\cC$ has objects $(c,\sigma_{\bullet,c})$ where $c\in\cC$ and
$\sigma_{\bullet,c}=\{\sigma_{a,c}: a\otimes c \to c\otimes a\}_{a\in \cC}$ is a family of
half-braidings. In this convention, the braiding on $Z(\cC)$ is given by
$\beta_{(c,\sigma_{\bullet,c}),(d,\tau_{\bullet,d})}:= \tau_{c,d}: c\otimes d\to d\otimes c$.
When $\cC$ is a monoidal subcategory of a monoidal category $\cD$, we use the notation $Z_\cC(\cD)$
for the relative Drinfeld center. This agrees with the  notation of \cite{MR2587410}, but is the
reverse of the notation of \cite{MR3663592}.

\subsection{Braided enriched monoidal categories}
\label{sec:BraidedEnrichedMonoidalCategories}

Recall from \cite{MR2177301} that given a monoidal category $\cV$, a $\cV$-category $\cC$ has
objects together with hom objects $\cC(a\to b)\in \cV$ for all $a,b\in\cC$. For every $a,b,c\in
\cC$, we have a composition morphism $-\circ_\cC-\in \cV(\cC(a\to b)\cC(b\to c)\to \cC(a\to c))$
which satisfies an associativity axiom. For every $a\in \cC$, we have an identity element
$j_a \in \cV(1_\cV \to \cC(a\to a))$ which satisfies a unitality axiom.

There are also notions of $\cV$-functors and ($1_\cV$-graded) $\cV$-natural transformations.
We refer the reader to \cite{MR2177301} for more details.
(See also the pedestrian exposition in \cite[\S2]{MR3961709} or \cite[\S2]{1809.09782}.)

\begin{defn}
Given a $\cV$-category $\cC$, the \emph{underlying category} $\cC^\cV$ has the same objects as
$\cC$, and the hom-sets are given by $\cC^\cV(a\to b):= \cV(1_\cV \to \cC(a\to b))$. We leave the
reader to work out the definitions of composition and identity morphisms for $\cC^\cV$.
\end{defn}

\begin{defn}[\cite{MR649797,1809.09782}]
A $\cV$-category $\cC$ is called \emph{weakly tensored} if
every representable functor $\cC(a\to -): \cC^\cV \to \cV$ admits a left adjoint.

When $\cV$ is closed, we can form the self-enrichment $\widehat{\cV}$ of $\cV$ over itself
\cite[\S1.6]{MR2177301}. 
In this case, the representable functor $\cC(a\to -): \cC^\cV \to \cV$ can be promoted to a $\cV$-functor $\widehat{\cC}(a\to -):\cC \to \cV$.
A $\cV$-category $\cC$ is called \emph{tensored} if every
$\cV$-representable functor $\widehat{\cC}(a\to -): \cC \to \widehat{\cV}$ admits a left $\cV$-adjoint.
\end{defn}

We now assume $\cV$ is a braided monoidal category.

\begin{defn}
A (strict) $\cV$-\emph{monoidal category} is a $\cV$-category $\cC$ equipped with an associative
monoid structure on objects, denoted $ab$ for $a,b\in \cC$, whose unit object is denoted by $1_\cC$,
together with a tensor product morphism $-\otimes_\cC- \in \cV(\cC(a\to c)\cC(b\to d)\to \cC(ab\to
cd))$ for all $a,b,c,d\in \cC$ satisfying strict associativity and unitality axioms. The tensor
product and composition morphisms must further satisfy the \emph{braided interchange} relation
\begin{equation*}
\label{eq:BraidedInterchance}
\begin{tikzcd}
\cC(a\to b) \cC(d\to e)\cC(b\to c) \cC(e\to f)
\ar[rr, "(-\otimes_\cC-)(-\otimes_\cC-)"]
\ar[dd, "\id\beta_{\cC(d\to e),\cC(b\to c)}\id"]
&&
\cC(ad\to be)\cC(be\to cf)
\ar[d, "-\circ_\cC-"]
\\
&&
\cC(ad\to cf)
\\
\cC(a\to b) \cC(b\to c) \cC(d\to e)\cC(e\to f)
\ar[rr, "(-\circ_\cC-)(-\circ_\cC-)"]
&&
\cC(a\to c) \cC(d\to f)
\ar{u}[swap]{-\otimes_\cC-}
\end{tikzcd}
\end{equation*}
There are also notions of $\cV$-monoidal functors and ($1_\cV$-graded) $\cV$-monoidal natural
transformations. We refer the reader to \cite[\S2]{MR3961709} or \cite[\S6.1]{1809.09782} for more
details.
\end{defn}

\begin{defn}
A $\cV$-monoidal category $\cC$ is called 
\emph{rigid}
if its underlying monoidal category is rigid.
\end{defn}

\begin{remark}
When a $\cV$-monoidal category $\cC$ is rigid, $\cC$ is weakly tensored if and only if
$\cC(1_\cC\to -): \cC^\cV \to \cV$ admits a left adjoint \cite[Lem.~6.8]{1809.09782}. 
When in
addition $\cV$ is rigid (which implies $\cV$ is closed),
$\cC$ is tensored if and only if the $\cV$-functor $\widehat{\cC}(1_\cC \to -)$
admits a left $\cV$-adjoint \cite[Cor.~7.3]{1809.09782}.
\end{remark}

In \cite{MR3961709}, we proved a classification theorem for (weakly) tensored rigid $\cV$-monoidal
categories in terms of $\cV$-module tensor categories \cite{MR3578212}. 
The tensored case was
treated in \cite{1809.09782}.
We now restrict our focus to the tensored case for ease of exposition, as all our examples in this article are tensored.

\begin{defn}
\label{defn:VModuleTensorCategories}
A $\cV$-\emph{module tensor category} consists of a pair $(\cT,
\cF^{\scriptscriptstyle Z})$ with $\cT$ a monoidal category and $\cF^{\scriptscriptstyle Z}: \cV \to
Z(\cT)$ a braided strongly monoidal functor. 
We call a $\cV$-module tensor category:
\begin{itemize}
\item
\emph{rigid} if $\cT$ is rigid, and
\item
\emph{tensored} if the strong monoidal functor $\cF:=\Forget_Z\circ \cF^{\scriptscriptstyle Z}$ admits a right adjoint.
\end{itemize}
Based on \cite[Def.~3.2]{1607.06041}, 
a 1-morphism $(\cT_1, \cF_1^\sZ)\to (\cT_2,\cF^\sZ_2)$ of $\cV$-module tensor categories consists of a pair $(G,\gamma)$ where $G:\cT_1\to \cT_2$ is a strong monoidal functor and $\gamma: \cF_2 \Rightarrow G\circ \cF_1$ is an \emph{action coherence} monoidal natural isomorphism which satisfies the following compatibility with half-braidings:
\begin{equation}
\label{eq:CoherenceForVModuleTensorCategoryHalfBraidings}
\begin{tikzcd}
G(t)\otimes \cF_2(v)
\arrow[r,"\id\otimes\gamma_v"]
\arrow[d,"\sigma_{G(t),\cF_2(v)}"]
&
G(t)\otimes G(\cF_1(v))
\arrow[r,"\cong"]
&
G(t\otimes \cF_1(v))
\arrow[d,"G(\sigma_{t,\cF_2(v)})"]
\\
\cF_2(v)\otimes G(t)
\arrow[r,"\gamma_v\otimes \id"]
&
G(\cF_1(v))\otimes G(t)
\arrow[r,"\cong"]
&
G(\cF_1(v)\otimes t)
\end{tikzcd}
.
\end{equation}
Based on \cite[Def.~3.3]{1607.06041}, a 2-morphism $\kappa: (G,\gamma) \Rightarrow (H,\eta)$ between 1-morphisms $(\cT_1, \cF_1^\sZ)\to (\cT_2,\cF^\sZ_2)$ is 
a monoidal natural transformation $\kappa: G\Rightarrow H$ such that for all $v\in \cV$, the following diagram commutes:
\begin{equation}
\label{eq:CoherenceForVModuleTensorCategory2Morphisms}
\begin{tikzcd}
\cF_2(v)
\arrow[rr,"\gamma_v"]
\arrow[dr,"\eta_v"]
&&
G(\cF_1(v))
\arrow[dl,"\kappa_{\cF_1(v)}"]
\\
&H(\cF_1(v)).
\end{tikzcd}
\end{equation}
We call an invertible 1-morphism between $\cV$-module tensor categories an equivalence.
\end{defn}

We have the following classification theorem, which has recently been extended to a 2-equivalence of
2-categories (pseudofunctor equivalence of bicategories) in \cite{Dell19}.

\begin{thm}[\cite{MR3961709,1809.09782}]
\label{thm:VMonoidalCategoreyClassification}
Let $\cV$ be a braided monoidal category.
There is a bijective correspondence between equivalence classes
\[
\left\{\,
\parbox{4.7cm}{\rm Tensored rigid $\cV$-monoidal categories $\cC$}\,\right\}
\,\,\cong\,\,
\left\{\,\parbox{6cm}{\rm Tensored rigid $\cV$-module tensor categories $(\cT,\cF^{\scriptscriptstyle Z})$}\,\right\}.
\]
\end{thm}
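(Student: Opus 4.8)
The plan is to exhibit a pair of mutually inverse constructions passing between the two kinds of structure, arranged so that the braiding of $\cV$ and the central half-braiding data of $\cF^\sZ$ correspond to one another. The content of the theorem is the assertion that the single braided-interchange coherence on the enriched side is \emph{exactly} the data of a lift to the Drinfeld center, so the real work lies in translating one coherence into the other; everything else is the standard dictionary between tensored enriched categories and module categories.

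Starting from a tensored rigid $\cV$-module tensor category $(\cT,\cF^\sZ)$, write $\cF=\Forget_Z\circ\cF^\sZ$ and let $R$ be its right adjoint, which exists by the tensored hypothesis and is lax monoidal since $\cF$ is strong monoidal. I would build a $\cV$-category $\cC$ with the same objects as $\cT$ and hom objects $\cC(a\to b):=R(b\otimes a^\vee)$, where $a^\vee$ is a dual in the rigid category $\cT$. The defining adjunction together with rigidity yields $\cV(1_\cV\to\cC(a\to b))\cong\cT(a,b)$, so the underlying category $\cC^\cV$ is canonically identified with $\cT$, and the monoid structure on objects is inherited from $\otimes_\cT$. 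The composition morphisms come from the internal-hom structure of the $\cV$-module category $\cT$ (which needs only that $\cF$ is monoidal), while the tensor-product morphisms $-\otimes_\cC-$ and the braided-interchange relation are assembled from, and controlled by, the half-braidings $\sigma_{-,\cF(v)}$ packaged in $\cF^\sZ$ together with the braiding $\beta$ of $\cV$: these are precisely what is needed to commute the central objects $\cF(v)$ past arbitrary objects of $\cT$ when rearranging tensor factors. One then checks that $\cC$ is tensored and rigid, using the remark that rigidity of $\cC$ is inherited from that of its underlying category $\cT$.

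Conversely, starting from a tensored rigid $\cV$-monoidal category $\cC$, I would set $\cT:=\cC^\cV$ with its inherited monoidal structure, and take $\cF:\cV\to\cT$ to be the underlying ordinary functor of the left $\cV$-adjoint of $\widehat{\cC}(1_\cC\to-)$, which exists by the tensored hypothesis; its monoidal structure is produced from $-\otimes_\cC-$ and strong unitality, and its right adjoint $R$ is the underlying functor of $\widehat{\cC}(1_\cC\to-)$, so that $\cC(1_\cC\to t)\cong R(t)$. The half-braiding promoting $\cF$ to $\cF^\sZ:\cV\to Z(\cT)$ is extracted from the enriched tensor-product morphisms, and the braided-interchange relation is exactly the input guaranteeing (i) naturality and the hexagon axioms for this half-braiding, so that $\cF^\sZ$ genuinely lands in $Z(\cT)$, and (ii) that $\cF^\sZ$ is braided, i.e.\ that $\beta$ is carried to the canonical braiding of $Z(\cT)$.

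Finally I would verify that the two assignments are mutually inverse up to equivalence of $\cV$-monoidal categories on one side and equivalence of $\cV$-module tensor categories on the other, and that they therefore descend to the indicated bijection of equivalence classes. The hard part will be the dictionary between the braided-interchange square and the axioms of the Drinfeld center: one must show that the single coherence in $\cV$ involving $\beta_{\cC(d\to e),\cC(b\to c)}$ translates, after passing to underlying categories and taking adjoints, into the full package of half-braiding naturality, the two hexagon identities, and braidedness of $\cF^\sZ$ — and, running in reverse, that these conditions reassemble into the interchange relation with nothing lost. This equivalence of coherences is most transparently established by a string-diagram computation in which the braiding strand of $\cV$ and the half-braiding strand of $Z(\cT)$ are matched crossing-for-crossing; once it is in place, the associativity, unitality, rigidity-transport, and well-definedness-on-equivalence-classes checks are routine bookkeeping.
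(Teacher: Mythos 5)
Your proposal is correct and follows essentially the same route as the proof in the cited sources \cite{MR3961709,1809.09782} (this paper itself imports the theorem without proof): hom objects $\cC(a\to b)\cong R(b\otimes a^\vee)$ via the right adjoint of $\cF=\Forget_Z\circ\cF^\sZ$, the central functor recovered as the left $\cV$-adjoint of $\widehat{\cC}(1_\cC\to-)$, and the braided interchange translated crossing-for-crossing into the half-braiding axioms and braidedness of $\cF^\sZ$. In particular you correctly isolate the two points that drive the tensored case: using the $\cV$-adjoint (not merely the ordinary adjoint, which would only yield an oplax $\cF^\sZ$), and the fact that rigidity of a $\cV$-monoidal category is by definition rigidity of its underlying category.
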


In light of Theorem \ref{thm:VMonoidalCategoreyClassification}
together with the results of \cite{1905.04924,MR3763324} in the fusion setting,
 we make the following definition.

\begin{defn}
A $\cV$-\emph{fusion category}, for $\cV$ a braided fusion category, consists of a fusion category $\cC$ together with a braided strong monoidal functor $\cF^{\scriptscriptstyle Z}: \cV \to Z(\cC)$.
Observe as $\cF^{\scriptscriptstyle Z}$ is a functor between fusion categories, it automatically admits a left adjoint, and hence $\cV$-fusion categories are tensored.
\end{defn}

We focus on the fusion setting in order to have access to the results of \cite{MR2677836} and \cite{MR2587410}.

\subsection{Extension theory for fusion categories}

We rapidly review the results of \cite{MR2677836} and \cite{MR2587410} on extension theory for fusion categories.

In \cite{MR2677836}, Etingof-Nikshych-Ostrik give a recipe for constructing $G$-extensions of a
fixed fusion category $\cC$ using cohomological obstruction theory.

\begin{defn}
Recall that a \emph{categorical $n$-group} is an $(n+1)$-category with one 0-morphism such that
every k-morphism is invertible up to a (k+1)-isomorphism for $k\leq n$, and all $(n+1)$-morphisms
are invertible. 
Typically, we indicate the categorical group number by adding that number of
underlines below. 
We denote the $k<n$ \emph{truncation} obtained by inductively identifying higher
isomorphism classes by simply removing underlines.
\end{defn}

\begin{example}
Given a group $G$, we view it as a category with one object where every morphism is invertible.
We get a categorical 1-group $\uG$ by adding only identity 2-morphisms, and we get a categorical 2-group $\uuG$ by only adding identity 2-morphisms to $\uG$.
\end{example}

\begin{example}
Given a fixed fusion category $\cC$, the 2-groupoid 
$\Ext(G,\cC)$
of $G$-extensions of $\cC$ is the categorical 2-group whose
unique object is $\cC$,
whose 1-morphisms are $G$-graded fusion categories $\cD=\bigoplus_{g\in G} \cD_g$ together with a fixed monoidal equivalence $I_\cD: \cC \to \cD_e$,
whose 2-morphisms are $G$-graded monoidal equivalences $F:\cD \to \cE$ together with a monoidal natural isomorphism $\alpha : I_\cE \Rightarrow F\circ I_\cD$,
and 
whose 2-morphisms 
$(F_1,\alpha_1)\Rightarrow (F_2,\alpha_2)$
are monoidal natural equivalences
$\gamma: F_1 \Rightarrow F_2$ such that
$$
\begin{tikzcd}
\cC\arrow[rr, "I_\cD"]
\arrow[ddrr, swap, "I_\cE"]
&&
\cD\arrow[dd,"F_2"]
\arrow[dl,Leftarrow,shorten <= 1em, shorten >= 1em, "\alpha_2"]
\\
&\mbox{}&
\\
&&
\cE.
\end{tikzcd}
\qquad
=
\qquad
\begin{tikzcd}
\cC\arrow[rr, "I_\cD"]
\arrow[ddrr, swap, "I_\cE"]
&&
\cD\arrow[dd,swap,"F_1"]
\arrow[dl,Leftarrow,shorten <= 1em, shorten >= 1em, "\alpha_1"]
\arrow[dd,bend left = 90, "F_2"]
\\
&\mbox{}&
\arrow[r,Rightarrow,shorten >= 1em, "\!\!\!\!\gamma"]
&\mbox{}
\\
&&
\cE
\end{tikzcd}
\,.
$$
\end{example}

\begin{remark}
The 2-groupoid $\Ext(G,\cC)$ defined in the above example is equivalent to the one defined in \cite[Def.~8.2]{2006.08022}
where each $\cI_\cD: \cC \to \cD_e$ is $\id_\cC$ and each $\alpha: \id_\cC \Rightarrow F|_\cC\circ \id_\cC$ is the identity monoidal natural isomorphism.
\end{remark}

\begin{example}
\label{ex:BrPic}
Given a fixed fusion category $\cC$, its \emph{Brauer-Picard} groupoid
$\uuBrPic(\cC)$ is the categorical 2-group whose unique 0-morphism is $\cC$,
whose 1-morphisms are invertible $\cC-\cC$ bimodule categories,
whose 2-morphisms are $\cC-\cC$ bimodule equivalences,
and whose 3-morphisms are bimodule functor natural isomorphisms.
\end{example}

\begin{defn}
\label{defn:RelativeDeligneProduct}
In Example \ref{ex:BrPic} above, composition of $\cC-\cC$ bimodule categories is the relative Deligne tensor product.
In more detail, suppose $\cD$ is a fusion category,
$\cM_\cD$ is a right $\cD$-module category, and ${}_\cD\cN$ is a left $\cD$-module category.
The relative tensor product is a finitely semisimple category $\cM\boxtimes_\cD \cN$ together with a $\cD$-balanced functor 
$\boxtimes_\cD: \cM\boxtimes \cN \to \cM\boxtimes_\cD \cN$ satisfying the universal property 
that for every abelian category $\cP$ and any $\cD$-balanced functor $F: \cM\boxtimes \cN \to \cP$,
there exists a linear functor $\widetilde{F}: \cM \boxtimes_\cD \cN \to \cP$, unique up to unique natural isomorphism, such that the following diagram weakly commutes:
$$
\begin{tikzcd}
\cM \boxtimes \cN
\arrow[d,"\boxtimes_\cD"]
\arrow[dr,"F"]
\\
\cM\boxtimes_\cD \cN
\arrow[r,"\widetilde{F}"]
&
\cP
\end{tikzcd}
$$
When $\cM$ is a left $\cC$-module category and $\cN$ is a right $\cE$-module category, then $\cM\boxtimes_\cD \cN$ inherits the structure of a $\cC-\cE$ bimodule category.
We refer the reader to \cite[\S3]{MR2677836} for more details.
\end{defn}

The following theorem classifies $G$-extensions via monoidal 2-functors $\uuG \to \uuBrPic(\cC)$.
For the definition of a monoidal 2-functor, see \cite[Def.~2.8]{2006.08022}.

\begin{thm}[{\cite{MR2677836} and \cite[Thm.~8.5]{2006.08022}}]
\label{thm:ExtensionsAsMonoidal2Functors}
Let $\cC$ be a fusion category.
There is an equivalence of 2-groupoids
$\Ext(G,\cC) \cong \Hom(\uuG \to \uuBrPic(\cC))$.
\end{thm}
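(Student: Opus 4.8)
The plan is to exhibit mutually quasi-inverse 2-functors between the two 2-groupoids, following the architecture of \cite{MR2677836} but organized so that the cohomological obstruction data is absorbed entirely into the single notion of a monoidal 2-functor. The essential structural input, which I would cite from \cite{MR2677836}, is that for any faithfully $G$-graded fusion category $\cD=\bigoplus_{g\in G}\cD_g$ each homogeneous component $\cD_g$ is an invertible $\cC$-$\cC$ bimodule category: the bimodule structure is inherited from the tensor product of $\cD$ via the chosen identification $I_\cD:\cC\xrightarrow{\sim}\cD_e$, and invertibility follows because the tensor product induces an equivalence $\cD_g\boxtimes_\cC\cD_{g^{-1}}\xrightarrow{\sim}\cD_e$ onto the unit bimodule.

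First I would define a 2-functor $\Phi:\Ext(G,\cC)\to\Hom(\uuG\to\uuBrPic(\cC))$. On the unique object it is forced. A $G$-extension $\cD$ is sent to the assignment $g\mapsto\cD_g$; the monoidal structure of the resulting 2-functor is supplied by the bimodule equivalences $\cD_g\boxtimes_\cC\cD_h\xrightarrow{\sim}\cD_{gh}$ coming from the tensor product of $\cD$, and the required coherence 3-isomorphisms are read off from the associativity constraint of $\cD$. A graded monoidal equivalence $(F,\alpha)$ restricts on each component to a bimodule equivalence $\cD_g\to\cE_g$, yielding a monoidal 2-natural transformation, and graded monoidal natural isomorphisms are sent to modifications.

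Conversely, I would build $\Psi$ by reconstructing an extension from a monoidal 2-functor $\uupi$. Set $\cD:=\bigoplus_{g\in G}\uupi(g)$ as a finitely semisimple category, with $\uupi(e)\iso\cC$ providing the unit component. The tensor product on homogeneous objects is assembled from the monoidal-structure equivalences $\uupi(g)\boxtimes_\cC\uupi(h)\iso\uupi(gh)$, using the $\cC$-bimodule structure on each $\uupi(g)$ to make sense of the product; the associativity constraint of $\cD$ is then extracted from the coherence 3-isomorphisms of $\uupi$. That the resulting $\cD$ is a rigid fusion category follows from rigidity of invertible bimodule categories together with the finiteness and semisimplicity of each $\uupi(g)$.

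The hard part will be the coherence bookkeeping: verifying that the pentagon axiom for the reconstructed $\cD$ is \emph{equivalent} to the associativity coherence axiom for the monoidal 2-functor $\uupi$, and matching the higher data level by level so that $\Phi$ and $\Psi$ are genuinely inverse as 2-functors rather than merely on equivalence classes. Once the dictionary between the two families of coherence diagrams is in place, checking that $\Phi\circ\Psi$ and $\Psi\circ\Phi$ are 2-naturally equivalent to the identities amounts to tracking the canonical identifications $\uupi(g)=\cD_g$ through both constructions. Since the statement is quoted from \cite{MR2677836} and \cite[Thm.~8.5]{2006.08022}, I would ultimately defer the full coherence verification to those references, recording here only the translation between $G$-extensions and monoidal 2-functors sketched above.
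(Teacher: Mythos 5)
Your proposal is correct and takes essentially the same approach as the paper, which gives no proof of its own here: the theorem is quoted from \cite{MR2677836} and \cite[Thm.~8.5]{2006.08022}, and your sketch is precisely the argument of those references --- homogeneous components $\cD_g$ as invertible $\cC$-$\cC$ bimodules (with invertibility from $\cD_g\boxtimes_\cC\cD_{g^{-1}}\simeq \cD_e$), the tensor product of $\cD$ supplying the monoidal 2-functor data, and reconstruction of $\cD=\bigoplus_g \uupi(g)$ via the universal property of $\boxtimes_\cC$ with the pentagon axiom matching the 2-functor coherence. Deferring the full coherence bookkeeping to the cited references is exactly what the paper does, so no gap remains to flag.
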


The main tool of \cite{MR2677836} gives a cohomological prescription for constructing $G$-graded extensions by lifting a group homomorphism, or \emph{symmetry action}, $\rho: G\rightarrow \BrPic(\cC)$ to $\uuG \to \uuBrPic(\cC)$.
We can lift $\rho$ to a categorical action
$\underline{\rho}:\underline{G}\rightarrow \uBrPic(\cC)$
if and only if the obstruction
$o_{3}(\rho)\in H^{3}(G, \Inv(Z(\cC)))$ vanishes.
In this case, the set of equivalence classes of liftings form a torsor over $ H^{2}(G, \Inv(Z(\cC)))$.
Given $\underline{\rho}: \underline{G}\rightarrow \uBrPic(\cC)$,
there is a lift
$\uurho:\uuG\rightarrow \uuBrPic(\cC)$
if and only if the obstruction $o_{4}(\underline{\rho})\in H^{4}(G, \mathbb{C}^{\times})$ vanishes.
In this case, the equivalence classes of liftings form a torsor over $H^{3}(G, \mathbb{C}^{\times})$.

We now recall the main results of \cite{MR2587410}. Suppose we have a $G$-extension $\cD =
\bigoplus_{g\in G} \cD_g$ of $\cC$. (Note that the convention $\cC \subseteq \cD$ is opposite to the
convention of \cite{MR2587410} which uses $\cD \subseteq \cC$.) The relative center $Z_\cC(\cD)$ is
canonically a $G$-\emph{crossed braided extension} \cite[\S8.24]{MR3242743} of $Z(\cC)$ whose
$G$-equivariantization \cite[\S4.15]{MR3242743} is equivalent to $Z(\cD)$. Moreover, the canonical
equivalence $Z(\cD) \cong Z_\cC(\cD)^G$ intertwines both forgetful functors to $Z_\cC(\cD)$, and
maps $\Rep(G)' \subset Z(\cD)$ to $Z(\cC)^G$ up to a canonical monoidal natural isomorphism.
\begin{equation}
\label{eq:RelativeCenterEquivariantization}
\begin{tikzcd}
Z(\cC)^G
\ar[d, hook]
\ar[rr, leftrightarrow, "\cong"]
&&
\Rep(G)' 
\ar[d, hook]
\\
Z_\cC(\cD)^G
\ar[rr, leftrightarrow, "\cong"]
\ar[dr, swap, "\Forget_\cC"]
&
\ar[d, phantom, near start, "\,\,\,\curvearrowright"]
&
Z(\cD)
\ar[dl,"\Forget_G"]
\\
&
Z_\cC(\cD)
\end{tikzcd}
\end{equation}

\subsection{The 4-category of braided tensor categories}
\label{sec:4CatOfBraidedTensorCategories}

By \cite{MR3650080,MR3590516}, there is a 4-category of braided tensor categories
${\mathsf{BrTens}}$, and the sub-4-category $\BrFus$ of braided fusion categories is 4-dualizable by \cite[Thm.~1.19]{MR4228258}.

Following \cite{MR4228258}, we now describe the $n$-morphisms and the composition operations of the 4-category $\BrFus$.
\begin{itemize}
\item
0-morphisms are braided fusion categories.
\item
1-morphisms $\BrFus_{1}(\cA\to \cB)$ are multifusion categories $\cC$ together with a braided monoidal functor $F_\cC:\cA\boxtimes \cB^{\rev}\rightarrow Z(\cC)$ called a \emph{central structure}. 
Sometimes we denote $\cC\in \BrFus_{1}(\cA\to \cB)$ by $_{\cA} \cC_{\cB}$.

The composite of ${}_{\cA_1} \cC_{\cA_2}$ and ${}_{\cA_2} \cD {}_{\cA_3}$ is defined as follows.
First, we look at the Deligne tensor product $\cC \boxtimes \cD$, which comes equipped with a
braided monoidal functor $F: \cA_2^{\rev} \boxtimes \cA_2 \to Z(\cC\boxtimes \cD)$. We define $\cC
\boxtimes_{\cA_2} \cD$ to be  $(\cC\boxtimes \cD)_L$, the category of left $L$-modules in
$\cC\boxtimes \cD$, where $L \in \cA_2^{\rev} \boxtimes \cA_2$ is the commutative algebra obtained by taking $I(1_{\cA_2})$, where $I$ is the left adjoint to the canonical tensor product functor $\otimes: \cA^{\rev}_{2}\boxtimes \cA_{2}\rightarrow \cA_{2}$, given by $\otimes (a\boxtimes b):=a\otimes b$ and using the braiding for the tensorator. This algebra is commutative since $\otimes$ is a central functor \cite[Lemma~3.5]{MR3039775}. If $\cA_{2}$ is nondegenerate, this algebra is identified with the canonical Lagrangian algebra under the standard equivalence $\cA^{\rev}_{2}\boxtimes \cA_{2}\cong Z(\cA_{2})$. To see that $\cC \boxtimes_{\cA_2} \cD$ has the structure of a 1-morphism
in $\BrFus_1(\cA_1 \to \cA_3)$, we observe that $Z((\cC\boxtimes \cD)_L) \cong Z(\cC \boxtimes
\cD)_L^{\loc}$, the $L$-local modules in $Z(\cC \boxtimes \cD)\cong Z(\cC)\boxtimes Z(\cD)$ by
\cite[Thm.~3.20]{MR3039775}. Since $\cA_1$ centralizes $F_{\cA_2^{\rev}}(\cA_2^{\rev})\boxtimes
Z(\cD)$ and $\cA_3^{\rev}$ centralizes $Z(\cC)\boxtimes F_{\cA_2}(\cA_2)$ in $Z(\cC)\boxtimes
Z(\cD)$, we get a braided monoidal functor $\cA_1 \boxtimes \cA_3^{\rev} \to Z(\cC \boxtimes
\cD)_L^{\loc}\cong Z((\cC\boxtimes \cD)_L)$. 

An explicit example calculation of the composite
$\Ad E_8 \boxtimes_\Fib \Ad E_8'$ appears in \cite{mitchell-thesis}.

\item
2-morphisms $\BrFus_{2}(\mathcal{C}, \mathcal{D})$ are finitely semisimple $\cC-\cD$ bimdodule
categories $\mathcal{M}$ together with natural isomorphisms $\eta_{a, m}: m\triangleleft
F_{\mathcal{D}}(a) \to  F_{\cC}(a)\triangleright m$ for $a\in \cA\boxtimes \cB^{\rev}$ and $m \in
\cM$ 
called a $\cA\boxtimes \cB^{\rev}$-\emph{centered structure} 
such that the following diagrams commute (here we suppress names of arrows):
\begin{equation}
\label{eq:BimoduleFunctorEquivalence1}
\begin{tikzcd}
  & F_{\cC}(a)\triangleright (c\triangleright m )  \arrow{r} & (c\triangleright m)\triangleleft F_{\mathcal{D}}(a) \arrow{dr}  &
\\
   (F_{\cC}(a)\otimes c)\triangleright m \arrow{dr} \arrow{ur}&  & & c\triangleright (m\triangleleft F_{\mathcal{D}}(a))
\\
   & (c\otimes F_{\cC}(a))\triangleright m \arrow{r} & c \triangleright (F_{\cC}(a)\triangleright m) \arrow{ur}  &
  \end{tikzcd}
\end{equation}

\begin{equation}
\label{eq:BimoduleFunctorEquivalence2}
\begin{tikzcd}
  & F_{\cC}(a)\triangleright (m\triangleleft d )  \arrow{r} & (m\triangleleft d)\triangleleft F_{\mathcal{D}}(a) \arrow{dr}  &
\\
   (F_{\cC}(a)\triangleright m)\triangleleft d \arrow{dr} \arrow{ur}&  & & m\triangleleft (d\otimes F_{\mathcal{D}}(a))
\\
   & (m\triangleleft F_{\mathcal{D}}(a))\triangleleft d \arrow{r} & m \triangleleft (F_{\mathcal{D}}(a) \otimes d) \arrow{ur}  &
  \end{tikzcd}
\end{equation}

\begin{equation}
\label{eq:BimoduleFunctorEquivalence3}
\begin{tikzcd}
F_{\cC}(a\otimes b)\triangleright m \arrow{d} \arrow{r}  
& 
m\triangleleft F_{\mathcal{D}}(a\otimes b) \arrow{r} 
& 
m\triangleleft (F_{\mathcal{D}}(a)\otimes F_{\mathcal{D}}(b)) 
\arrow{d}  
\\
(F_{\cC}(a)\otimes F_{\cC}(b))\triangleright m  
\ar{d}
&& 
(m\triangleleft F_{\mathcal{D}}(a))\triangleleft F_{\mathcal{D}}(b) 
\arrow{d}
\\
F_{\cC}(a)\triangleright (F_{\cC}(b)\triangleright m) 
\arrow{r}  
& 
F_{\cC}(a)\triangleright(m\triangleleft F_{\mathcal{D}}(b))
\arrow{r}
&
(F_{\cC}(a)\triangleright m)\triangleleft F_{\mathcal{D}}(b)
 \end{tikzcd}
\end{equation}

The definitions of horizontal and vertical composition of 2-morphisms are given in
\cite[p.~41-42]{MR4228258}. 
For our purposes, we need to know that vertical composition is the relative Deligne tensor product ${}_\cC\cM\boxtimes_\cD \cN_\cE$ discussed earlier in Definition \ref{defn:RelativeDeligneProduct}.
As described in \cite[Def.~Prop.~3.13]{MR4228258},
when $\cC,\cD,\cE$ are equipped with central structures $F_\cC,F_\cD,F_\cE$ respectively
and
$\cM, \cN$ are equipped with $\cA\boxtimes \cB^{\rev}$-centered structures $\eta^\cN, \eta^\cM$ satisfying \eqref{eq:BimoduleFunctorEquivalence1},
\eqref{eq:BimoduleFunctorEquivalence2},
\eqref{eq:BimoduleFunctorEquivalence3},
the $\cC-\cE$ bimodule category $\cM\boxtimes_\cD \cN$ is equipped with the $\cA\boxtimes \cB^{\rev}$-centered structure
\begin{equation}
\label{eq:VerticalCompositionEta}
m \boxtimes_\cD (n \vartriangleleft F_\cE(a))
\cong
m \boxtimes_\cD
(F_\cD(a) \vartriangleright n)
\cong
(m \vartriangleleft F_\cD(a))
\boxtimes_\cD n
\cong
(F_\cC(a) \vartriangleright m)
\boxtimes_\cD n.
\end{equation}

\item
Let $\mathcal{M}$ and $\mathcal{N}$ be two 2-morphisms with source $\cC$ and target $\mathcal{D}$.
Then a 3-morphism is a bimodule functor $G: \mathcal{M}\rightarrow \mathcal{N}$ such that the
following diagram commutes:

\begin{equation}
\label{eq:BimoduleFunctorProperty}
\begin{tikzcd}
    G(m\triangleleft F_{\mathcal{D}}(a)) \arrow{d}\arrow{r}{G(\eta_{a,m})}
    &
    G(F_{\cC}(a)\triangleright m)  \arrow{d}
\\
    G(m)\triangleleft F_{\mathcal{D}}(a)\arrow{r}{\eta_{a,G(m)}}
    &
    F_{\mathcal{C}}(a)\triangleright G(m)
 \end{tikzcd}
\end{equation}

 \item
 $4$ morphisms are bimodule natural transormations with no extra compatibility required!

\end{itemize}

\begin{remark}
Observe that we may consider a fusion category $\cC \in \BrFus_1(\Vec\to \Vec)$ where we suppress the obvious braided central functor $\cF^\sZ: \Vec \to Z(\cC)$.
Then $\uuBrPic(\cC)$ is exactly the core (consisting of only the invertible morphisms) of the endomorphism 3-category $\End^{123}(\cC)$ which has
\begin{itemize}
\item
a single 0-morphism $\cC$
\item
1-morphisms $\BrFus_2(\cC\to \cC)$
\item
2-morphisms the 3-morphisms in $\BrFus$,
and
\item
3-morphisms the 4-morphisms in $\BrFus$.
\end{itemize}
\end{remark}

\begin{remark}
Observe that given a $\cV \in \BrFus$, a 1-morphism $(\cC, \cF^{\sZ}) \in \BrFus_1(\cV \to \Vec)$ is exactly a $\cV$-fusion category.
\end{remark}

Recall that 
non-degenerate braided fusion categories $\cA,\cB$
are said to be \emph{Witt equivalent} \cite[Def.~5.1 and Rem.~5.2]{MR3039775} if there exist multifusion categories $\cC,\cD$ such that
$\cA \boxtimes Z(\cC) \cong \cB\boxtimes Z(\cD)$.
We conclude this section with the following observation.

\begin{thm}
\label{thm:WittEquivalence}
Suppose $\cA, \cB$ are non-degenerate braided fusion categories and $\cC\in \BrFus_1(\cA \to \cB)$.
The following statements are equivalent.
\begin{enumerate}
\item
$\cC$ is an invertible 1-morphism in $\BrFus$.
\item
$F_\cC : \cA \boxtimes \cB^{\rev} \to Z(\cC)$ is a braided equivalence.
\end{enumerate}
\end{thm}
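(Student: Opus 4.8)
The plan is to prove both implications by combining a single structural lemma about central functors out of nondegenerate categories with a Frobenius--Perron dimension count, and then to handle invertibility by an explicit construction. First I would record two facts. Since $\cA$ is nondegenerate, the identity $1$-morphism $1_\cA \in \BrFus_1(\cA\to\cA)$ is $\cA$ itself with central structure the canonical braided equivalence $\cA\boxtimes\cA^{\rev}\cong Z(\cA)$, so invertibility of $\cC$ means the existence of some $\cD\in\BrFus_1(\cB\to\cA)$ with $2$-isomorphisms $\cC\boxtimes_\cB\cD\cong 1_\cA$ and $\cD\boxtimes_\cA\cC\cong 1_\cB$. The key lemma I would isolate and prove (or cite from \cite{MR3039775}) is that \emph{any braided tensor functor out of a nondegenerate braided fusion category is fully faithful}: if the source $\cK$ is nondegenerate, then the image is a braided fusion subcategory and the dominant functor $\cK\to\mathrm{im}(F)$ corresponds to a connected étale algebra $A=R(1)$ whose module category is braided, forcing $A$ into the trivial Müger center and hence $A\cong 1_\cK$. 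Applied to $\cK=\cA\boxtimes\cB^{\rev}$, which is nondegenerate because $\cA$ and $\cB$ are, this shows $F_\cC$ is \emph{automatically} fully faithful for every $\cC\in\BrFus_1(\cA\to\cB)$, so the only remaining content is surjectivity, i.e. equality of dimensions $\FPdim(\cA)\FPdim(\cB)=\FPdim(Z(\cC))=\FPdim(\cC)^2$.

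For $(1)\Rightarrow(2)$ I would run a squeezing argument. Full faithfulness of $F_\cC$ and of $F_\cD$ gives the two inequalities $\FPdim(\cC)^2\ge \FPdim(\cA)\FPdim(\cB)$ and $\FPdim(\cD)^2\ge \FPdim(\cA)\FPdim(\cB)$, since each central functor embeds a fusion category of dimension $\FPdim(\cA)\FPdim(\cB)$ into the corresponding center. On the other hand, reading off the Frobenius--Perron dimension of the composite $\cC\boxtimes_\cB\cD=(\cC\boxtimes\cD)_L$ from $\FPdim((\cE)_L)=\FPdim(\cE)/\FPdim(L)$ together with $\FPdim(L)=\FPdim(\cB)$ and the $2$-isomorphism $\cC\boxtimes_\cB\cD\cong 1_\cA$ yields the equality $\FPdim(\cC)\FPdim(\cD)=\FPdim(\cA)\FPdim(\cB)$. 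Multiplying the two inequalities and comparing with the square of this equality forces both inequalities to be equalities; in particular $\FPdim(\cC)^2=\FPdim(\cA)\FPdim(\cB)=\FPdim(Z(\cC))$, so the fully faithful $F_\cC$ is also dimension-surjective and therefore a braided equivalence, which is $(2)$.

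For $(2)\Rightarrow(1)$ I would exhibit an explicit inverse. Take $\overline{\cC}\in\BrFus_1(\cB\to\cA)$ to be the monoidal opposite of $\cC$, with central structure $\cB\boxtimes\cA^{\rev}\cong(\cA\boxtimes\cB^{\rev})^{\rev}\xrightarrow{F_\cC^{\rev}}Z(\cC)^{\rev}\cong Z(\cC^{\mathrm{mop}})$, which is again a braided equivalence. Using the center formula $Z((\cC\boxtimes\cC^{\mathrm{mop}})_L)\cong Z(\cC\boxtimes\cC^{\mathrm{mop}})^{\loc}_L$ and the identifications $Z(\cC)\boxtimes Z(\cC^{\mathrm{mop}})\cong \cA\boxtimes\cB^{\rev}\boxtimes\cB\boxtimes\cA^{\rev}$ afforded by the hypothesis, the canonical Lagrangian algebra $L$ of $\cB$ is supported on the two middle tensor factors, so taking local modules collapses $(\cB^{\rev}\boxtimes\cB)^{\loc}_L\cong\Vec$ and leaves $Z(\cC\boxtimes_\cB\overline{\cC})\cong\cA\boxtimes\cA^{\rev}\cong Z(\cA)$, while the dimension count gives $\FPdim(\cC\boxtimes_\cB\overline{\cC})=\FPdim(\cC)^2/\FPdim(\cB)=\FPdim(\cA)$. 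To upgrade this to a $2$-isomorphism $\cC\boxtimes_\cB\overline{\cC}\cong 1_\cA$ of $1$-morphisms, I would produce the canonical tensor functor $\cA\to\cC\boxtimes_\cB\overline{\cC}$ coming from the central structure of the composite, check it is fully faithful, and conclude it is an equivalence from the matching dimensions and compatible with the central structures. The symmetric computation gives $\overline{\cC}\boxtimes_\cA\cC\cong 1_\cB$, so $\cC$ is invertible.

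The main obstacle is the categorical step in $(2)\Rightarrow(1)$: passing from ``the centers and Frobenius--Perron dimensions of $\cC\boxtimes_\cB\overline{\cC}$ agree with those of $\cA$'' to an honest equivalence of $1$-morphisms. Equivalent centers and equal dimensions do not pin down a fusion category, and an invertible endomorphism of $\cA$ in $\BrFus$ need not be the \emph{trivial} one (the Picard group of braided autoequivalences of $\cA$ may be nontrivial), so I must analyze the canonical comparison functor on the relative Deligne product of local modules and verify it is precisely the identity up to coherent $2$-isomorphism. This is exactly the point where the local-module technology of \cite{MR3039775} and the explicit description of composition and of $2$-morphisms in $\BrFus$ from \cite{MR4228258} must be used carefully; by contrast, the direction $(1)\Rightarrow(2)$ is entirely formal once the fully-faithfulness lemma is in hand.
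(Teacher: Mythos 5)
Your direction $(1)\Rightarrow(2)$ is correct and genuinely different from the paper's. The paper also starts from the lemma that braided functors out of nondegenerate categories are fully faithful, but then argues structurally: it writes $Z(\cC)\cong \cA\boxtimes\cD_1\boxtimes\cB^{\rev}$ and $Z(\cC^{-1})\cong\cB\boxtimes\cD_2\boxtimes\cA^{\rev}$ via M\"uger centralizers and kills $\cD_1,\cD_2$ by computing $Z(\cC\boxtimes\cC^{-1})^{\loc}_L\cong Z((\cC\boxtimes\cC^{-1})_L)\cong Z(\cA)$. Your $\FPdim$ squeeze replaces that centralizer analysis with the identities $\FPdim(\cE_L)=\FPdim(\cE)/\FPdim(L)$, $\FPdim(L)=\FPdim(\cB)$, and Morita invariance of $\FPdim$ (needed because the $2$-isomorphism $\cC\boxtimes_\cB\cD\cong 1_\cA$ is an invertible bimodule, not a monoidal equivalence, but $\FPdim$ survives this); granting those standard facts, your argument is complete and arguably more elementary.

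However, your proposed closing of $(2)\Rightarrow(1)$ has a genuine gap, and it sits exactly at the obstacle you flagged. You propose to produce a $2$-isomorphism $\cC\boxtimes_\cB\overline{\cC}\cong 1_\cA$ by showing the canonical tensor functor $\cA\to\cE:=\cC\boxtimes_\cB\overline{\cC}$ is fully faithful and hence, by the dimension count, a monoidal equivalence. This step fails in general: $\cE$ need not be monoidally equivalent to $\cA$ at all. Take $G$ nonabelian, $\cA=\cB=Z(\Vec(G))$, and $\cC=\Vec(G)\boxtimes\Vec(G)^{\mathrm{mp}}$ with its evident central structure, which is a braided equivalence, so hypothesis $(2)$ holds. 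In the composite $\cC\boxtimes_\cB\cC^{\mathrm{mp}}$ the Lagrangian algebra $L$, of Frobenius--Perron dimension $|G|^2$, lives in the two middle $\Vec(G)^{\mathrm{mp}}$ factors and collapses them to $\Vec$; the composite is therefore pointed of dimension $|G|^2$, whereas $1_\cA=Z(\Vec(G))$ is not pointed. In particular the comparison functor $\cA\to\cE$, despite equal dimensions, cannot be fully faithful (just as a fiber functor $\Rep(G)\to\Vec\subset\Vec(G)$ is a non-equivalence between equal-dimension categories). The correct mechanism --- which is what the paper's terse phrase ``and so $(\cC\boxtimes\cC^{\mathrm{mp}})_L\cong\cA$ as $1$-morphisms'' silently invokes --- is that a $2$-isomorphism in $\BrFus$ is an invertible \emph{centered bimodule}, and by \cite{MR2677836} every braided equivalence of centers is realized by an invertible bimodule; since your (correctly decorated) local-module computation shows the central structure $\cA\boxtimes\cA^{\rev}\to Z(\cE)$ is an equivalence, the induced braided equivalence $Z(\cE)\cong Z(\cA)$ is realized by an invertible $\cE$--$\cA$ bimodule whose centered structure matches the given central functors, yielding the desired $2$-isomorphism. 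So you correctly diagnosed that ``equal centers and dimensions do not pin down the category,'' but the repair is Morita-theoretic, not a monoidal equivalence, and your plan as stated would fail at the full-faithfulness check.
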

Before proving the theorem, we observe that the existence of $\cC$ as in (2) above is equivalent to
the Witt equivalence of $\cA$ and $\cB$ by \cite[Rem.~5.2 and Cor.~5.8]{MR3039775}. 
\begin{proof}
Suppose $\cC$ is an invertible 1-morphism in $\BrFus(\cA \to \cB)$. First, since $\cA$ and
$\cB^{\rev}$ are non-degenerate, every braided tensor functor out of $\cA\boxtimes \cB^{\rev}$ is
fully faithful. Hence $Z(\cC) \cong \cA  \boxtimes \cD_1 \boxtimes \cB^{\rev}$ for some
non-degenerate braided fusion category $\cD_1$. Let $\cC^{-1}\in \BrFus(\cB \to \cA)$ be an inverse
for $\cC$ such that $\cA \cong (\cC\boxtimes \cC^{-1})_L$ as 1-morphisms in $\BrFus_1(\cA \to \cA)$,
where $L\in \cB^{\rev} \boxtimes \cB$ is the canonical Lagrangian algebra. By a similar argument as
before, $Z(\cC^{-1}) \cong \cB \boxtimes \cD_2 \boxtimes \cA^{\rev}$ for some non-degenerate braided
fusion category $\cD_2$. Observe now that
$$
Z(\cC\boxtimes \cC^{-1}) \cong Z(\cC)\boxtimes Z(\cC^{-1}) 
\cong 
\cA \boxtimes \cD_1 \boxtimes \cB^{\rev} \boxtimes \cB \boxtimes \cD_2 \boxtimes \cA^{\rev}.
$$
This means that by
$$
Z(\cC \boxtimes \cC^{-1})_L^{\loc}
\cong
\cA \boxtimes \cD_1 \boxtimes \cD_2 \boxtimes \cA^{\rev}.
$$
But since $Z(\cC \boxtimes \cC^{-1})_L^{\loc} \cong Z((\cC\boxtimes \cC^{-1})_L) \cong Z(\cA)\cong \cA \boxtimes \cA^{\rev}$ as $\cA$ is non-degenerate,
we must have $\cD_1$ and $\cD_2$ are trivial, and thus $Z(\cC) \cong \cA \boxtimes \cB^{\rev}$.

Conversely, if $Z(\cC) \cong \cA \boxtimes \cB^{\rev}$, then observe that $Z(\cC^{\mathrm{mp}})\cong
\cB \boxtimes \cA^{\rev}$, where $\cC^{\mathrm{mp}}$ is the monoidal opposite of $\cC$. For the  canonical Lagrangian algebra $L\in \cB^{\rev}\boxtimes \cB$,
$$
(\cC \boxtimes \cC^{\mathrm{mp}})_L 
\cong
(\cA \boxtimes \cB^{\rev} \boxtimes \cB \boxtimes \cA^{\rev})_L
\cong
\cA \boxtimes \cA^{\rev}
\cong
Z(\cA)
$$
and so $(\cC \boxtimes \cC^{\mathrm{mp}})_L \cong \cA$ as 1-morphisms in $\BrFus_1(\cA\to \cA)$.
Similarly, we have that $(\cC^{\mathrm{mp}}\boxtimes \cC)_{L'} \cong \cB$ as 1-morphisms in
$\BrFus_1(\cB \to \cB)$, where $L'$ is the canonical Lagrangian algebra in $\cA^{\rev} \boxtimes
\cA$.
\end{proof}


\section{Truncation of homotopy fibers and classification of \texorpdfstring{$G$}{G}-gradings on a fusion category}
\label{sec:TruncationOfHomotopyFibers}

In this article,
we will often discuss various notions which are somewhat evil from a categorical perspective, such as classifying lifts of a fixed functor or $G$-gradings on a fixed fusion category.
In this section, we discuss how to make these notion not evil by using the notion of truncation of a homotopy fiber.
In the cases we care most about, we can show that the homotopy fiber of a particular (2-)functor truncates to a set, and this set is in canonical bijection with a strict `set theoretic' fiber. 

As an example, in \S\ref{sec:G-gradings} below, we classify the \emph{set} of $G$-gradings on a fixed fusion category $\cT$ in terms of fully faithful $\Rep(G)$-fibered enrichments.

\subsection{Homotopy fibers of forgetful functors}
\label{sec:HomotopyFibers}

In this section, we make sense of how various structures on a fixed monoidal category, like $G$-gradings for a fixed group $G$, or braidings, form a \emph{set}, and not a category.

Grothendieck's \emph{Homotopy Hypothesis} posits that homotopy $n$-types are equivalent to $n$-groupoids for all $n\in \bbN\cup\{\infty\}$ via the fundamental groupoid construction.
In this section, we use the term $n$-\emph{groupoid} as a synonym for \emph{homotopy $n$-type}, and \emph{weak $n$-functor} for \emph{a homotopy class of continuous maps}.

\begin{fact}
For weak $n$-categories, the homotopy hypothesis is known to hold for $n\leq 3$ \cite{MR2842929} to various degrees.
We will only use it for $n\leq 2$ for this article.
In more detail,
\begin{itemize}
\item
The strict 2-category of 
groupoids, functors, and natural transformations 
is equivalent to 
the 2-category of homotopy 1-types, continuous maps, and homotopy classes of homotopies.
\item
By \cite{MR1239560}, the homotopy category of strict 2-groupoids and strict 2-functors localized at the strict 2-equivalences is equivalent to the 1-category of homotopy 2-types and homotopy classes of continuous maps.
\item
By \cite{MR2842929}, the homotopy category of $\mathsf{Gray}$-groupoids and $\mathsf{Gray}$-functors localized at the $\mathsf{Gray}$-equivalences is equivalent to the 1-category of homotopy 3-types and homotopy classes of continuous maps.
\end{itemize}
\end{fact}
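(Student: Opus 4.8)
The plan is to assemble the three bulleted equivalences from the cited literature rather than to prove them ab initio, since the role of this Fact is to record the precise form of the Homotopy Hypothesis that we invoke, and each bullet is a substantial theorem in its own right. The unifying device in all three dimensions is the \emph{fundamental $n$-groupoid} functor $\Pi_n$, which sends a space $X$ to the $n$-groupoid whose $k$-morphisms for $k<n$ are $k$-dimensional paths/homotopies in $X$ and whose $n$-morphisms are homotopy classes of such; the comparison in the other direction is the geometric realization of the nerve. In each case the assertion is that $\Pi_n$, or rather its shadow on the appropriate homotopy category, is an equivalence onto homotopy $n$-types, and the work lies in establishing this at the level of the correctly localized homotopy category. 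For $n=1$ I would simply recall the classical fact that the nerve--fundamental-groupoid adjunction restricts to an equivalence between the category of groupoids and the homotopy category of homotopy $1$-types, refined to the $2$-categorical statement in which natural transformations of functors correspond to homotopy classes of homotopies of maps; here no localization is needed, since every weak equivalence of groupoids is already an equivalence.

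For $n=2$ I would invoke \cite{MR1239560}. The point is that strict $2$-functors between strict $2$-groupoids are too rigid to represent all maps of $2$-types, so one must first localize at the strict $2$-equivalences; after localizing, $\Pi_2$ exhibits the resulting homotopy category as equivalent to the homotopy category of $2$-types. The content to extract from \cite{MR1239560} is twofold: that every homotopy $2$-type is realized by a \emph{strict} $2$-groupoid, so that strictness costs nothing on objects, and that every map of $2$-types is represented, after inverting the strict $2$-equivalences, by a strict $2$-functor.

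For $n=3$ I would appeal to \cite{MR2842929}, which constructs a Quillen model structure on $\mathsf{Gray}$-categories whose weak equivalences are the $\mathsf{Gray}$-equivalences; identifying the fibrant--cofibrant objects and computing the homotopy category yields the homotopy category of $3$-types via $\Pi_3$. Two preliminary inputs are required. First, the coherence theorem for tricategories of Gordon--Power--Street allows one to replace an arbitrary weak $3$-groupoid by a $\mathsf{Gray}$-groupoid without changing its homotopy type, so that restricting attention to $\mathsf{Gray}$-groupoids is no loss of generality. Second, one must accept that strictness genuinely fails at the top dimension: unlike the cases $n=1$ and $n=2$, fully strict $3$-groupoids do \emph{not} model all $3$-types, which is precisely why the semi-strict $\mathsf{Gray}$ setting is forced upon us.

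The main obstacle is the $n=3$ case. The lower two dimensions are classical and essentially formal once $\Pi_n$ is in hand, but at $n=3$ three difficulties compound: one must pass through the Gordon--Power--Street coherence theorem to legitimize working with $\mathsf{Gray}$-groupoids; one must carry out the construction and verification of the model structure of \cite{MR2842929}, which is far from routine; and one must handle the localization correctly, since, as already at $n=2$, it is only after inverting the $\mathsf{Gray}$-equivalences that $\mathsf{Gray}$-functors represent all maps of $3$-types. Since the present article uses this Fact only for $n\leq 2$, for our purposes it suffices to cite the $n=1$ and $n=2$ statements in the stated forms, and the $n=3$ statement is recorded only for context.
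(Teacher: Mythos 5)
Your proposal takes essentially the same approach as the paper: the statement is recorded there as a \emph{Fact} with no proof beyond the citations themselves --- the classical case for $n=1$, Moerdijk--Svensson \cite{MR1239560} for $n=2$, and Lack's model structure on $\mathsf{Gray}$-categories \cite{MR2842929} for $n=3$ --- which is exactly the assembly you carry out. Your surrounding commentary (the nerve/fundamental-$n$-groupoid comparison, the need to localize at strict $2$-equivalences since strict $2$-functors alone do not represent all maps of $2$-types, and the genuine failure of fully strict $3$-groupoids to model all $3$-types, forcing the semi-strict $\mathsf{Gray}$ setting) is accurate and consistent with the paper's framing.
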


\begin{defn}
Recall that the path space and homotopy fiber construction produces a fibration from any continuous map of spaces.
We now explain this in the language of $n$-groupoids.

Suppose $\cC, \cD$ are $n$-groupoids and $U: \cC \to \cD$ is an $n$-functor.
The \emph{path space} of $U$, denoted $\Path(U)$, has objects triples $(c,d, \psi)$ with $c\in \cC$, $d\in \cD$ and $\psi \in \cC(U(c)\to d)$ an isomorphism,
$(c_1,d_1,\psi_1)\to (c_2,d_2,\psi_2)$ are triples $(A,B,\alpha)$ with $A\in \cC(c_1\to c_2)$, $B\in \cD(d_1\to d_2)$, and $\alpha \in \cD(\psi_2\circ U(A)\Rightarrow B\circ \psi_1)$ is a 2-isomorphism,
and so forth, where $k$-morphisms consist of triples of a $k$-morphism in $\cC$, a $k$-morphism in $\cD$, and a $(k+1)$-isomorphism in $\cD$ compatible with lower structure.
Here, we interpret an $(n+1)$-isomorphism as an equality.

The \emph{homotopy fiber} of $U$ at $d\in \cD$, denoted $\hoFib_d(U)$, has objects
pairs $(c,\psi)$ with $c\in \cC$ and $\psi \in \cD(U(c)\to d)$ an isomorphism,
1-morphisms $(c_1,\psi_1)\to (c_2,\psi_2)$ are pairs $(A,\alpha)$ with $A\in \cC(c_1\to c_2)$ and $\alpha: \cD(\psi_1 \Rightarrow \psi_2 \circ U(A))$ is a 2-isomorphism, and so forth, where $k$-morphisms consist of pairs of a $k$-morphism in $\cC$ and a $(k+1)$-isomorphism in $\cD$ compatible with lower structure.
Here, we interpret an $(n+1)$-isomorphism as an equality.
\end{defn}

\begin{defn}
Suppose $\cC, \cD$ are $n$-groupoids and $U: \cC \to \cD$ is a weak $n$-functor.
We call $U$ $k$-\emph{truncated} or $(k+1)$-\emph{monic} \cite[\S5.5]{MR2664619} if 
\begin{itemize}
\item
$k=n$: no condition
\item
$k=n-1$: faithful on $n$-morphisms
\item
$k=n-2$: fully faithful on $n$-morphisms
\item
$-2\leq k< n-2$: fully faithful on $n$-morphisms and essentially surjective on $j$-morphisms for all $k+2\leq j\leq n-1$.
(Thus a $(-2)$-truncated $n$-functor is an equivalence.)
\end{itemize}
Under the homotopy hypothesis, 
$U$ being $n$-truncated corresponds to $U_*: \pi_*(\cC) \to \pi_*(\cD)$ being 
injective on $\pi_{k+1}(\cC)$ and an isomorphism on $\pi_{j}(\cC)$ for all $j\geq k+2$ for all basepoints.
\end{defn}

\begin{prop}
\label{prop:TruncatedGroupoids}
Suppose $\cC,\cD$ are $n$-groupoids, and $U: \cC \to \cD$ is a weak $n$-functor.
For every $-2\leq k\leq n$,
$U$ is $k$-truncated if and only if 
at each object $d\in \cD$, the homotopy fiber $\hoFib_d(U)$ is $k$-truncated as an $n$-groupoid, i.e., a $k$-groupoid.\footnote{Here, we use `negative categorical thinking' \cite{MR2664619} when $k=-2,-1,0$.
That is, a $0$-groupoid is a set, a $(-1)$-groupoid is either a point or the empty set, and a $(-2)$-groupoid is a point.}
\end{prop}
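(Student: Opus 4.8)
The plan is to reduce the statement to the long exact sequence of homotopy groups of a fibration, using the homotopy hypothesis recorded in the Fact above to pass freely between $n$-groupoids and homotopy $n$-types. Concretely, I would realize $U\colon \cC \to \cD$ topologically as a map of homotopy $n$-types and recall that $\Path(U)$ is the standard replacement of $U$ by a fibration: there is a levelwise equivalence $\cC \simeq \Path(U)$ over $\cD$, and the fiber of $\Path(U) \to \cD$ over $d$ is precisely $\hoFib_d(U)$. Thus for each object $d \in \cD$ and each basepoint $(c,\psi)$ of $\hoFib_d(U)$ we obtain the fibration sequence $\hoFib_d(U) \to \cC \xrightarrow{U} \cD$ together with its long exact sequence
\[
\cdots \to \pi_{j}(\hoFib_d(U)) \to \pi_{j}(\cC) \xrightarrow{U_*} \pi_{j}(\cD) \to \pi_{j-1}(\hoFib_d(U)) \to \cdots
\]
based at $(c,\psi)$, $c$, and $d$ respectively. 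I would then invoke the two homotopy-theoretic reformulations already in hand: $U$ is $k$-truncated exactly when $U_*$ is injective on $\pi_{k+1}$ and an isomorphism on $\pi_j$ for $j \ge k+2$ (at all basepoints), while $\hoFib_d(U)$ is a $k$-groupoid exactly when $\pi_j(\hoFib_d(U))$ is trivial for all $j > k$ (at all basepoints).

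With this setup both implications are diagram chases. For the forward direction, assume $U$ is $k$-truncated. For $j \ge k+2$ the maps $U_*$ on $\pi_{j}$ and $\pi_{j+1}$ are isomorphisms, so exactness forces $\pi_j(\hoFib_d(U))$ trivial; at $j = k+1$ the surjectivity of $U_*$ on $\pi_{k+2}$ kills the connecting map and the injectivity of $U_*$ on $\pi_{k+1}$ kills $\pi_{k+1}(\hoFib_d(U)) \to \pi_{k+1}(\cC)$, whence $\pi_{k+1}(\hoFib_d(U))$ is trivial. So each fiber is a $k$-groupoid. For the converse, assume every $\pi_j(\hoFib_d(U))$ vanishes for $j > k$. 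For $j \ge k+2$ both $\pi_j(\hoFib_d(U))$ and $\pi_{j-1}(\hoFib_d(U))$ vanish, so exactness makes $U_*$ an isomorphism on $\pi_j$; for $j = k+1$ the vanishing of $\pi_{k+1}(\hoFib_d(U))$ makes $U_*$ injective on $\pi_{k+1}$. These are exactly the conditions characterizing a $k$-truncated functor.

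Two bookkeeping points make the quantifiers match. First, every object $c \in \cC$ is the image of the basepoint $(c, \id_{U(c)})$ of $\hoFib_{U(c)}(U)$, and conversely every basepoint of any fiber projects to an object of $\cC$; so ranging over all basepoints of all fibers and ranging over all basepoints of $\cC$ carry the same homotopy-group information through the sequence. Second, fibers over objects $d$ not in the image of $U$ are empty, which is consistent on both sides for $k \ge -1$ (the empty set is a $k$-groupoid) and, in the edge case $k = -2$, matches the requirement that a $(-2)$-truncated, i.e.\ equivalence, functor be essentially surjective, so that no fiber is empty.

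I expect the main obstacle to be precisely these low-degree and basepoint subtleties rather than the chase itself: for $j = 0,1$ the long exact sequence is only an exact sequence of pointed sets together with a group action, so triviality of $\pi_0$ and $\pi_1$ of the fiber, injectivity versus surjectivity of $U_*$ on $\pi_1$ and $\pi_0$, and the ``negative thinking'' reading of the cases $k = -2, -1, 0$ (point, point-or-empty, set) must each be verified using pointed-set exactness rather than naive kernel/image arguments. Once those degenerate degrees are handled, the higher degrees are governed entirely by the ordinary exact sequence of homotopy groups and the argument above applies uniformly.
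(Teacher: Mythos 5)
Your proposal is correct and follows essentially the same route as the paper's proof: realize $U$ topologically via the homotopy hypothesis, replace it by the fibration $\hoFib_d(U) \to \Path(U) \to \cD$ with $\Path(U) \simeq \cC$, and chase the long exact sequence of homotopy groups against the characterizations of $k$-truncated functors and $k$-groupoids. Your extra care with basepoints, pointed-set exactness in low degrees, and empty fibers fills in details the paper leaves implicit, but the argument is the same.
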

\begin{proof}
Under the homotopy hypothesis, given a $d\in \cD$, we have a fibration $\hoFib_d(U) \to \Path(U) \to \cD$ which yields a long exact sequence in homotopy groups.
Recall $\hoFib_d(U)$ is a $k$-groupoid if and only if $\pi_{j}(\hoFib_d(U))=0$ for all $j>k$.
Since $\Path(U)$ is homotopy equivalent to $\cC$, this happens if and only if $U_*: \pi_*(\cC) \to \pi_*(\cD)$ gives an injection $\pi_{k+1}(\cC)\hookrightarrow \pi_{k+1}(\cD)$ and an isomorphism on $\pi_{j}(\cC)\cong \pi_j(\cD)$ for all $j\geq k+2$.
The result now follows by quantifying over all objects $d\in \cD$.
\end{proof}

\begin{remark}
In this article, we will only every use the above proposition on $n$-functors between $n$-groupoids where $n\leq 2$, where the homotopy hypothesis is known to hold.
Furthermore, it is straightforward to give an explicit proof of Proposition \ref{prop:TruncatedGroupoids} for 2-groupoids using the formalism of bicategories 
which does not invoke the homotopy hypothesis.
We leave these details to the interested reader.
\end{remark}


\subsection{Strictification of fibers}

We now discuss for how to strictify the homotopy fiber of a 0-truncated functor $U: \cC \to \cD$ of $n$-groupoids for $n=1,2$.

\subsubsection{\texorpdfstring{$n=1$}{n=1}}
\label{sec:StrictifyFibersFor1Groupoids}
Suppose $\cC,\cD$ are groupoids and $U: \cC \to \cD$ is a 0-truncated functor, i.e., a faithful functor.
In the previous section, we saw that $U: \cC \to \cD$ gives a fibration $\Path(U) \to \cD$.
In order to strictify the homotopy fiber, it is necessary that $U: \cC \to \cD$ is a fibration in the canonical model structure on $\Cat$, i.e., an \emph{isofibration}, meaning every isomorphism in $\cD$ can be lifted to $\cC$.
Since $U$ was assumed to be faithful, every isomorphism can be lifted \emph{uniquely} subject to a fixed source and target.

However, an isofibration is not quite strong enough to strictify the homotopy fiber, as we may not have uniqueness of the source of the lifted isomorphism in $\cC$.
Hence we requre that
\begin{itemize}
\item
$U$ is a \emph{discrete fibration}, i.e., for every $c\in \cC$ and every morphism $g\in \cD(d\to U(c))$, there is a unique $b\in \cC$ and a unique morphism $f\in \cC(b\to c)$ such that $U(b)=d$ and $U(f)=g$.
\end{itemize}

\begin{defn}
We define the \emph{strict fiber} $\stFib_d(U)$ of $U$ at $d$ is the set of objects $c\in \cC$ such that $U(c)=d$.
\end{defn}

\begin{prop}
\label{prop:StrictFiber}
Suppose $U: \cC \to \cD$ is a 0-truncated functor of 1-groupoids which is a discrete fibration.
For every $d\in \cD$, there is a canonical bijection
$\stFib_d(U)\cong \tau_0(\hoFib_d(U)))$, the 0-truncation of the homotopy fiber of $U$ at $d$.
\end{prop}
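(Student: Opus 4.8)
The plan is to construct the bijection explicitly and verify it directly, invoking the discrete fibration hypothesis exactly once for surjectivity (its existence clause) and once for injectivity (its uniqueness clause). Before doing anything, I would record the form that morphisms in $\hoFib_d(U)$ take in the $1$-groupoid setting: since $\cC,\cD$ are $1$-groupoids, the convention that $(n+1)$-isomorphisms are equalities means a morphism $(c_1,\psi_1)\to(c_2,\psi_2)$ is just a morphism $A\in\cC(c_1\to c_2)$ satisfying $\psi_1=\psi_2\circ U(A)$, and $\tau_0(\hoFib_d(U))$ is the set of isomorphism classes of objects $(c,\psi)$ with $\psi\colon U(c)\to d$ an isomorphism.

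Next I would define the candidate map $\Phi\colon\stFib_d(U)\to\tau_0(\hoFib_d(U))$. Given $c\in\stFib_d(U)$ we have $U(c)=d$, so $\id_d$ is a legitimate isomorphism $U(c)\to d$, and I set $\Phi(c):=[(c,\id_d)]$; this is manifestly well-defined on the level of sets. For surjectivity, I would take an arbitrary object $(c,\psi)$ of $\hoFib_d(U)$ and apply the discrete fibration property to the object $c$ together with the morphism $\psi^{-1}\in\cD(d\to U(c))$, obtaining a unique $b\in\cC$ with $U(b)=d$ and a unique $f\in\cC(b\to c)$ with $U(f)=\psi^{-1}$. Then $b\in\stFib_d(U)$, and $\psi\circ U(f)=\id_d$ exhibits $f$ as a morphism $(b,\id_d)\to(c,\psi)$ in the homotopy fiber; since $\cC$ is a groupoid, $f$ is invertible, so $(b,\id_d)\iso(c,\psi)$ and $\Phi(b)=[(c,\psi)]$.

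For injectivity, I would suppose $c_1,c_2\in\stFib_d(U)$ with $\Phi(c_1)=\Phi(c_2)$, witnessed by an isomorphism $A\colon(c_1,\id_d)\to(c_2,\id_d)$; its defining condition $\id_d=\id_d\circ U(A)$ gives $U(A)=\id_d$. Applying the uniqueness clause of the discrete fibration property to $c_2$ and the morphism $\id_d\in\cD(d\to U(c_2))$, whose evident lift is $(c_2,\id_{c_2})$, I conclude that any lift is this one; since $A\colon c_1\to c_2$ also lifts $\id_d$ with $U(c_1)=d$, uniqueness of the source forces $c_1=c_2$ (and $A=\id_{c_2}$). Together these show $\Phi$ is a bijection, and its naturality in $d$ gives the asserted canonicity.

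The argument is short, and the only point I would be careful about — the place where a sign or direction error could creep in — is the bookkeeping around the "$2$-isomorphism as equality" convention in the definition of $\hoFib_d(U)$, in particular the orientation of $\psi$ relative to the morphism $\psi^{-1}$ that is fed into the discrete fibration. Once that is pinned down, each half of the proof is an immediate application of the corresponding half (existence, then uniqueness) of the discrete fibration hypothesis, so I do not anticipate any genuine obstacle beyond this matching of conventions.
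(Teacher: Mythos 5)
Your proof is correct and is essentially the paper's argument: you construct the same canonical map $c\mapsto[(c,\id_d)]$ and use the existence and uniqueness clauses of the discrete fibration hypothesis exactly where the paper does, merely proving bijectivity directly rather than exhibiting the two maps $\Phi,\Psi$ and checking they are mutually inverse. If anything, your bookkeeping is slightly more careful than the paper's, which quantifies the lifted morphism as $g\in\cD(d\to U(c))$ while objects of $\hoFib_d(U)$ carry $\psi\in\cD(U(c)\to d)$ --- precisely the orientation issue you flag and resolve by lifting $\psi^{-1}$.
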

\begin{proof}
Suppose $d\in \cD$.
If $d$ is not in the essential image of $U$, then both $\stFib_d(U)$ and $\hoFib_d(U)$ are empty.

Now assume $d$ is in the essential image of $U$ so that there is a $c\in \cC$ and a $g\in \cD(d\to U(c))$.
Since $U$ is a discrete fibration, there is a unique $b\in \cC$ and a unique $f\in \cC(b\to c)$ such that $U(f)=g$.
In particular, $U(b)=d$.

Denote the 0-truncation of $\hoFib_d(U)$ by $\tau_0(\hoFib_d(U))$.
Recall that elements of $\tau_0(\hoFib_d(U))$ are equivalence classes $[(c,g)]$ where $(c_1,g_1)\sim (c_2,g_2)$ if there exists an isomorphism $f: c_1 \to c_2$ such that $g_2\circ U(f)=g_1$.
Observe that if 
$(c,g)\in \hoFib_d$
and
$b\in \cC$ such that $U(b)=d$ and $f: b\to c$ such that $U(f)=g$, 
then $[(b,\id_b)]=[(c,g)]$.

Define $\Phi : \tau_0(\hoFib_d(U))\to \stFib(U)$ by $[(c,g)]\mapsto b$ where $U(b)=d$ and $U(f)=g$.
It is straightforward to verify that this map is well-defined.
Going the other way, define $\Psi: \stFib_d(U) \to \tau_0(\hoFib_d(U))$ by $b\mapsto [(b, \id_d)]$.
It is straightforward to show these maps are mutually inverse.
\end{proof}

\begin{example}
Let $\cC$ be a fusion category and $Z(\cC)$ its Drinfeld center.
Consider the forgetful tensor functor $\Forget_Z: Z(\cC) \to \cC$ which is faithful.
Its restriction to cores $\Forget_Z:\core(Z(\cC))\to \core(\cC)$ is also a discrete fibration.
Given a $c\in \cC$, the 0-truncation of the homotopy fiber $\tau_0(\hoFib_c(\Forget_Z))$ is in canonical bijection with the strict fiber $\stFib_c(\Forget_Z)$, which we view as the \emph{set of half-braidings on $c$}.
\end{example}

\begin{example}
Suppose $\cC\subset \cD$ is a (fully faithful) inclusion of fusion categories.
The forgetful functor $\Forget_Z:Z_\cC(\cD) \to \cD$ from the relative Drinfeld center to $\cD$ is fully faithful.
Moreover, its restriction to cores $\Forget_Z:\core(Z_\cC(\cD))\to \core(\cD)$ is also a discrete fibration.
Given a $d\in \cD$, the 0-truncation of the homotopy fiber $\tau_0(\hoFib_d(\Forget_Z))$ is in canonical bijection with the strict fiber $\stFib_d(\Forget_Z)$, which we view as the \emph{set of relative half-braidings on $d$ with $\cC$}.
\end{example}

\begin{example}
\label{ex:LiftToEquivariantization}
Let $\cC$ be a fusion category and $(\rho,\rho^1, \rho^2) : \uG \to \uAut_\otimes(\cC)$ a categorical $G$-action on $\cC$ by tensor automorphisms.
We write $\rho_g^1,\rho_g^2$ for the unitor and tensorator of $\rho_g$.
Recall from \cite[\S2.7]{MR3242743} that the \emph{equivariantization} $\cC^G$ 
has
\begin{itemize}
\item
objects are $c\in \cC$ together with a family of isomorphisms $\lambda_g \in \cC(g(c)\to c)$ for $g\in G$ such that the following diagram commutes:
$$
\begin{tikzcd}
\rho_g(\rho_h(c))
\arrow[r, "\rho_g(\lambda_h)"]
\arrow[d, "(\rho^2_{g,h})_c"]
&
\rho_g(c)
\arrow[d, "\lambda_g"]
\\
\rho_{gh}(c)
\arrow[r,"\lambda_{gh}"]
&
c.
\end{tikzcd}
$$
\item
morphisms are $f\in \cC((c,\lambda_g) \to (d, \kappa_g))$ such that the following diagram commutes for all $g\in G$:
$$
\begin{tikzcd}
\rho_g(c)
\arrow[r, "g(f)"]
\arrow[d, "\lambda_g"]
&
\rho_g(d)
\arrow[d, "\kappa_g"]
\\
c
\arrow[r,"f"]
&
c.
\end{tikzcd}
$$
\end{itemize}
The equivariantization tensor product given by
$$
(c, \lambda_g) \otimes (d, \kappa_g):=(c\otimes d, (\lambda_g\otimes \kappa_g)\circ (\rho_g^2)_{c,d}^{-1})
$$
and unit object $(1_\cC, \id_{1_\cC})$.

We have an obvious faithful forgetful tensor functor $\Forget_G: \cC^G\to \cC$ which forgets the $G$-equivariant structure. 
Its restriction to cores $\core(\cC^G) \to \core(\cC)$ is also a discrete fibration.
Given $c\in \cC$, the $0$-truncation of the homotopy fiber $\tau_0(\hoFib_c(\Forget_G))$ is in canonical bijection with the strict fiber $\stFib_c(\Forget_G)$, which we view as the \emph{set of $G$-equivariant structures on $c$}.
\end{example}

\begin{example}[Set of lifts of a (monoidal) functor]
\label{ex:LiftsOfFunctor}
Suppose $\cA,\cB,\cC$ are (monoidal) categories and $F: \cA \to \cC$ and $G: \cB \to \cC$ are (monoidal) functors.
The \emph{category of lifts of $F$} is the homotopy fiber $\hoFib_F(G\circ -)$ where $G\circ- : \core(\Hom(\cA \to \cB)) \to \core(\Hom(\cA \to \cC))$.
In more detail,
\begin{itemize}
\item 
objects are pairs $(\widetilde{F}, \alpha)$ with $\widetilde{F}: \cA \to \cB$ a (monoidal) functor and $\alpha: F \Rightarrow G\circ \widetilde{F}$ a (monoidal) equivalence, and
\item
1-morphisms $(\widetilde{F}_1, \alpha_1)\to (\widetilde{F}_2,\alpha_2)$ are (monoidal) natural isomorphisms $\eta: \widetilde{F}_1 \Rightarrow \widetilde{F}_2$ such that 
$$
\begin{tikzcd}
&&
\cB
\arrow[dd, "G"]
\\
&\mbox{}
\arrow[dr, Rightarrow, swap, shorten <= 1em, shorten >= 1em, "\alpha_1"]
&
\\
\cA
\arrow[uurr, "\widetilde{F}_1"]
\arrow[rr,"F"]
&&
\cC
\end{tikzcd}
\qquad
=
\qquad
\begin{tikzcd}
\mbox{}
\arrow[dr, Rightarrow, shorten <= 1.5em, shorten >= .5em, "\,\,\,\,\eta"]
&&
\cB
\arrow[dd, "G"]
\\
&\mbox{}
\arrow[dr, Rightarrow, swap, shorten <= 1em, shorten >= 1em, "\alpha_2"]
&
\\
\cA
\arrow[uurr, near end, "\widetilde{F}_2"]
\arrow[uurr,bend left = 90, "\widetilde{F}_1"]
\arrow[rr, "F"]
&&
\cC.
\end{tikzcd}
$$
\end{itemize} 
When $G$ is 0-truncated, $G\circ -$ is 0-truncated.
If moreover the restriction $G:\core(\cB)\to \core(\cC)$ is a discrete fibration,
then so is the restriction $G\circ- : \core(\Hom(\cA \to \cB)) \to \core(\Hom(\cA \to \cC))$.
We leave the verification of this enjoyable exercise to the reader.
In this case, the 0-truncation of the homotopy fiber $\tau_0(\hoFib_F(G\circ -)$ is in canonical bijection with the strict fiber $\stFib_F(G\circ-)$, which we view as the \emph{set of lifts of $F$}.
These are exactly the functors $\widetilde{F}: \cA \to \cB$ such that $G\circ \widetilde{F}=F$ \emph{on the nose}.
\end{example}

\subsubsection{\texorpdfstring{$n=2$}{n=2}}
\label{sec:StrictifyFibersFor2Groupoids}
Suppose $\cC,\cD$ are 2-groupoids and $U: \cC \to \cD$ is a 0-truncated 2-functor, i.e., fully faithful on 2-morphisms.
Again, in order to strictify the homotopy fiber, it is necessary, but not sufficient, that $U$ is a fibration in the canonical model structure on bicategories \cite[\S2]{MR2138540}.

\begin{defn}
By a slight abuse of notation, we call such a 0-truncated 2-functor $U: \cC \to \cD$ a \emph{discrete fibration} if for each $c\in \cC$ and 1-morphism $g\in \cD(d\to U(c))$, there is a unique $b\in \cC$ and a unique 1-morphism $f\in \cC(b\to c)$ such that $U(b)=d$ and $U(f)=g$.
\end{defn}

\begin{defn}
We define the \emph{strict fiber} $\stFib_d(U)$ of $U$ at $d$ is the set of objects $c\in \cC$ such that $U(c)=d$.
\end{defn}

The proof of the following proposition is similar to Proposition \ref{prop:StrictFiber} and omitted.

\begin{prop}
Suppose $U: \cC \to \cD$ is a 0-truncated 2-functor of small 2-groupoids which is a discrete fibration.
For every $d\in \cD$, there is a canonical bijection $\stFib_d(U)\cong \tau_0(\hoFib_d(U))$.
\end{prop}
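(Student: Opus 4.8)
The plan is to run the argument of Proposition \ref{prop:StrictFiber} essentially verbatim at the level of objects and $1$-morphisms, and to isolate the one genuinely new feature of the $2$-groupoid setting: the equivalence relation defining $\tau_0(\hoFib_d(U))$ is now witnessed by coherence $2$-isomorphisms rather than by the on-the-nose equalities available when $n=1$. As in the $1$-groupoid case I would first dispose of the trivial case: if $d$ is not in the essential image of $U$ then both $\stFib_d(U)$ and $\hoFib_d(U)$ are empty, so I may assume $d$ lies in the essential image. I would then define the two maps exactly as before. Given an object $(c,\psi)$ of $\hoFib_d(U)$, so $\psi\colon U(c)\to d$ is an invertible $1$-morphism, I apply the discrete fibration property to the object $c$ and the $1$-morphism $\psi^{-1}\colon d\to U(c)$ to obtain the unique $b\in\cC$ and the unique $1$-morphism $f\colon b\to c$ with $U(b)=d$ and $U(f)=\psi^{-1}$, and set $\Phi([(c,\psi)]):=b$. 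In the other direction I define $\Psi\colon \stFib_d(U)\to\tau_0(\hoFib_d(U))$ by $b\mapsto [(b,\id_d)]$, using that $\id_d\colon U(b)=d\to d$ is invertible.

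The composite $\Phi\circ\Psi$ is the identity by the uniqueness clause of the discrete fibration: for $b$ with $U(b)=d$ the pair $(b,\id_b)$ already lifts $\id_d$ into the target $b$, so it is \emph{the} lift and $\Phi([(b,\id_d)])=b$. For $\Psi\circ\Phi$ I would verify, as in the $1$-groupoid case, that $(b,\id_d)$ and $(c,\psi)$ lie in the same connected component of $\hoFib_d(U)$: the lifted $f\colon b\to c$ together with the canonical $2$-isomorphism $\id_d\Rightarrow \psi\circ U(f)=\psi\circ\psi^{-1}$ witnessing that $\psi^{-1}$ is inverse to $\psi$ assembles into a $1$-morphism $(f,\alpha)\colon(b,\id_d)\to(c,\psi)$ in $\hoFib_d(U)$. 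Hence $\Psi(\Phi([(c,\psi)]))=[(c,\psi)]$, which simultaneously shows that $\Psi$ is surjective and that $\Phi$ is a section of it. What remains is to see that $\Psi$ is injective, equivalently that $\Phi$ descends to a well-defined map on components.

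The hard part is precisely this last step, and it is where the $2$-categorical structure enters. Suppose $(b_1,\id_d)$ and $(b_2,\id_d)$ are connected by a $1$-morphism $(A,\alpha)$, so that $A\colon b_1\to b_2$ and $\alpha\colon \id_d\Rightarrow U(A)$ is a $2$-isomorphism in $\cD(d\to d)$; I must deduce the strict equality $b_1=b_2$. When $n=1$ the datum $\alpha$ was literally the equation $U(A)=\id_d$ and the uniqueness of lifts finished the argument at once. For $n=2$ the plan is to \emph{strictify} $\alpha$: combining the full faithfulness of $U$ on $2$-morphisms ($0$-truncation) with the cell-lifting of the discrete fibration, I would lift $\alpha$ to a $2$-isomorphism $\widetilde\alpha\colon L\Rightarrow A$ in $\cC$, keeping the source and target $0$-cells $b_1,b_2$ fixed, whose domain $L\colon b_1\to b_2$ satisfies $U(L)=\id_d$ on the nose. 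Then $L$ is a lift of $\id_d\in\cD(d\to U(b_2))$ into the target $b_2$, so the uniqueness clause of the discrete fibration forces $L=\id_{b_2}$ and hence $b_1=b_2$. I expect this lifting-and-strictification of the coherence isomorphism to be the main obstacle, since it is the single point at which the two hypotheses on $U$ must be used in tandem rather than separately (full faithfulness on $2$-cells alone cannot produce the lifted source $1$-cell $L$, and $1$-cell lifting alone cannot see $\alpha$); once it is in hand, the object-level bookkeeping is identical to Proposition \ref{prop:StrictFiber}. One could alternatively bypass an explicit inverse by noting that $\hoFib_d(U)$ is $0$-truncated by Proposition \ref{prop:TruncatedGroupoids}, so that $\tau_0(\hoFib_d(U))=\pi_0(\hoFib_d(U))$ genuinely computes its set of components, but the strictification above is what pins this set down to the strict fiber.
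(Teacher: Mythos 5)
The paper itself omits the proof of this proposition, declaring it ``similar to Proposition~\ref{prop:StrictFiber}''; your object-level bookkeeping ($\Phi$, $\Psi$, the case split on the essential image, $\Phi\circ\Psi=\id$ via uniqueness of lifts, and the reduction of injectivity of $\Psi$ to the implication $(b_1,\id_d)\sim(b_2,\id_d)\Rightarrow b_1=b_2$) is exactly the expected adaptation of the $n=1$ argument, and you have correctly isolated the one genuinely $2$-categorical point. However, the strictification step you propose for that point does not follow from the two stated hypotheses, and this is a real gap rather than a routine verification. Full faithfulness on $2$-morphisms only supplies bijections $\cC(f\Rightarrow g)\to\cD(U(f)\Rightarrow U(g))$ for $1$-morphisms $f,g$ that \emph{already exist and are parallel} in $\cC$; it cannot manufacture the $1$-cell $L\in\cC(b_1\to b_2)$ with $U(L)=\id_d$. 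Meanwhile the $1$-cell lifting clause works against you: the unique lift of $\id_d\in\cD(d\to U(b_2))$ has source $b_2$, so a $1$-morphism $L:b_1\to b_2$ over $\id_d$ exists only if $b_1=b_2$ --- producing $L$ presupposes precisely the conclusion you are trying to reach. Your closing remark that one could instead invoke Proposition~\ref{prop:TruncatedGroupoids} does not help either: $0$-truncatedness of the homotopy fiber says nothing about injectivity of $\Psi$.

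In fact the step is not merely unproved but unprovable from the hypotheses as literally stated. Take $\cD$ to be the strict $2$-groupoid with one object $d$ whose hom-groupoid has two objects $u_0=\id_d$ and $u_1$, a unique isomorphism between any two of them, and strict composition $u_1\circ u_1=u_0$; take $\cC$ to be the groupoid with two objects $b_1,b_2$ and a unique isomorphism between any pair, viewed as a $2$-groupoid with only identity $2$-cells; let $U$ send both objects to $d$, identities to $u_0$, and the generating isomorphisms $b_1\to b_2$ and $b_2\to b_1$ to $u_1$. Then $U$ is fully faithful on $2$-morphisms and satisfies the unique $1$-cell lifting clause verbatim, yet $\stFib_d(U)=\{b_1,b_2\}$ while $\hoFib_d(U)$ is connected (the $2$-isomorphism $u_0\Rightarrow u_1$ yields a $1$-morphism $(b_1,\id_d)\to(b_2,\id_d)$), so $\tau_0(\hoFib_d(U))$ is a singleton and no bijection exists. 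What is missing --- in your argument, and strictly speaking in the definition being used --- is a $2$-cell lifting clause: for every $f\in\cC(b\to c)$ and every $2$-isomorphism $\gamma:g\Rightarrow U(f)$ in $\cD$, there should be a unique $\widetilde{g}\in\cC(b\to c)$ and a unique $2$-isomorphism $\widetilde{\gamma}:\widetilde{g}\Rightarrow f$ with $U(\widetilde{\gamma})=\gamma$. Granting this, your strictification is immediate (lift $\alpha$ against $A$ to get $L\Rightarrow A$ with $U(L)=\id_d$, then uniqueness of $1$-cell lifts forces $L=\id_{b_2}$ and $b_1=b_2$), and the clause does hold for the forgetful $2$-functors to which the paper applies the proposition, since there the extra structure transports along invertible $2$-cells. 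So your diagnosis of where the difficulty sits is exactly right, but the claimed ``in tandem'' use of the two hypotheses is circular as written, and the argument as proposed would fail without strengthening the notion of discrete fibration.
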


\begin{example}[Set of braidings]
The strict 2-category 
$\BrdMonCat$
of 
braided monoidal categories, braided monoidal functors, and monoidal natural transformations
admits a strict forgetful 2-functor 
$\Forget_{\br}$
to 
the strict 2-category 
$\MonCat$
of
monoidal categories, monoidal functors, and monoidal natural transformations
which is fully faithful on 2-morphisms, 
since every monoidal natural transformation of braided monoidal functors is compatible with the braidings.
Moreover, its restriction to cores is a discrete fibration; indeed, if $\cB$ is a braided monoidal category, $\cC$ is a monoidal category, and $F: \cC \to \Forget_{\br}(\cB)$ is any monoidal equivalence, there is a unique way to transport the braiding on $\cB$ to a braiding on $\cC$ such that $F$ is a braided equivalence.
Fixing a monoidal category $\cC\in \core(\MonCat)$, the homotopy fiber $\hoFib_\cC(\Forget_{\br})$ is 0-truncated, i.e., a set.
Moreover, its 0-truncation $\tau_0(\hoFib_\cC(\Forget_{\br}))$ is in canonical bijection with the strict fiber $\stFib_\cC(\Forget_{\br})$, which we view as the \emph{set of braidings} on $\cC$.
\end{example}

\begin{example}[Set of $G$-gradings]
\label{ex:SetOfGGradings}
The strict 2-category 
$G\GrdMonCat$
of 
$G$-graded monoidal categories, $G$-graded monoidal functors, and natural transformations 
admits a strict forgetful 2-functor 
$\Forget_{G}$
to 
the strict 2-category $\MonCat$
which is fully faithful on 2-morphisms, 
since every monoidal natural transformation of $G$-graded monoidal functors is compatible with the gradings.
Moreover, its restriction to cores $\Forget_G: \core(G\GrdMonCat) \to \core(\MonCat)$ is a discrete fibration.
Fixing a monoidal category $\cC$, the 0-truncation of the homotopy fiber $\tau_0(\hoFib_\cC(\Forget_{G}))$ is in canonical bijection with the strict fiber $\stFib_\cC(\Forget_{G})$.
We think of this set as the \emph{set of $G$-gradings} on $\cC$.
\end{example}

\begin{example}[Equivalence classes of $G$-crossed braidings]
\label{example:SetOfGCrossedBraidings}
The strict 2-category 
$G\CrsBrd$
of 
$G$-crossed braided fusion categories, $G$-crossed braided monoidal functors, and natural transformations 
admits a strict forgetful 2-functor 
$\Forget_{\beta}$
to 
the strict 2-groupoid 
$G\GrdFusCat$
of $G$-graded fusion categories, $G$-graded monoidal functors, and natural transformations 
which is fully faithful on 2-morphisms.

Unfortunately, the restriction to cores $\Forget_{\beta}: \core(G\CrsBrd) \to \core(G\GrdFusCat)$ is \emph{not} a discrete fibration, as given a $G$-crossed braided fusion category $\cC$ and a $G$-graded equivalence to another $G$-graded fusion category $\cD$, there is not a unique $G$-crossed braiding on $\cD$ such that $\cC,\cD$ are $G$-crossed braided equivalent.
This discrepancy arises as there is not a unique $G$-action on $\cD$ compatible with the equivalence $\cC\cong \cD$, but an equivalence class.

Suppose $\cD=\bigoplus_{g\in G} \cD_g$ is a $G$-graded fusion category and
$(\rho^1,\beta^1),(\rho^2,\beta^2)$ are two $G$-crossed braidings on $\cD$. 
We say
$(\rho^1,\beta^1),(\rho^2,\beta^2)$ are \emph{equivalent} if there is an equivalence $\eta : \rho^1
\Rightarrow \rho^2$ of monoidal functors $\uG \to \uAut^\otimes(\cD)$ such that for all $x_g \in
\cD_g$ and $y\in \cD$,
$$
(\eta^g_{y}\otimes \id_{x_g})\circ \beta^1_{x_g, y} = \beta^2_{x_g, y} : x_g \otimes y \to \rho^2_g(y) \otimes x_g.
$$
Observe that there is \emph{at most one} equivalence between any two $G$-crossed braidings as the
monoidal natural isomorphism $\eta$ is completely determined by $\beta^1,\beta^2$ if it exists.
(Indeed, $\beta^1_{x_g, y}$ is invertible, and $-\otimes \id_c$ is injective on hom spaces for every
fusion category using \cite[Lem.~A.5]{MR3578212}.)

Thus, fixing a $G$-graded fusion category $\cD$, the 0-truncation of the homotopy fiber $\tau_0(\hoFib_\cD(\Forget_{\beta}))$ 
of
$\Forget_{\beta}: \core(G\CrsBrd) \to \core(G\GrdFusCat)$
is in canonical bijection with the set of equivalence classes $G$-crossed braidings on $\cD$.
\end{example}


\begin{example}[Set of lifts of a (monoidal) 2-functor]
\label{ex:Lift2Functor}
Given two (monoidal) 2-categories $\cA,\cB$, there is a 2-category $\Hom(\cA \to \cB)$ whose 
objects are (monoidal) 
2-functors, 
1-morphisms are (monoidal) natural transformations, and 
2-morphisms are (monoidal) modifications.
Now suppose $G:\cB \to \cC$ is a (monoidal) 2-functor between (monoidal) 2-categories and $\cA$ is another (monoidal) 2-category.
We have a 2-functor $G\circ- : \Hom(\cA \to \cB) \to \Hom(\cA \to \cC)$.
As in Example \ref{ex:LiftsOfFunctor}, if $G$ is 0-truncated, then so is $G\circ -$.
Moreover, if $G$ is a discrete fibration, then so is $G\circ -$.
Fixing $F\in \Hom(\cA \to \cC)$, the 0-truncation of the homotopy fiber $\tau_0(\hoFib_F(G\circ -))$ is in canonical bijection with the strict fiber $\stFib_F(G\circ -)$ of (monoidal) 2-functors $\widetilde{F}: \cA \to \cB$ such that $G\circ \widetilde{F}=F$ \emph{on the nose}.
We view this set as the \emph{set of lifts} of $F$.
\end{example}

\subsection{\texorpdfstring{$G$}{G}-gradings on fusion categories}
\label{sec:G-gradings}

Fix a finite group $G$. 
In this section, we explain how $G$-gradings on a fixed fusion category $\cT$ may be characterized in terms of braided-enriched structure.
In this section, our fusion categories are always over an algebraically closed field $\Bbbk$ of characteristic zero.
(Although it is not necessary, we would even be happy to assume further that $\Bbbk=\bbC$.)


\begin{defn}
\label{defn:CanonicalCopyOfRepGFromGrading}
Suppose $G$ is a group and $\cT=\bigoplus_{g\in G} \cT_g$ is a faithfully $G$-graded fusion category. 
In this case, by \cite[p.~12]{MR2587410} there is a canonical fully faithful strong monoidal functor
$\cI=\cI^\cT: \Rep(G) \to Z(\cT)$ defined as follows. 
For a representation $(K,\pi)\in \Rep(G)$, we
consider the object $\cI_\pi := K\otimes 1_{\cT} \in \cT$. 
Notice that both $\cI_\pi\otimes t$ and
$t\otimes \cI_\pi$ are canonically isomorphic to $K\otimes t$. 
Thus we can endow $\cI_\pi$ with the
half-braiding 
$$
\zeta_{t,\cI_\pi}
:=
\pi_g \otimes \id_t : t\otimes \cI_\pi \cong K\otimes t\longrightarrow  K\otimes t \cong \cI_\pi\otimes t
\qquad\qquad
t\in \cT_g.
$$
For a morphism $f : (K,\pi) \to (L,\rho)$, we get a morphism $\cI_f := f\otimes \id_{1_\cT}: \cI_\pi
\to \cI_\rho$. 
It is straightforward to verify:
\begin{itemize}
\item
$\cI$ is a fully faithful strong monoidal
functor (using the obvious tensorator/strength) since $\cT$ is tensored over $\Vec$,
\item
the forgetful functor $\Forget_Z: Z(\cT) \to \cT$ restricted to this copy of
$\Rep(G)\subseteq Z(\cT)$ is canonically monoidally naturally isomorphic 
to the canonical symmetric monoidal fiber functor
$$
\Forget_\Rep : \Rep(G) \to \Vec \cong \langle 1_\cT, \id_{1_\cT}\rangle \subseteq \cT.
$$
\end{itemize}
\end{defn}

The following important lemma is essentially in \cite{MR2587410}.
Recall that given a braided fusion category $(\cV,\beta)$ and a symmetric subcategory $\cS \subset \cV$, the \emph{M\"uger centralizer} $\cS'$ of $\cS\subset \cV$ is the full subcategory of $\cV$ whose objects are \emph{transparent} to $\cS$, i.e., $\beta_{v,s}\circ \beta_{s,v} = \id_{s\otimes v}$ for all $s\in\cS$.

\begin{lem}
\label{lem:LandInTrivialComponent}
An object $(t, \sigma_{\bullet,t})\in \Rep(G)' \subseteq Z(\cT)$ if and only if
$\Forget_Z(t,\sigma_{\bullet,t})= t\in \cT_e$.
\end{lem}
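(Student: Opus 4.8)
The plan is to unwind the definition of the Müger centralizer against the explicit half-braiding $\zeta$ on the copy of $\Rep(G)$ inside $Z(\cT)$ from Definition \ref{defn:CanonicalCopyOfRepGFromGrading}, and observe that transparency to all of $\Rep(G)$ forces the object to be supported in the trivial graded component $\cT_e$. First I would fix an object $(t,\sigma_{\bullet,t})\in Z(\cT)$ and, since $\cT=\bigoplus_{g\in G}\cT_g$, decompose $t=\bigoplus_{g\in G} t_g$ with $t_g\in \cT_g$; because the forgetful functor is faithful and the grading is preserved by the half-braiding structure, it suffices to test transparency on homogeneous summands, so I may assume $t\in\cT_g$ for a single $g$ and show that transparency to $\Rep(G)$ holds if and only if $g=e$.

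The key computation is to write out the double-braiding condition $\beta_{(t,\sigma),\cI_\pi}\circ \beta_{\cI_\pi,(t,\sigma)}=\id_{\cI_\pi\otimes t}$ using the convention from the Background section that the braiding on $Z(\cC)$ is given by the half-braiding of the \emph{second} argument. For a representation $(K,\pi)\in\Rep(G)$ and $t\in\cT_g$, one composite of the two braidings is the identity (coming from $\sigma$ together with the identification $\cI_\pi\otimes t\cong K\otimes t\cong t\otimes \cI_\pi$), while the other is governed by $\zeta_{t,\cI_\pi}=\pi_g\otimes\id_t$. The full double braiding therefore reduces, under the canonical isomorphisms $\cI_\pi\otimes t\cong K\otimes t$, to the operator $\pi_g\otimes\id_t : K\otimes t \to K\otimes t$. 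Thus $(t,\sigma)$ is transparent to $(K,\pi)$ precisely when $\pi_g=\id_K$.

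From here the equivalence is immediate in both directions. If $g=e$, then $\pi_e=\id_K$ for every representation, so $(t,\sigma)$ is transparent to all of $\Rep(G)$ and lies in $\Rep(G)'$. Conversely, if $(t,\sigma)\in\Rep(G)'$ with $t\in\cT_g$ nonzero, then $\pi_g=\id_K$ for \emph{every} finite-dimensional representation $(K,\pi)$ of $G$; taking $(K,\pi)$ to be a faithful representation (e.g. the regular representation) forces $g=e$. Reassembling the homogeneous pieces, a general $(t,\sigma)\in\Rep(G)'$ must have all components $t_g=0$ for $g\neq e$, i.e. $\Forget_Z(t,\sigma)=t\in\cT_e$, which is exactly the claim.

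\textbf{The main obstacle} I anticipate is purely bookkeeping: carefully tracking the canonical isomorphisms $\cI_\pi\otimes t\cong K\otimes t\cong t\otimes \cI_\pi$ through the definition of $\zeta$ and the $Z(\cC)$-braiding convention, so that the double braiding genuinely collapses to $\pi_g\otimes\id_t$ rather than to its inverse or to a conjugate; once that identification is pinned down with the stated sign/ordering conventions, the group-theoretic conclusion is routine. A minor secondary point is justifying the reduction to homogeneous $t$, which follows because the half-braiding $\sigma$ respects the grading (the object $t$ need not be homogeneous, but $\Rep(G)'$ is closed under the projections onto graded components, as these projections are realized by morphisms in $Z(\cT)$ coming from the central idempotents in $\cI(\Rep(G))$).
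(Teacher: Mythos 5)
Your proposal is correct and follows essentially the same route as the paper: compute the double braiding of $(t,\sigma_{\bullet,t})$ against $\cI_\pi$, note that $\sigma_{\cI_\pi,t}$ is the canonical swap because $\cI_\pi=K\otimes 1_\cT$ is a direct sum of copies of the unit, so transparency collapses to $\pi_g\otimes\id_{t_g}=\id$ for all $g$, and conclude via a faithful (e.g.\ regular) representation that $t_g=0$ for $g\neq e$. The only difference is cosmetic: the paper skips your reduction to homogeneous $t$ by decomposing $t=\bigoplus_g t_g$ and computing $\zeta_{t,\cI_\pi}=\bigoplus_g(\pi_g\otimes\id_{t_g})$ componentwise, so your closing concern about $\sigma$ respecting the grading never arises.
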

\begin{proof}
It is clear that $t\in \cT_e$ implies $(t,\sigma_{\bullet,t})\in \Rep(G)'$.
Suppose $(t,\sigma_{\bullet,t})\in \Rep(G)'$.
If $t=\bigoplus_g t_g$, then for all $(H,\pi)\in\Rep(G)$,
$$
\id_{\cI_\pi\otimes t}
=
\zeta_{t,\cI_\pi} \circ \sigma_{\Forget_Z(\cI_\pi),t}
=
\zeta_{t,\cI_\pi} \circ \sigma_{\bigoplus 1_\cT,t}
=
\bigoplus_g
(\pi_g\otimes \id_{t_g}).
$$
Since $\cT$ is fusion, the above holds if and only if $t_g=0$ for all $g\neq e$.
\end{proof}

\begin{defn}
\label{defn:fibered-enrichment}
Suppose $(\cV,F: \cV \to \Vec)$ is a braided fusion category equipped with a fixed faithful strong monoidal fiber functor.
Given a fixed fusion category $\cT$, a $\cV$-\emph{fibered enrichment} of $\cT$ is a
braided strong monoidal functor $\cF^{\sZ}:\cV \to Z(\cT)$ 
together with a monoidal natural isomorphism 
$$
\begin{tikzcd}
\cV
\arrow[r, "\cF^\sZ"]
\arrow[d,"F"]
&
Z(\cT)
\arrow[d, "\Forget_Z"]
\\
\Vec
\arrow[r,hookrightarrow, "i_\cT"]
\arrow[ur, Rightarrow, shorten <= 1em, shorten >= 1em, "\alpha"]
&
\cT
\end{tikzcd}
\qquad\qquad
\alpha: \cF:= \Forget_Z\circ \cF^{\sZ} \Rightarrow i_{\cT}\circ F
$$
where  $i_\cT:\Vec = \langle 1_\cT\rangle \hookrightarrow \cT$ is the inclusion $V\mapsto V\otimes 1_\cT$.

Similar to $\cV$-module tensor categories as in Definition \ref{defn:VModuleTensorCategories}, $\cV$-fibered enrichments form a 2-category $\cV\FibFusCat$.
A 1-morphism $(\cT_1,\cF_1^\sZ, \alpha^1)\to (\cT_2, \cF^\sZ_2, \alpha^2)$ is a 1-morphism $(H,\eta): (\cT_1,\cF_1^\sZ) \to (\cT_2,\cF_2^\sZ)$ of the underlying $\cV$-module tensor categories satisfying the extra compatibility with $\alpha^1,\alpha^2$:
\begin{equation}
\label{eq:FibFusCat1MorphismCoherence}
\begin{tikzcd}
&
\mbox{}
\arrow[d, Rightarrow, shorten <= .25em, shorten >= .25em, "\eta"]
\\
\cV
\arrow[r, "\cF_1^\sZ"]
\arrow[d,"F"]
\arrow[rr, bend left=60, "\cF_2^\sZ"]
&
Z(\cT_1)
\arrow[d, "\Forget_Z"]
&
Z(\cT_2)
\arrow[d, "\Forget_Z"]
\\
\Vec
\arrow[r,hookrightarrow, "i_{\cT_1}"]
\arrow[rr,hookrightarrow,bend right=60, swap, "i_{\cT_2}"]
\arrow[ur, Rightarrow, shorten <= 1em, shorten >= 1em, "\alpha^1"]
&
\cT_1
\arrow[r, "H"]
&
\cT_2
\\
&
\mbox{}
\arrow[u, Rightarrow, shorten <= .25em, shorten >= .25em, "H^1"]
\end{tikzcd}
=
\begin{tikzcd}
\cV
\arrow[r, "\cF_2^\sZ"]
\arrow[d,"F"]
&
Z(\cT_2)
\arrow[d, "\Forget_Z"]
\\
\Vec
\arrow[r,hookrightarrow, "i_{\cT_2}"]
\arrow[ur, Rightarrow, shorten <= 1em, shorten >= 1em, "\alpha^2"]
&
\cT_2
\end{tikzcd}
.
\end{equation}
Above, note that the unitality constraint 
$H^1: 1_{\cT_2}\to G(1_{\cT_1})$ 
determines a monoidal natural isomorphism still denoted $H^1:i_{\cT_2} \Rightarrow H\circ i_{\cT_1}$ of monoidal functors $\Vec \to \cT_2$.
Moreover, observe that $\eta$ is completely determined, as $\alpha^1,\alpha^2,H^1$ are all isomorphisms; indeed, \eqref{eq:FibFusCat1MorphismCoherence} above is equivalent to
which is equivalent to
\begin{equation}
\label{eq:FibFusCat1MorphismCoherence2}
\eta_v = H(\alpha^1_v) \circ (H^1)^{-1}_{F(v)}\circ (\alpha^2_v)^{-1}
\qquad\qquad
\forall\,v\in\cV.
\end{equation}

A 2-morphism $\kappa: (H_1,\eta^1) \Rightarrow (H_2,\eta^2)$ is an \emph{arbitrary} monoidal natural transformation $\kappa: H_1 \Rightarrow H_2$.
Observe that the the extra compatibility with the fiber functor $F$ amounts to unitality of the monoidal natural isomorphism $\kappa$, and the extra action coherence \eqref{eq:CoherenceForVModuleTensorCategory2Morphisms} with $\eta^1,\eta^2$ is automatically satisfied.
Indeed, setting $\cF_i:=\Forget_Z\circ \cF_i^\sZ$ for $i=1,2$, \eqref{eq:CoherenceForVModuleTensorCategory2Morphisms} automatically commutes by naturality and unitality of $\kappa$:
$$
\begin{tikzcd}
&&\mbox{}
\arrow[d,near start,phantom, "=\text{\scriptsize by \eqref{eq:FibFusCat1MorphismCoherence2}\hspace{2.5cm}}"]
\\
\cF_2(v)
\arrow[d, equals]
\arrow[r,"(\alpha^2_v)^{-1}"]
\arrow[rrr, bend left=30, "\eta^1_v"]
&
F(v)\otimes 1_{\cT_2}
\arrow[r,"(H_1^1)^{-1}_{F(v)}"]
\arrow[d, equals]
&
H_1(F(v)\otimes 1_{\cT_1})
\arrow[r,"H_1(\alpha^1_v)"]
\arrow[d,Rightarrow,"\kappa_{F(v)\otimes 1_{\cT_1}}"]
&
H_1(\cF_1(v))
\arrow[d,Rightarrow, "\kappa_{\cF_1(v)}"]
\\
\cF_2(v)
\arrow[r,"(\alpha^2_v)^{-1}"]
\arrow[rrr, bend right=30, "\eta^2_v"]
&
F(v)\otimes 1_{\cT_2}
\arrow[r,"(H_2^1)^{-1}_{F(v)}"]
&
H_2(F(v)\otimes 1_{\cT_1})
\arrow[r,"H_2(\alpha^1_v)"]
&
H_2(\cF_1(v))
\\
&&\mbox{}
\arrow[u,phantom, "=\text{\scriptsize by \eqref{eq:FibFusCat1MorphismCoherence2}\hspace{2.5cm}}"]
\end{tikzcd}
$$
\end{defn}

The following lemma allows us to work with an equivalent strict $\cV$-fibered enrichement.

\begin{lem}
\label{lem:StrictifyFiberedEnrichment}
The 2-category $\cV\FibFusCat$ is equivalent to the full 2-subcategory $\cV\FibFusCat^{\sf st}$ with objects $(\cT,\cF^\sZ,\id)$, i.e., $\Forget_Z\circ \cF^\sZ = i_\cT \circ F$ on the nose.
\end{lem}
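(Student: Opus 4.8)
The plan is to use that $\cV\FibFusCat^{\sf st}$ is by hypothesis a \emph{full} $2$-subcategory of $\cV\FibFusCat$, so the inclusion is automatically fully faithful on $1$- and $2$-morphisms. Hence the only thing to prove is that the inclusion is essentially surjective on objects: given an object $(\cT,\cF^\sZ,\alpha)$ I will produce an equivalent strict object $(\cT,\widetilde{\cF}^\sZ,\id)$ on the \emph{same} underlying fusion category $\cT$, for which $\Forget_Z\circ\widetilde{\cF}^\sZ=i_\cT\circ F$ on the nose.

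To build $\widetilde{\cF}^\sZ$ I transport $\cF^\sZ$ along $\alpha$, using that the forgetful functor restricts to a discrete fibration $\Forget_Z\colon\core(Z(\cT))\to\core(\cT)$ (as in the examples of \S\ref{sec:StrictifyFibersFor1Groupoids}). For each $v\in\cV$ the isomorphism $\alpha_v^{-1}\colon (i_\cT\circ F)(v)\to \cF(v)=\Forget_Z(\cF^\sZ(v))$ lifts uniquely to an object $\widetilde{\cF}^\sZ(v)\in Z(\cT)$ with $\Forget_Z(\widetilde{\cF}^\sZ(v))=(i_\cT\circ F)(v)$ on the nose, together with a unique isomorphism $f_v\colon\widetilde{\cF}^\sZ(v)\to\cF^\sZ(v)$ in $Z(\cT)$ with $\Forget_Z(f_v)=\alpha_v^{-1}$. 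Setting $\widetilde{\cF}^\sZ(\phi):=f_w^{-1}\circ\cF^\sZ(\phi)\circ f_v$ for $\phi\in\cV(v\to w)$ makes $\widetilde{\cF}^\sZ$ a functor and $f\colon\widetilde{\cF}^\sZ\Rightarrow\cF^\sZ$ a natural isomorphism; transporting the tensorator and unitor of $\cF^\sZ$ along $f$ (all inside the braided category $Z(\cT)$) is the standard transport of structure, promoting $\widetilde{\cF}^\sZ$ to a braided strong monoidal functor for which $f$ is a braided monoidal natural isomorphism.

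The crucial verification, and the step I expect to be the main (if formal) obstacle, is that $(\cT,\widetilde{\cF}^\sZ,\id)$ really is an object of $\cV\FibFusCat^{\sf st}$: I must check that $\id$ is a \emph{monoidal} natural isomorphism $\Forget_Z\circ\widetilde{\cF}^\sZ\Rightarrow i_\cT\circ F$, i.e.\ that these two strong monoidal functors agree on tensorators and not merely on underlying functors. Writing $\mu$ for the tensorator of $\cF^\sZ$, $\widetilde\mu_{v,w}=f_{v\otimes w}^{-1}\circ\mu_{v,w}\circ(f_v\otimes f_w)$ for the transported tensorator, and $\nu$ for the tensorator of $i_\cT\circ F$, applying the strict monoidal functor $\Forget_Z$ and using $\Forget_Z(f_v)=\alpha_v^{-1}$ gives
\begin{equation*}
\Forget_Z(\widetilde\mu_{v,w})=\alpha_{v\otimes w}\circ\Forget_Z(\mu_{v,w})\circ(\alpha_v^{-1}\otimes\alpha_w^{-1}).
\end{equation*}
This equals $\nu_{v,w}$ \emph{precisely} because $\alpha\colon\cF\Rightarrow i_\cT\circ F$ is a monoidal natural isomorphism; this is exactly where the monoidality hypothesis on $\alpha$ is indispensable. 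An analogous (easier) computation on unitors finishes the check, so $(\cT,\widetilde{\cF}^\sZ,\id)$ is a genuine strict $\cV$-fibered enrichment.

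Finally I exhibit the equivalence in $\cV\FibFusCat$. Take $H=\id_\cT$; the braided monoidal natural isomorphism $f$ constructed above descends under $\Forget_Z$ to the action-coherence datum relating $\cF$ and $\Forget_Z\circ\widetilde{\cF}^\sZ=i_\cT\circ F$, and this is exactly the isomorphism $\eta$ forced by \eqref{eq:FibFusCat1MorphismCoherence2}. Thus $(\id_\cT,\eta)$ is a $1$-morphism $(\cT,\cF^\sZ,\alpha)\to(\cT,\widetilde{\cF}^\sZ,\id)$; since $\id_\cT$ is an equivalence and $\eta$ is invertible, it is an equivalence, with quasi-inverse $(\id_\cT,\eta^{-1})$. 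Hence every object of $\cV\FibFusCat$ is equivalent to a strict one, the inclusion is essentially surjective, and the two $2$-categories are equivalent.
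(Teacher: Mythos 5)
Your proposal is correct and follows essentially the same route as the paper: transport $\cF^\sZ$ along $\alpha$ to a functor with underlying object $F(v)\otimes 1_\cT$, and exhibit the equivalence as $(\id_\cT,\eta)$ with $\eta$ forced by \eqref{eq:FibFusCat1MorphismCoherence2}. The only differences are cosmetic --- you invoke the discrete-fibration property of $\Forget_Z\colon\core(Z(\cT))\to\core(\cT)$ where the paper just writes the transported half-braiding $\sigma'_{t,\cF'(v)}=(\alpha_v^{-1}\otimes\id_t)\circ\sigma\circ(\id_t\otimes\alpha_v)$ and tensorator explicitly, and you spell out the check (glossed in the paper as ``by definition'') that monoidality of $\alpha$ makes the transported tensorator agree on the nose with that of $i_\cT\circ F$.
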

\begin{proof}
Suppose $(\cT,\cF^\sZ, \alpha) \in \cV\FibFusCat$.
Define $(\cT, \cF'^\sZ,\id)\in \cV\FibFusCat$
by 
$\cF'(v) := F(v)\otimes 1_\cT$ and  $\sigma'_{t,\cF'(v)}:=(\alpha_v^{-1}\otimes \id_t)\circ \sigma_{\cF(z),t}\circ (\id_t\otimes \alpha_v)$ with tensorator $\mu'_{u,v}:= \alpha_{u\otimes v}^{-1}\circ \mu_{u,v}\circ (\alpha_u\otimes \alpha_v)$.
By definition, we have $\cF' :=\Forget_Z\circ \cF'^\sZ = i_\cT \circ F$ on the nose, so $(\cT,\cF'^\sZ, \id)\in\cV\FibFusCat^{\sf st}$.
We claim that $(H:=\id_\cT, \eta:=\alpha): (\cT,\cF^\sZ, \alpha) \to (\cT,\cF'^\sZ, \id)$ defines an invertible 1-morphism in $\cV\FibFusCat$.
it is clear that $\alpha^1=\alpha, \alpha^2=\id, \eta=\alpha$ satisfies \eqref{eq:FibFusCat1MorphismCoherence2}.
It remains to check that $\eta=\alpha$ satisfies the half-braiding coherence \eqref{eq:CoherenceForVModuleTensorCategoryHalfBraidings}, which expands to the formula $(\alpha_v\otimes \id_t)\circ \sigma'_{t,\cF'(v)}=\sigma_{t,\cF(z)}\circ (\id_t\otimes \alpha_v)$, which holds by definition.
\end{proof}

The following theorem shows that fully faithful $\Rep(G)$-fibered enriched fusion categories are the same as faithfully $G$-graded fusion categories.
We denote by $G\GrdFusCat_{\sf f}$ the 2-category of \emph{faithfully} $G$-graded fusion categories and by  $\Rep(G)\FibFusCat_{\sf ff}$ the 2-category of \emph{fully faithful} $\Rep(G)$-fibered enriched fusion categories, where we endow $\Rep(G)$ with the canonical symmetric fiber functor to $\Vec$.

\begin{thm}
\label{thm:GGraded2CatEquivalence}
There is a strict 2-equivalence $\Phi : G\GrdFusCat_{\sf f} \to \Rep(G)\FibFusCat_{\sf ff}$ such that the following triangle commutes:
\begin{equation}
\label{eq:GGradedTriangle}
\begin{tikzcd}
G\GrdFusCat_{\sf f}
\arrow[rr, "\Phi"]
\arrow[dr, swap, "\Forget_G"]
&&
\Rep(G)\FibFusCat_{\sf ff}
\arrow[dl, "\Forget_{\Rep(G)}"]
\\
&\FusCat
\end{tikzcd}
\end{equation}
Here, $\Forget_G: G\GrdFusCat\to \FusCat$ forgets the $G$-grading (cf.~Example \ref{ex:SetOfGGradings}), and $\Forget_{\Rep(G)}: \cV\FibFusCat \to \FusCat$ forgets the $\Rep(G)$-fibered enrichment.
\end{thm}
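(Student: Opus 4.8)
The plan is to package the object-level construction of Definition~\ref{defn:CanonicalCopyOfRepGFromGrading} into a strict $2$-functor $\Phi$ and then verify that it is essentially surjective on objects and induces equivalences on all hom-categories; only the object-level surjectivity will require non-formal input, which I will import from \cite{MR2587410}. On objects I send a faithfully $G$-graded fusion category $\cT=\bigoplus_{g\in G}\cT_g$ to the triple $(\cT,\cI^\cT,\alpha)$, where $\cI^\cT:\Rep(G)\to Z(\cT)$ is the canonical fully faithful braided functor of Definition~\ref{defn:CanonicalCopyOfRepGFromGrading} and $\alpha$ is the canonical monoidal natural isomorphism $\Forget_Z\circ\cI^\cT\Rightarrow i_\cT\circ\Forget_\Rep$ supplied by its second bullet point. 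Full faithfulness of $\cI^\cT$ places $(\cT,\cI^\cT,\alpha)$ in $\Rep(G)\FibFusCat_{\sf ff}$, and since $\Phi$ changes neither the underlying fusion category nor its identity, we have $\Forget_{\Rep(G)}\circ\Phi=\Forget_G$ on objects, so the triangle \eqref{eq:GGradedTriangle} commutes on the nose.

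On a $G$-graded monoidal $1$-morphism $H:\cT_1\to\cT_2$ I set $\Phi(H):=(H,\eta)$, with $\eta$ the monoidal natural isomorphism forced by \eqref{eq:FibFusCat1MorphismCoherence2}; the only thing to check is that this forced $\eta$ satisfies the half-braiding coherence \eqref{eq:CoherenceForVModuleTensorCategoryHalfBraidings}. This is where grading-preservation of $H$ enters: for $t\in\cT_{1,g}$ we have $H(t)\in\cT_{2,g}$, and on such homogeneous objects the canonical half-braidings of Definition~\ref{defn:CanonicalCopyOfRepGFromGrading} are given by the \emph{same} scalar $\pi_g\otimes\id$ for $\cI^{\cT_1}$ and for $\cI^{\cT_2}$, so \eqref{eq:CoherenceForVModuleTensorCategoryHalfBraidings} at $v=\pi\in\Rep(G)$ reduces to naturality of $H$ and the explicit formula for $\zeta$. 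On $2$-morphisms $\Phi$ is the identity: a monoidal natural transformation of $G$-graded functors is exactly an arbitrary monoidal natural transformation of the underlying functors, and by the computation following \eqref{eq:CoherenceForVModuleTensorCategory2Morphisms} in Definition~\ref{defn:fibered-enrichment} this is automatically a $2$-morphism in $\Rep(G)\FibFusCat$. Because $\Phi$ acts as the identity on the underlying fusion categories, functors, and transformations while adding only canonically determined data, it is strict and the triangle commutes strictly.

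For the local equivalence, fix faithfully $G$-graded $\cT_1,\cT_2$. On $2$-morphisms $\Phi$ is the identity, hence a bijection, so it is fully faithful on $2$-morphisms. For essential surjectivity on $1$-morphisms, let $(H,\eta):\Phi(\cT_1)\to\Phi(\cT_2)$ be any $1$-morphism of $\Rep(G)$-fibered enrichments. Applying the half-braiding coherence \eqref{eq:CoherenceForVModuleTensorCategoryHalfBraidings} at $v=\pi\in\Rep(G)$ forces $H$ to intertwine the canonical half-braidings $\zeta^{\cT_1}$ and $\zeta^{\cT_2}$; since by Definition~\ref{defn:CanonicalCopyOfRepGFromGrading} the degree of a homogeneous object is recovered from the scalars $\pi_g$ appearing in its half-braidings with the $\cI_\pi$ (Lemma~\ref{lem:LandInTrivialComponent} being the case $g=e$), this forces $H(\cT_{1,g})\subseteq\cT_{2,g}$, i.e.\ $H$ is $G$-graded. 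As $\eta$ is then the unique isomorphism determined by \eqref{eq:FibFusCat1MorphismCoherence2}, we get $\Phi(H)=(H,\eta)$ exactly, so $\Phi$ is essentially surjective on $1$-morphisms and therefore induces an equivalence of hom-categories.

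The remaining and deepest point is essential surjectivity on objects, which I expect to be the main obstacle. Given $(\cT,\cF^\sZ,\alpha)\in\Rep(G)\FibFusCat_{\sf ff}$, Lemma~\ref{lem:StrictifyFiberedEnrichment} lets me replace it by an equivalent object with $\alpha=\id$, so that $\cF^\sZ:\Rep(G)\to Z(\cT)$ is a fully faithful braided functor with $\Forget_Z\circ\cF^\sZ=i_\cT\circ\Forget_\Rep$ on the nose. Its image is a Tannakian copy of $\Rep(G)$ inside $Z(\cT)$ landing in $\langle 1_\cT\rangle$ under $\Forget_Z$, and this is precisely the data that \cite{MR2587410} shows is equivalent to a faithful $G$-grading $\cT=\bigoplus_g\cT_g$, under which $\cF^\sZ$ is identified with the canonical $\cI^\cT$ (full faithfulness of $\cF^\sZ$ corresponding to faithfulness of the grading, i.e.\ all $\cT_g\neq 0$). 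Thus $(\cT,\cF^\sZ,\id)=\Phi\!\left(\bigoplus_g\cT_g\right)$ up to the equivalence of Lemma~\ref{lem:StrictifyFiberedEnrichment}. Combined with the previous paragraph, $\Phi$ is a strict $2$-functor that is essentially surjective on objects and a local equivalence, hence a strict $2$-equivalence. The one non-formal ingredient is the citation to \cite{MR2587410}; everything else is dictated by the coherences already recorded in Definition~\ref{defn:fibered-enrichment}.
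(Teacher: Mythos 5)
Your overall architecture — object/1-morphism/2-morphism construction of $\Phi$, strictification via Lemma~\ref{lem:StrictifyFiberedEnrichment}, identity on 2-morphisms, and recovering gradedness of a functor $H$ from the half-braiding coherence \eqref{eq:CoherenceForVModuleTensorCategoryHalfBraidings} — matches the paper's proof closely. Your argument that a 1-morphism $(H,\eta):\Phi(\cT_1)\to\Phi(\cT_2)$ must have $H$ graded, because the degree of a homogeneous object is detected by the scalars $\pi_g$ in the canonical half-braidings (the generalization of Lemma~\ref{lem:LandInTrivialComponent} from $e$ to arbitrary $g$, using e.g.\ the regular representation to separate group elements), is sound and is essentially the paper's argument, which phrases the same detection via $\cO(G)$-actions and the idempotents $\chi^t_g$. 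Your parenthetical that full faithfulness of $\cF^\sZ$ corresponds to faithfulness of the grading is also correct.

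The genuine gap is exactly where you predicted the ``main obstacle'' would be, namely essential surjectivity on objects, and your proposed fix does not work: \cite{MR2587410} does not contain the statement you attribute to it. What that reference provides (and what Definition~\ref{defn:CanonicalCopyOfRepGFromGrading} cites from its p.~12) is only the \emph{forward} direction: a faithful $G$-grading yields the canonical fully faithful central copy $\cI^\cT:\Rep(G)\to Z(\cT)$ lying over $\langle 1_\cT\rangle$. The converse — that a fully faithful fibered embedding $\cF^\sZ:\Rep(G)\to Z(\cT)$ with $\Forget_Z\circ\cF^\sZ=i_\cT\circ F$ induces a faithful $G$-grading recovering the enrichment — is precisely the step the paper says it could not locate in the literature and therefore proves from scratch. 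The paper's argument is the content you are missing: identify $\cO(G)$ with the coend $\bigoplus_{(K,\pi)\in\Irr(\Rep(G))}(K,\pi)^*\otimes(K,\pi)$ as Hopf algebras \eqref{eq:CoendIsO(G)}; use the half-braidings of the enrichment to define a unital algebra map $\cO(G)\to\cT(t\to t)$ for each $t$ as in \eqref{eq:CanonicalO(G)Inclusion}; extract the commuting idempotents $\chi^t_g$ (Lemma~\ref{lem:CanonicalO(G)Projectors}) and set $t_g:=\im(\chi^t_g)$ to obtain the grading of $\cT$ as a semisimple category; and — the step with genuine content — verify multiplicativity of the grading, $\cT_g\otimes\cT_h\subseteq\cT_{gh}$, via compatibility of the $\cO(G)$-action with comultiplication \eqref{eq:HopfActionComultiplicationCompatibility}, which yields $\chi^{st}_g=\sum_h \chi^s_{gh}\otimes\chi^t_{h^{-1}}$. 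One must then also check that the constructed grading recovers the given enrichment up to equivalence. De-equivariantization-style results for Tannakian subcategories of $Z(\cT)$ do not substitute directly here either, since the fibered condition (image landing in $\langle 1_\cT\rangle$ on the nose) is exactly the extra hypothesis whose consequences must be extracted; without supplying this construction or a correct citation, your proof of object-level essential surjectivity is incomplete.
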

\begin{proof}
We already saw in Definition \ref{defn:CanonicalCopyOfRepGFromGrading} how to endow a faithfully $G$-graded fusion category $\cT$ with a fully faithful $\Rep(G)$-fibered enrichment.
It is straightforward to show that every $G$-graded monoidal functor $H:\cT_1\to \cT_2$ gives a 1-morphism in $\Rep(G)\FibFusCat$, since the coherence $\eta$ is completely determined by \eqref{eq:FibFusCat1MorphismCoherence}.
Indeed, the verification that this determined $\eta$ satisfies the compatibility \eqref{eq:CoherenceForVModuleTensorCategoryHalfBraidings} with the half-braidings amounts to the following commuting square for a homogeneous $t_g\in (\cT_1)_g$ and $(K,\pi)\in \Rep(G)$:
$$
\begin{tikzcd}
K \otimes H(t_g)
\arrow[d,"\pi_g \otimes \id_{H(t_g)}"]
\arrow[r,"\cong "]
&
H(K\otimes t_g)
\arrow[d,"H(\pi_g \otimes \id_{t_g})"]
\\
K\otimes H(t_g)
\arrow[r,"\cong"]
&
H(K\otimes t_g)
\end{tikzcd}.
$$
Moreover, the 2-morphisms of both $G\GrdFusCat_{\sf f}$ and $\Rep(G)\FibFusCat_{\sf ff}$ consist of \emph{all} monoidal natural transformations, so $\Phi$ is the identity on 2-morphisms.
We leave it to the reader to check that $\Phi$ is a strict 2-functor which is obviously fully faithful on 2-morphisms such that \eqref{eq:GGradedTriangle} commutes.

It remains to show essential surjectivity on objects and 1-morphisms.
By Lemma \ref{lem:StrictifyFiberedEnrichment}, we may restrict our attention to the 2-subcategory $\Rep(G)\FibFusCat_{\sf ff}^{\sf st}$ of fully faithful $\Rep(G)$-fibered enriched fusion categories $(\cT,
\cI^{\scriptscriptstyle Z})$ such that $\Forget_Z\circ \cI^\sZ = i_\cT \circ F$ on the nose.

Given $(\cT,\cI^{\scriptscriptstyle Z})\in \Rep(G)\FibFusCat_{\sf ff}^{\sf st}$, we claim there is a canonical faithful $G$-grading on $\cT$ which recovers our $\Rep(G)$-fibered enrichment.
We expect this result is known to experts, but we are unaware of its existence in the literature.

Recall $\cO(G)$ is the commutative algebra of $\Bbbk$-valued functions on $G$. Moreover, $\cO(G)$ is
a Hopf algebra with comultiplication given by $\Delta(\chi_g) := \sum_h \chi_{gh^{-1}}\otimes
\chi_h$ where $\chi_g$ denotes the indicator function at $g\in G$, antipode given by $S\chi_g :=
\chi_{g^{-1}}$, and counit given by $\epsilon(\chi_g) = \delta_{g=e}$. Let $\Irr(\Rep(G))$ be a set
of representatives for the simple objects of $\Rep(G)$. There is a unital isomorphism of Hopf
algebras
\begin{equation}
\label{eq:CoendIsO(G)}
\Phi:
\bigoplus_{(K,\pi) \in \Irr(\Rep(G))} (K,\pi)^* \otimes (K,\pi)
\cong
\cO(G)
\end{equation}
given on $w^*\otimes v \in (K,\pi)^* \otimes (K,\pi)$ by
$\Phi(w^* \otimes v)(g) :=  w^*(\pi_g(v))$.
Multiplication on the left hand side is given on
$w_i^*\otimes v_i \in K_i^*\otimes K_i$
for $i=1,2$
by
$$
(w_1^*\otimes v_1)(w_2^*\otimes v_2*)
=
\sum_{
\substack{
(L,\pi)\in \Irr(\Rep(G))
\\
\{\alpha\}
\subset
\Rep(G)(K_1 \otimes K_2 \to L)
}}
[(w_1^*\otimes w_2^*)\circ \alpha^*]
\otimes
[\alpha\circ (v_1\otimes v_2)]
\qquad\qquad
$$
where $\{\alpha\} \subseteq \Rep(G)(K_1 \otimes K_2 \to L)$ is a basis and $\{\alpha^*\} \subset
\Rep(G)(L \to K_1 \otimes K_2)$ is the dual basis under the pairing $\alpha' \circ \alpha^* =
\delta_{\alpha'=\alpha} \id_L$. The unit on the left hand side is exactly $1_\bbC^* \otimes
1_\bbC\in \bbC^*\otimes \bbC$ where $\bbC \in\Rep(G)$ is the trivial representation.
Comultiplication on $w^*\otimes v \in K^*\otimes K$ is given by
$$
\Delta(w^*\otimes v)
=
\sum_{i} (w^*\otimes e_i) \otimes (e_i^* \otimes v)
$$
where $\{e_i\}$ is a basis for $K$ and $\{e_i^*\}$ is the dual basis.
We will identify both sides of \eqref{eq:CoendIsO(G)} under the isomorphism $\Phi$ below.

Now given $t\in \cT$, we get a unital $\Bbbk$-algebra homomorphism
$\cO(G) \to \cT(t\to t)$ (whose image lies in $Z(\cT(t\to t))$)
whose image on $w^*\otimes v \in K^*\otimes K$ is given by
\begin{equation}
\label{eq:CanonicalO(G)Inclusion}
w^*\otimes v
\longmapsto
\begin{tikzpicture}[baseline=-.1cm, xscale=-1]
  \begin{knot}[clip width=3]
    \strand (0,-.7) -- (0,.7);
    \strand (-.4,-.2) -- (.4,.2);
  \end{knot}
	\roundNbox{fill=white}{(-.6,-.2)}{.25}{0}{0}{$v$}
	\roundNbox{fill=white}{(.6,.2)}{.25}{0}{0}{$w^*$}
	\node at (.2,-.5) {\scriptsize{$t$}};
	\node at (-.2,.2) {\scriptsize{$\cI_\pi$}};
\end{tikzpicture}
:=
(w^*\otimes \id_t)
\circ
\zeta_{I_\pi, t}
\circ
(\id_t \otimes v)
\end{equation}
where we identify elements $v\in K$ as morphisms $v:\Bbbk \to K$, which gives a map $v \in
\cT(1_{\cT} \to 1_\cT\otimes K = \cI_\pi),$ and similarly $w^* \in\cT(\cI_\pi = 1_\cT \otimes K \to
1_\cT)$. Now $\cO(G)\cong \Bbbk^{|G|}$ is an abelian $\Bbbk$-algebra, so for each $t\in \cT$ and
$g\in G$, we have a canonical projector $\chi_g^t \in \cT(t\to t)$. 
The proof of the following lemma
is straightforward.

\begin{lem}
\label{lem:CanonicalO(G)Projectors}
For $t\in \cT$, the projectors $\chi^t_g \in \cT(t\to t)$ satisfy the relations
\begin{itemize}
\item
(direct sum)
$\chi^t_g \circ \chi^t_h = \delta_{g=h} \chi^t_g$ and $\sum_{g\in G} \chi^t_g = \id_t$, and
\item
(compatibility with morphisms)
for all $s\in \cT$ with projectors $\chi^s_g\in \cT(s\to s)$ and all morphisms $f\in \cT(s\to t)$, we have
$\chi_g^s \circ f = f\circ \chi^t_g$.
\end{itemize}
\end{lem}

As $\cT$ is fusion and thus idempotent complete, for $g\in G$, we may define $t_g := \im(\chi^t_g)$.
By the direct sum relation in Lemma \ref{lem:CanonicalO(G)Projectors} we have $t = \bigoplus_{g\in
G} t_g$. Moreover, for all $f\in \cT(s\to t)$, we see that $\cT(s \to t) = \bigoplus_{g\in G}
\cT(s_g \to t_g)$. Thus defining $\cT_g$ to be the subcategory whose objects are of the form $t_g$
for $t\in \cT$, we have $\cT = \bigoplus_{g\in G} \cT_g$, i.e., $\cT$ is $G$-graded as a semisimple
category.

We now claim that this $G$-grading is compatible with the tensor product, i.e., if $s\in \cT_g$ and
$t \in \cT_h$, then $s \otimes t \in \cT_{gh}$. To show this, we observe that the map
\eqref{eq:CanonicalO(G)Inclusion} endows each hom space $\cT(s\to t)$ with an $\cO(G)$-\emph{action}
\begin{equation*}
(w^*\otimes v) \vartriangleright f :=
\begin{tikzpicture}[baseline=-.6cm, xscale=-1]
  \begin{knot}[clip width=3]
    \strand (0,-1.5) -- (0,.7);
    \strand (-.4,-.2) -- (.4,.2);
  \end{knot}
	\roundNbox{fill=white}{(0,-.8)}{.25}{0}{0}{$f$}
	\roundNbox{fill=white}{(-.6,-.2)}{.25}{0}{0}{$v$}
	\roundNbox{fill=white}{(.6,.2)}{.25}{0}{0}{$w^*$}
	\node at (.2,-1.3) {\scriptsize{$s$}};
	\node at (.2,-.3) {\scriptsize{$t$}};
	\node at (-.2,.2) {\scriptsize{$\cI_\pi$}};
\end{tikzpicture}
\end{equation*}
such that
\begin{equation}
\label{eq:HopfActionMultiplicationCompatibility}
(w_1^*\otimes v_1)(w_2^*\otimes v_2) \vartriangleright f
=
(w_1^*\otimes v_1)\vartriangleright(w_2^*\otimes v_2) \vartriangleright f
\qquad\qquad
\forall f\in \cT(s\to t).
\end{equation}
Since for all $s,t\in \cT$,
$$
\begin{tikzpicture}[baseline=-.1cm, xscale=-1]
  \begin{knot}[clip width=3]  
    \strand (-.1,-1) -- (-.1,1);
    \strand (.1,-1) -- (.1,1);
    \strand (-.6,-.4) -- (.6,.4);
  \end{knot}
	\roundNbox{fill=white}{(-.8,-.6)}{.25}{0}{0}{$v$}
	\roundNbox{fill=white}{(.8,.6)}{.25}{0}{0}{$w^*$}
	\node at (-.3,-.8) {\scriptsize{$s$}};
	\node at (.3,-.8) {\scriptsize{$t$}};
	\node at (-.6,-.1) {\scriptsize{$\cI_\pi$}};
\end{tikzpicture}
=
\sum_{i}
\begin{tikzpicture}[baseline=-.1cm, xscale=-1]
  \begin{knot}[clip width=3]  
    \strand (-.9,-1) -- (-.9,1);
    \strand (.9,-1) -- (.9,1);
    \strand (-1.4,-.6) -- (-.4,-.2);
  	\strand (1.4,.6) -- (.4,.2);
  \end{knot}
	\roundNbox{fill=white}{(-1.4,-.6)}{.25}{0}{0}{$v$}
	\roundNbox{fill=white}{(-.4,-.2)}{.25}{0}{0}{$e_i^*$}
	\roundNbox{fill=white}{(.4,.2)}{.25}{0}{0}{$e_i$}
	\roundNbox{fill=white}{(1.4,.6)}{.25}{0}{0}{$w^*$}
	\node at (-1.1,.4) {\scriptsize{$s$}};
	\node at (1.1,-.4) {\scriptsize{$t$}};
\end{tikzpicture}
,
$$
our $\cO(G)$-action satisfies
\begin{equation}
\label{eq:HopfActionComultiplicationCompatibility}
(-\otimes_\cT-)\circ\Delta(w^*\otimes v)\vartriangleright(f_1 \otimes_\Bbbk f_2)
=
(w^*\otimes v) \vartriangleright (f_1\otimes f_2)
\qquad
\forall
f_1\in \cT(s_1\to t_1),\,\,
f_2\in \cT(s_2\to t_2).
\end{equation}
This immediately implies that the idempotent $\chi^{st}_g \in \cT(st \to st)$ decomposes as
$$
\chi^{st}_g
=
\sum_{h\in G} \chi^s_{gh} \otimes \chi^t_{h^{-1}}
\qquad
\qquad
\Longrightarrow
\qquad\qquad
\chi^{st}_{gh} \circ (\chi^s_g \otimes \chi^t_h) =\chi^s_g \otimes \chi^t_h
\qquad
\forall g,h\in G.
$$
Thus the $G$-grading on $\cT$ respects the tensor product of $\cT$.
We leave it to the reader to verify this $G$-grading recovers an equivalent $\Rep(G)$-fibered enrichment on $\cT$.

Finally, we show essential surjectivity on 1-morphisms.
Suppose $(H,\eta): (\cT_1,\cI^\sZ_1)\to (\cT_2,\cI^\sZ_2)$ is a 1-morphism in $\cV\FibFusCat_{\sf ff}^{\sf st}$.
It suffices to prove that $H$ is $G$-graded, since $\eta$ is completely determined by $H$ by \eqref{eq:FibFusCat1MorphismCoherence2}.
By using the compatibility of $\eta$ with the half-braidings \eqref{eq:CoherenceForVModuleTensorCategoryHalfBraidings}, we see that $H$ intertwines the $\cO(G)$-actions \eqref{eq:CanonicalO(G)Inclusion} on $\cT_1(t\to t)$ and $\cT_2(H(t)\to H(t))$ for all $t\in \cT_1$.
Thus $H$ maps $\chi_g^t \in \cT_1(t\to t)$ to $\chi_g^{H(t)} \in \cT_2(H(t) \to H(t))$, and $H$ is $G$-graded.
\end{proof}


We now look at the equivalent 2-subcategory $\Rep(G)\FibFusCat^{\sf st}_{\sf ff}$ such that $\Forget_Z\circ \cI^\sZ=i_\cT\circ F$ on the nose.
On this 2-subcategory, the restriction to cores of the forgetful 2-functor $\Forget_{\Rep(G)}$ is a discrete fibration, since given any fully faithful $\Rep(G)$-fibered enriched fusion category $(\cT_1, \cI_1^\sZ)$ such that $\Forget_Z\circ \cI_1^\sZ =i_{\cT_1}\circ F$ and any monoidal equivalence $H: \cT_1 \to \cT_2$, there is a unique lift $\cI^\sZ_2: \Rep(G) \to Z(\cT_2)$ such that $\Forget_Z\circ \cI^\sZ_2 = i_{\cT_2}\circ F$, which completely determines the necessary action coherence morphism $\eta$ to make $(H,\eta)$ an invertible 1-morphism in $\Rep(G)\FibFusCat^{\sf st}_{\sf ff}$.
We conclude that the
0-truncation of the homotopy fiber $\tau_0(\hoFib_\cT(\Forget_{\Rep(G)}))$ of the forgetful 2-functor $\Forget_{\Rep(G)}:\core(\Rep(G)\FibFusCat^{\sf st}_{\sf ff})\to \core(\FusCat)$
is in canonical bijection with the strict fiber $\stFib_\cT(\Forget_{\Rep(G)})$, which we view as the \emph{set of $\cV$-fibered enrichments of $\cT$}.


\begin{cor}\label{cor:fiberedenrichments}
Fix a fusion category $\cT$ and a finite group $G$,
and denote by $F: \Rep(G) \to \Vec = \langle 1_\cT\rangle \subset \cT$ the canonical symmetric fiber functor.
There is a bijective correspondence between the sets of 
\begin{enumerate}
\item
Fully faithful $\Rep(G)$-fibered enrichments $\cI^{\sZ}: \Rep(G) \to Z(\cT)$ such that $\Forget_Z \circ \cI^{\sZ} = i_{\cT}\circ F$ on the nose, where $i_{\cT}: \Vec \hookrightarrow \cT$ is the canonical inclusion $V\mapsto V\otimes 1_\cT$, and
\item
the set of faithful $G$-gradings on $\cT$ (cf.~Example \ref{ex:SetOfGGradings}).
\end{enumerate}
\end{cor}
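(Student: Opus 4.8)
The plan is to deduce the stated bijection formally from the strict $2$-equivalence $\Phi$ of Theorem \ref{thm:GGraded2CatEquivalence}, combined with the homotopy-fiber formalism of \S\ref{sec:TruncationOfHomotopyFibers}; this is exactly the composite-of-bijections strategy advertised in the introduction. First I would recognise each of the two sets as a strict fiber and then replace it by the $0$-truncation of the corresponding homotopy fiber. By Example \ref{ex:SetOfGGradings}, Set (2) of faithful $G$-gradings on $\cT$ is the strict fiber $\stFib_\cT(\Forget_G)$ of $\Forget_G\colon \core(G\GrdFusCat_{\sf f}) \to \core(\FusCat)$, and since this restriction to cores is a discrete fibration, the strictification result for $2$-groupoids in \S\ref{sec:StrictifyFibersFor2Groupoids} gives a canonical bijection $\stFib_\cT(\Forget_G) \cong \tau_0(\hoFib_\cT(\Forget_G))$. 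By the discussion immediately preceding this corollary, Set (1) is the strict fiber $\stFib_\cT(\Forget_{\Rep(G)})$ of $\Forget_{\Rep(G)}\colon \core(\Rep(G)\FibFusCat_{\sf ff}^{\sf st}) \to \core(\FusCat)$, whose restriction to cores is likewise a discrete fibration, so that $\stFib_\cT(\Forget_{\Rep(G)}) \cong \tau_0(\hoFib_\cT(\Forget_{\Rep(G)}))$.

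The heart of the argument is then to compare the two homotopy fibers using $\Phi$. By Lemma \ref{lem:StrictifyFiberedEnrichment} the strict subcategory $\Rep(G)\FibFusCat_{\sf ff}^{\sf st}$ is equivalent to $\Rep(G)\FibFusCat_{\sf ff}$, so I may regard $\Phi$ as a strict $2$-equivalence $G\GrdFusCat_{\sf f} \to \Rep(G)\FibFusCat_{\sf ff}^{\sf st}$. Since the triangle \eqref{eq:GGradedTriangle} commutes, $\Phi$ strictly intertwines the two forgetful functors over the common base $\FusCat$, i.e.\ $\Forget_{\Rep(G)}\circ \Phi = \Forget_G$. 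An equivalence of $2$-groupoids that strictly commutes with two forgetful functors to a common base induces an equivalence of their homotopy fibers over each object; applied at $\cT$ this yields $\hoFib_\cT(\Forget_G) \simeq \hoFib_\cT(\Forget_{\Rep(G)})$, and hence a canonical bijection $\tau_0(\hoFib_\cT(\Forget_G)) \cong \tau_0(\hoFib_\cT(\Forget_{\Rep(G)}))$ after taking $0$-truncations. Composing the three bijections produces
\[
\stFib_\cT(\Forget_G)
\cong
\tau_0(\hoFib_\cT(\Forget_G))
\cong
\tau_0(\hoFib_\cT(\Forget_{\Rep(G)}))
\cong
\stFib_\cT(\Forget_{\Rep(G)}),
\]
which is precisely the asserted correspondence between Set (2) and Set (1).

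The genuinely substantive input has already been supplied by Theorem \ref{thm:GGraded2CatEquivalence} --- in particular the matching of \emph{faithful} $G$-gradings with \emph{fully faithful} $\Rep(G)$-fibered enrichments under $\Phi$, together with the commuting triangle \eqref{eq:GGradedTriangle}. The main obstacle I expect in writing this out is therefore not the construction itself but the formal verification that a strict $2$-equivalence covering the identity on the base induces an equivalence on homotopy fibers (and hence a bijection on $0$-truncations); this is the one step that goes slightly beyond the cited propositions, though it is a standard consequence of the fibration sequence $\hoFib \to \Path \to \FusCat$ underlying Proposition \ref{prop:TruncatedGroupoids}. Everything else is bookkeeping within the truncation formalism already established in \S\ref{sec:TruncationOfHomotopyFibers}.
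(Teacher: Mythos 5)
Your proposal is correct and takes essentially the same route as the paper's own proof: the identical chain of bijections $\stFib_\cT(\Forget_G)\cong\tau_0(\hoFib_\cT(\Forget_G))\cong\tau_0(\hoFib_\cT(\Forget_{\Rep(G)}))\cong\stFib_\cT(\Forget_{\Rep(G)})$, obtained from the commuting triangle \eqref{eq:GGradedTriangle} of Theorem \ref{thm:GGraded2CatEquivalence} (strictified via Lemma \ref{lem:StrictifyFiberedEnrichment}) together with the discrete-fibration facts from Example \ref{ex:SetOfGGradings} and the discussion preceding the corollary. The one step you single out for verification --- that an equivalence strictly covering the identity on the base induces an equivalence of homotopy fibers --- is precisely the step the paper also invokes without further elaboration.
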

\begin{proof}
The equivalence $\Phi : G\GrdFusCat_{\sf f} \to \Rep(G)\FibFusCat_{\sf ff}$
such that the triangle \eqref{eq:GGradedTriangle} commutes
gives an equivalence when restricted to cores such that the obvious triangle of cores commtes.
This gives an equivalence of the 0-truncated homotopy fibers over $\cT$
of
$\Forget_G: \core(G\GrdFusCat_{\sf f}) \to \core(\FusCat)$
and
$\Forget_{\Rep(G)}:\core(\Rep(G)\FibFusCat^{\rm st}_{\sf ff})\to \core(\FusCat)$.
Since both $\Forget_G$ and $\Forget_{\Rep(G)}$ restricted to cores are fully faithful and discrete fibrations (the former by Example \ref{ex:SetOfGGradings} and the latter by the discussion before the corollary),
we get canonical bijections
$$
\stFib_\cT(\Forget_G)
\cong
\tau_0(\hoFib_\cT(\Forget_G) )
\cong
\tau_0(\hoFib_\cT(\Forget_{\Rep(G)}) )
\cong
\stFib_\cT(\Forget_{\Rep(G)}).
$$
This completes the proof.
\end{proof}


\begin{remark}
By \cite[Cor.~3.6.6]{MR3242743} $G$-gradings on a fusion category are also classified by surjective group homomorphisms from the \emph{universal grading group} $U$ to $G$.
\end{remark}

With these results in hand, we make the following definition.

\begin{defn}
A faithfully $G$-\emph{graded} $\cV$-fusion category is a $\cV$-fusion category $(\cD, \cF^{\scriptscriptstyle Z}_\cD)$
such that $\cD$ is faithfully $G$-graded as an ordinary fusion category, and $\cF_\cD^{\sZ}(\cV)\subseteq \Rep(G)' \subset Z(\cD)$.

A $G$-\emph{extension} of a $\cV$-fusion category $(\cC, \cF^{\scriptscriptstyle Z}_\cC)$ is a faithfully $G$-graded $\cV$-fusion category $(\cD, \cF^{\scriptscriptstyle Z}_\cD)$ together with an equivalence of $\cV$-fusion categories $(\cC,\cF_\cC^\sZ) \cong (\cD_e, \cF_\cD^\sZ)$
(recall $(\Forget_Z\circ \cF_\cD^\sZ)(\cV) \subseteq \cD_e$ by Lemma \ref{lem:LandInTrivialComponent}).
\end{defn}

We close this section with the following observation about $\Rep(G)$-fibered enrichments.
Given a fully faithful braided tensor functor $\Rep(G) \to Z(\cC)$ where $\cC$ is a fusion category, it is not necessarily the case that $\cC$ is $G$-graded.
For example, taking $\cC=\Rep(G)$, the universal grading group of $\cC$ is $\widehat{Z(G)}$.
Note that this enrichment is as far as possible from a $\Rep(G)$-fibered enrichment, since postcomposing the enrichment with the forgetful functor yields an equivalence.
However, $\Rep(G)$ is Morita equivalent to $\Vec(G)$, the quintessential example of a $G$-graded fusion category.
Our next result shows this behavior is generic. 
The proposition below shows that any fusion category with a $\Rep(G)$ enrichment is Morita equivalent to a $G$-graded fusion category whose associated $\Rep(G)$ enrichment (obtained from the canonical equivalence of centers) is fibered. 
This can be interpreted as a partial converse to Corollary \ref{cor:fiberedenrichments}.

\begin{prop}
Suppose $\cC$ is a fusion category and $F: \Rep(G) \to Z(\cC)$ is a fully faithful tensor functor.
Then there exists a faithfully $G$-graded fusion category $\cD$ which is Morita equivalent to $\cC$
such that the associated enrichment $\Rep(G) \to Z(\cC) \cong Z(\cD)$ is a $\Rep(G)$-fibered
enrichment.
\end{prop}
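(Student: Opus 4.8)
The plan is to realize $\cD$ as the category of modules over a carefully chosen Lagrangian algebra in $Z(\cC)$, selected so that the given copy of $\Rep(G)$ is trivialized by the resulting forgetful functor. Throughout I use that $F$ is braided (as in the preceding discussion, and as is forced by the notion of enrichment), so that its image is a Tannakian subcategory $\cE := F(\Rep(G)) \subseteq Z(\cC)$ with fiber functor $F$. Transporting the regular algebra $\cO(G) \in \Rep(G)$ gives a connected \'etale algebra $A := F(\cO(G)) \in Z(\cC)$. By the basic theory of de-equivariantization \cite{MR3242743}, the free-module functor $Z(\cC) \to Z(\cC)_A$ sends every object of $\cE$ to a multiple of the unit, since $A$ is the regular algebra of $\cE$ and $\cE \hookrightarrow Z(\cC)$ is monoidal and fully faithful.

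The key step is to produce a \emph{Lagrangian} algebra $L \in Z(\cC)$ admitting an algebra map $A \to L$. For this I would pass to the category of local modules $Z(\cC)_A^{\loc}$, which is a non-degenerate braided fusion category Witt equivalent to $Z(\cC)$ by \cite{MR3039775}. Since $Z(\cC)$ is a Drinfeld center it lies in the trivial Witt class, hence so does $Z(\cC)_A^{\loc}$; in particular $Z(\cC)_A^{\loc}$ admits a Lagrangian algebra $L_0$. Under the correspondence between connected \'etale algebras in $Z(\cC)$ containing $A$ and connected \'etale algebras in $Z(\cC)_A^{\loc}$ \cite{MR3039775} (which matches Lagrangian algebras with Lagrangian algebras, as the Frobenius--Perron dimensions scale correctly), $L_0$ pulls back to a Lagrangian algebra $L \in Z(\cC)$ together with an algebra map $A \to L$.

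With $L$ in hand I set $\cD := Z(\cC)_L$, the fusion category of $L$-modules. Because $L$ is Lagrangian, $\cD$ is a fusion category whose center is braided equivalent to $Z(\cC)$; equivalently $L$ corresponds to an indecomposable $\cC$-module category $\cM$ with $\cD \cong \cC^*_\cM$, so $\cD$ is Morita equivalent to $\cC$ \cite{MR3039775}, and under $Z(\cD) \cong Z(\cC)$ we transport $F$ to the claimed enrichment $\Rep(G) \to Z(\cD)$. Since $L$ is an algebra in $Z(\cC)_A$, we may identify $\cD \cong (Z(\cC)_A)_L$, so $\Forget_Z^{\cD} : Z(\cC) \to \cD$ factors through $Z(\cC)_A$; as the first factor already trivializes $\cE$, the composite $\Forget_Z^{\cD}\circ F$ lands in $\langle 1_\cD\rangle = \Vec$. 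Thus $F$ is a fully faithful $\Rep(G)$-fibered enrichment of $\cD$, and Corollary \ref{cor:fiberedenrichments} converts it into a faithful $G$-grading on $\cD$ whose associated enrichment is exactly this fibered one.

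The main obstacle is the middle step: manufacturing a Lagrangian algebra $L$ through which $A$ factors. Everything there rests on the Witt-triviality of $Z(\cC)_A^{\loc}$, inherited from that of the center $Z(\cC)$, which is what guarantees the auxiliary Lagrangian algebra $L_0$ needed to build $L$; without the centre hypothesis this could genuinely fail, since a general non-degenerate braided fusion category need not admit any Lagrangian algebra. The remaining verifications---Morita equivalence from the coincidence of centres, the factorization of the forgetful functor, and the passage from a fibered enrichment to a faithful grading---are then formal consequences of Corollary \ref{cor:fiberedenrichments} and the \'etale-algebra machinery of \cite{MR3039775}.
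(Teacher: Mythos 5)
Your proof is correct, but it takes a genuinely different route from the paper's. (Both arguments tacitly require $F$ to be braided so that $A:=F(\cO(G))$ is commutative; you make this explicit, the paper does not.) After the common first step, the paper never constructs a Lagrangian algebra: it observes that $Z(\cC)_A$ is a $G$-crossed braided extension of $Z(\cC)_A^{\loc}$ \cite[Thm.~8.24.3]{MR3242743}, identifies $Z(\cC)_A^{\loc}\cong Z(\cC_A)$ with the center of a (multi)fusion category by \cite{MR3039775}, and then invokes the bijection of \cite{MR2587410} between $G$-crossed braided extensions of centers and $G$-graded extensions of fusion categories to produce $\cD$ directly as a faithfully $G$-graded fusion category whose relative center is $Z(\cC)_A$; equivariantization gives $Z(\cD)\cong (Z(\cC)_A)^G\cong Z(\cC)$, hence the Morita equivalence, and fiberedness follows because the forgetful functor factors through $Z(\cC)_A$. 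You replace all of the $G$-crossed machinery with Lagrangian-algebra technology from \cite{MR3039775}: Witt triviality of $Z(\cC)_A^{\loc}$ supplies $L_0$, the standard correspondence (the underlying object of a connected \'{e}tale algebra in the local modules is connected \'{e}tale over $A$ in $Z(\cC)$, with $\FPdim$ scaling by $\FPdim(A)$, so Lagrangians match Lagrangians) supplies $L\supseteq A$, and then $\cD:=Z(\cC)_L$ has $Z(\cD)\cong Z(\cC)$ realized so that the forgetful functor becomes the free-module functor $-\otimes L$, which factors through de-equivariantization by $A$ exactly as in the paper's last line. Your route buys independence from \cite{MR2587410} and makes the Morita equivalence concrete via the Lagrangian/module-category dictionary; the cost is that you must extract the faithful $G$-grading a posteriori from the fully faithful fibered enrichment via Theorem \ref{thm:GGraded2CatEquivalence} and Corollary \ref{cor:fiberedenrichments}, whereas the paper's $\cD$ arrives already faithfully graded from the crossed-braided extension. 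One small elision, shared with the paper's equally terse conclusion: to literally match Definition \ref{defn:fibered-enrichment} one should note that the composite $\Forget_Z\circ\widetilde{F}:\Rep(G)\to\Vec$ is monoidally isomorphic to the \emph{canonical} fiber functor, which holds because de-equivariantizing the Tannakian image by its regular algebra recovers the canonical fiber functor up to monoidal natural isomorphism.
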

\begin{proof}
Consider the image of $\cO(G)$ inside $Z(\cC)$, which is a connected \'{e}tale algebra, which we
will still denote by $\cO(G)$. Observe that $Z(\cC)_{\cO(G)}$ is a $G$-crossed braided extension of
$Z(\cC)_{\cO(G)}^{\loc}$ by \cite[Thm.~8.24.3]{MR3242743}. Now note that $Z(\cC)_{\cO(G)}^{\loc}
\cong Z(\cC_{\cO(G)})$ by \cite[Thm.~3.20]{MR3039775} where $\cC_{\cO(G)}$ is a multifusion
category, and every center of a multifusion category is also the center of an ordinary fusion
category \cite[Rem.~5.2]{MR3039775}. By \cite{MR2587410}, there is a bijective correspondence
between $G$-extensions of fusion categories $\cF$ and $G$-crossed braided extensions of $Z(\cF)$
which is established by taking the relative center. Thus there is a $G$-graded fusion category $\cD$
whose relative center with respect to its trivial component is $Z(\cC)_{\cO(G)}$. Furthermore, by
\cite{MR2587410}, $Z(\cD) \cong (Z(\cC)_{\cO(G)})^G \cong Z(\cC)$. Hence $\cD$ is Morita equivalent
to $\cC$. Since the forgetful functor $Z(\cD) \to \cD$ factors through $Z(\cC)_{\cO(G)}$, the
$\Rep(G)$-enrichment for $\cD$ is fibered.
\end{proof}

\section{Lifting \texorpdfstring{$\cV$}{V}-enrichment to a fixed \texorpdfstring{$G$}{G}-extension}

For this section, we fix a braided fusion category $\cV$, a $\cV$-fusion category $(\cC, \cF^{\scriptscriptstyle Z})\in \BrFus_1(\cV \to \Vec)$.

\begin{defn}
A $\cV$-enriched $G$-extension of $(\cC, \cF^{\scriptscriptstyle Z})$ is a triple $(\cD, \widetilde{\cF}^{\scriptscriptstyle Z},\alpha)$
such that 
\begin{itemize}
\item 
$\cD = \bigoplus_{g\in G}\cD_g$ is an orginary $G$-extension of $\cC=\cD_e$,
\item
$\cF^{\scriptscriptstyle Z}: \cV \to \Rep(G)'\subset Z(\cD)$
is a $\cV$-enrichment of $\cD$ that lands in the M\"uger centralizer of the canonical copy of $\Rep(G)\subset Z(\cD)$, and
\item
$\alpha$ is a natural isomorphism
\begin{equation}
\label{eq:LiftingVEnrichment}
\begin{tikzcd}
\cV
\ar[d,swap, "\cF^{\scriptscriptstyle Z}"]
\ar[rr, dashed, "\widetilde{\cF}^{\scriptscriptstyle Z}"]
&&
\Rep(G)'
\ar[d, hook]
\\
Z(\cC)
\ar[dr, hook, swap, "i"]
\ar[rr, Rightarrow, , shorten <= 3em, shorten >= 3em, "\alpha"]
&&
Z(\cD)
\ar[dl,"\Forget_\cC"]
\\
&
Z_\cC(\cD).
\end{tikzcd}
\end{equation}
where $\Forget_\cC: Z(\cD) \to Z_\cC(\cD)$ denotes the
forgetful functor.
\end{itemize}
Observe that $\cV$-enriched $G$-extensions 
of $(\cC, \cF^{\scriptscriptstyle Z})$
form a 2-groupoid which admits an obvious forgetful 2-functor to the 2-groupoid of ordinary $G$-extensions of $\cC$ as an ordinary fusion category.
This forgetful functor is fully faithful at the level of 2-morphisms and a discrete fibration.
Hence by similar arguments to those in \S\ref{sec:StrictifyFibersFor2Groupoids}, the homotopy fiber over a fixed ordinary $G$-extension $\cD = \bigoplus_{g\in G}\cD_g$ of $\cC$ is 0-truncated and in bijective correspondence with the strict fiber over $\cD$, i.e., set of tensor functors  
$\cF^{\scriptscriptstyle Z}: \cV \to \Rep(G)'\subset Z(\cD)$
such that 
$\Forget_\cC\circ \widetilde{\cF}^{\scriptscriptstyle Z}=i\circ \cF^{\scriptscriptstyle Z}$ on the nose.
\end{defn}

\begin{remark}
Given an ordinary $G$-extension $\cD= \bigoplus_{g\in G}\cD_g$ of our $\cV$-fusion category $(\cC, \cF^{\scriptscriptstyle Z})$,
choosing a functor
$\widetilde{\cF}^{\scriptscriptstyle Z}: \cV \to \Rep(G)'\subset Z(\cD)$ in the strict fiber over $\cD$ 
is equivalent to choosing for all $v\in \cV$ coherent lifts of the half-braidings for $\cF^{\scriptscriptstyle Z}(v)$ with $\cC$ to all of $\cD$.
\end{remark}

We now 
use the ENO 
extension theory for fusion categories \cite{MR2677836}, together with the results from \cite{MR2587410}
to give several equivalent characterizations of the set of compatible $\cV$-enrichments on a fixed ordinary $G$-extension $\cD$ of our $\cV$-fusion category $(\cC, \cF^{\scriptscriptstyle Z})$.

\subsection{Classification in terms of monoidal 2-functors}
\label{sec:ClassificationByMonoidal2Functors}


\begin{defn}
\label{defn:VBrauerPicardGroupoid}
The $\cV$-\emph{Brauer-Picard 2-groupoid} $\uuBrPic^{\mathcal{V}}(\cC,\cF^{\sZ})$ of the $\cV$-fusion
category $(\cC,\cF^{\sZ})$ 
%
%
is obtained by taking the ordinary unenriched
Brauer-Picard 2-groupoid $\uuBrPic(\cC)$ and imposing extra structure.
\begin{itemize}
\item
objects in $\uuBrPic^{\mathcal{V}}(\cC,\cF^{\sZ})$ are invertible $\cC-\cC$ bimodules $\cM$
equipped with natural isomorphisms $\eta_{a,m}:m\triangleleft F_{\mathcal{C}}(a) \to
F_{\cC}(a)\triangleright m$ satisfying \eqref{eq:BimoduleFunctorEquivalence1},
\eqref{eq:BimoduleFunctorEquivalence2}, and \eqref{eq:BimoduleFunctorEquivalence3}.
\item
1-morphisms are bimodule equivalences $E : \cM \to \cN$ satisfying \eqref{eq:BimoduleFunctorProperty}.
\item
2-morphisms are all bimodule natural isomorphisms.
\end{itemize}

We now endow $\uuBrPic^\cV(\cC,\cF^{\sZ})$ with the structure of a categorical 2-group (3-group) by lifting the monoidal structure on $\uuBrPic(\cC)$.
Observe that there is an obvious forgetful 2-functor $\Forget_\cV: \uuBrPic^\cV(\cC,\cF^{\sZ}) \to \uuBrPic(\cC)$ that forgets the extra structure, and $\Forget_\cV$ is fully faithful at the level of 2-morphisms and a discrete fibration.

We now define a monoidal structure on objects in $\uuBrPic^\cV(\cC,\cF^{\sZ})$ as the relative Deligne tensor product in $\uuBrPic(\cC)$ from Definition \ref{defn:RelativeDeligneProduct}, together with the centering morphism defined from the vertical composition on 2-morphisms in $\BrFus$ from \eqref{eq:VerticalCompositionEta}.
The monoidal product of objects in $\uuBrPic(\cC)$ is defined by a universal property, so there is not just one composite; there is a contractible choice.
We observe that when these bimodules are in the image of the forgetful 2-functor $\Forget_\cV$, there is a choice of composite $\cM\boxtimes_\cD \cN$ which is also in the image of $\Forget_\cV$ by construction.
Following \cite{MR2678824}, we get an associator and pentagonator for $\uuBrPic(\cC)$ from the universal property defining the relative Deligne product.
By the universal property, these associators lift to $\uuBrPic^\cV(\cC,\cF^{\sZ})$, and since $\Forget_\cV$ is fully faithful on 2-morphisms, so does the pentagonator.
This also means the forgetful 2-functor $\Forget_\cV$ is automatically a monoidal 2-functor.
\end{defn}

\begin{remark}
We expect that $\uuBrPic^\cV(\cC,\cF^{\sZ})$ is monoidally 2-equivalent to the core of the endomorphism monoidal 2-category 
of the 1-morphism $(\cC,\cF^{\sZ}) \in \BrFus(\cV \to \Vec)$.
We leave this verification to the interested reader.
\end{remark}

Observe that there is a 2-groupoid of monoidal 2-functors $\Hom(\uuG \to \uuBrPic^\cV(\cC,\cF^\sZ))$, and this 2-groupoid admits a 2-functor $U:=(\Forget_\cV)_*$ to the 2-groupoid of monoidal 2-functors $\Hom(\uuG \to \uuBrPic(\cC))$.
By Theorem \ref{thm:ExtensionsAsMonoidal2Functors}, this latter 2-groupoid is equivalent to the 2-groupoid $\Ext(\cC,G)$ of $G$-extensions of $\cC$ as an ordinary fusion category.
Now fixing an ordinary $G$-extension $\cD$ of $\cC$, we get a corresponding monoidal 2-functor $\uupi: \uuG \to \uuBrPic(\cC)$.
Similar to Example \ref{ex:Lift2Functor}, the homotopy fiber $\hoFib_\pi(U)$ is 0-truncated, i.e., a set.
Moreover, since $U$ is a discrete fibration, this set is in bijection to the strict fiber $\stFib_\pi(U)$ whose elements are monoidal 2-functors $\uupi^\cV: \uuG \to \uuBrPic^\cV(\cC,\cF^\sZ)$ such that $\Forget_\cV \circ \uupi^\cV = \uupi$ \emph{on the nose}.
$$
\begin{tikzcd}
&\uuBrPic^\cV(\cC, \cF^\sZ)
\arrow[d,"\Forget_\cV"]
\\ 
\uuG
\arrow[ur, "\uupi^\cV"]
\arrow[r, "\uupi"]
&
\uuBrPic(\cC)
\end{tikzcd}
$$
We call this set the \emph{set of lifts} of $\uupi$ to $\uuBrPic^\cV(\cC,\cF^{\sZ})$.

We now prove a version Theorem \ref{thm:ExtensionsAsMonoidal2Functors} for $\cV$-fusion categories.

\begin{thm}
\label{thm:LiftsToEnrichedBrPic}
Let $(\cC,\cF^\sZ)$ be a $\cV$-fusion category and $\cD$ an ordinary $G$-extension of $\cC$.
Let $\uupi: \uuG \to \uuBrPic(\cC)$ be any monoidal 2-functor corresponding to $\cD$ under the equivalence of 2-groupoids $\Ext(\cC, G)\cong \Hom(\uuG \to \uuBrPic(\cC))$ afforded by Theorem \ref{thm:ExtensionsAsMonoidal2Functors}.
The set of $\cV$-enrichments
$\cF_\cD^{\sZ}: \cV \to Z(\cD)$ compatible with the enrichment $\cF^\sZ: \cV \to Z(\cC)$
is in bijective correspondence with
the set of lifts of $\uupi:\uuG \to \uuBrPic(\cC)$ to $\uuBrPic^\cV(\cC, \cF^\sZ)$.
\end{thm}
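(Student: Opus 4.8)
The plan is to exhibit both sides as strict fibers of forgetful 2-functors that are fully faithful on 2-morphisms and discrete fibrations, and then to bridge the two total 2-groupoids by a $\cV$-enriched analogue of the extension equivalence of Theorem \ref{thm:ExtensionsAsMonoidal2Functors}. By definition, the set of compatible $\cV$-enrichments is the strict fiber $\stFib_\cD(P)$ over $\cD$ of the forgetful 2-functor $P$ from the 2-groupoid of $\cV$-enriched $G$-extensions of $(\cC,\cF^\sZ)$ to the 2-groupoid $\Ext(G,\cC)$ of ordinary $G$-extensions of $\cC$, while the set of lifts is the strict fiber $\stFib_\uupi(U)$ over $\uupi$ of $U=(\Forget_\cV)_*:\Hom(\uuG \to \uuBrPic^\cV(\cC,\cF^\sZ)) \to \Hom(\uuG \to \uuBrPic(\cC))$. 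Since both $P$ and $U$ are 0-truncated discrete fibrations, the strict-fiber proposition of \S\ref{sec:StrictifyFibersFor2Groupoids} together with Proposition \ref{prop:TruncatedGroupoids} identifies each strict fiber with the $0$-truncation of the corresponding homotopy fiber, so it suffices to produce an equivalence $\hoFib_\cD(P)\simeq \hoFib_\uupi(U)$.

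I would obtain this from a square of 2-groupoids
\[
\begin{tikzcd}
\{\cV\text{-enriched }G\text{-extensions}\}
\arrow[r, "\simeq"]
\arrow[d, "P"]
&
\Hom(\uuG \to \uuBrPic^\cV(\cC,\cF^\sZ))
\arrow[d, "U"]
\\
\Ext(G,\cC)
\arrow[r, "\simeq"]
&
\Hom(\uuG \to \uuBrPic(\cC))
\end{tikzcd}
\]
commuting up to natural equivalence, whose bottom arrow is Theorem \ref{thm:ExtensionsAsMonoidal2Functors} and whose top arrow is a $\cV$-enriched refinement of it. As $\cD$ corresponds to $\uupi$ under the bottom equivalence, the horizontal equivalences induce an equivalence of homotopy fibers after passing to cores; $0$-truncating and using that $P$ and $U$ are discrete fibrations then yields
\[
\stFib_\cD(P)
\cong
\tau_0\hoFib_\cD(P)
\cong
\tau_0\hoFib_\uupi(U)
\cong
\stFib_\uupi(U),
\]
which is the asserted bijection.

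The substantive step is constructing the top equivalence and checking it covers the bottom one. In the forward direction, given a $\cV$-enriched $G$-extension $(\cD,\widetilde{\cF}^\sZ,\alpha)$ I would take the monoidal 2-functor $\uupi\colon g\mapsto \cD_g$ produced by extension theory and lift each graded piece to an object of $\uuBrPic^\cV(\cC,\cF^\sZ)$ by equipping $\cD_g$ with the centering isomorphisms $\eta^g_{a,m}:=\sigma_{m,\widetilde{\cF}^\sZ(a)}\colon m\otimes \cF(a)\to \cF(a)\otimes m$ obtained by restricting the half-braiding of $\widetilde{\cF}^\sZ(a)\in \Rep(G)'\subset Z(\cD)$ to $m\in\cD_g$, which lands in $\cC=\cD_e$ precisely by Lemma \ref{lem:LandInTrivialComponent}. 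The bimodule-centering coherences \eqref{eq:BimoduleFunctorEquivalence1}, \eqref{eq:BimoduleFunctorEquivalence2}, and \eqref{eq:BimoduleFunctorEquivalence3} are then exactly the compatibility of this half-braiding with the left and right $\cC$-actions and with the tensorator of $\widetilde{\cF}^\sZ$, while monoidality of the lift---that the centering on $\cD_g\boxtimes_\cC \cD_h\cong \cD_{gh}$ is the composite \eqref{eq:VerticalCompositionEta}---is the statement that the half-braiding with a product $m\otimes n$ factors through the half-braidings with $m$ and with $n$. Conversely, from a lift $\uupi^\cV$ one reassembles, for each $a\in\cV$, the family $\{\eta^g_{a,-}\}_{g\in G}$ into a single half-braiding on the object $\cF(a)\in\cD$, producing $\widetilde{\cF}^\sZ\colon \cV\to Z(\cD)$ landing in $\Rep(G)'$; braidedness and monoidality of $\widetilde{\cF}^\sZ$ follow from the monoidal structure of $\uupi^\cV$, and the monoidal unit forces the centering on $\cD_e=\cC$ to agree with $\cF^\sZ$, supplying $\alpha$.

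I expect the bulk of the work to be this translation dictionary between the half-braiding axioms for $\widetilde{\cF}^\sZ$ in $Z(\cD)$ and the centering coherences of $\uuBrPic^\cV(\cC,\cF^\sZ)$, together with the bookkeeping to verify that the forward and reverse assignments are mutually inverse 2-functors respecting $1$- and $2$-morphisms (graded bimodule equivalences and bimodule natural isomorphisms on one side, monoidal transformations and modifications on the other). Once the square is shown to commute up to equivalence, the homotopy-fiber formalism of \S\ref{sec:TruncationOfHomotopyFibers} converts it mechanically into the claimed bijection of sets.
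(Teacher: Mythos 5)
Your proposal is correct, and its substantive core --- the dictionary $\eta^g_{v,m}:=\sigma_{m,\widetilde{\cF}^\sZ(v)}$ restricting the half-braiding of $\widetilde{\cF}^\sZ(v)\in\Rep(G)'\subset Z(\cD)$ to each graded piece $\cD_g$, verifying the centering coherences \eqref{eq:BimoduleFunctorEquivalence1}--\eqref{eq:BimoduleFunctorEquivalence3}, checking compatibility of the tensor equivalence $\cD_g\boxtimes_\cC\cD_h\to\cD_{gh}$ with \eqref{eq:VerticalCompositionEta}, and conversely reassembling the family $\{\eta^g\}_{g\in G}$ into a single half-braiding with all of $\cD$ --- is exactly the paper's proof. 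Where you differ is in the packaging: the paper never constructs your top horizontal equivalence. It sets up the discrete-fibration formalism \emph{before} the theorem to identify both sides as honest sets (the strict fiber of the forgetful functor on $\cV$-enriched $G$-extensions over $\cD$, and $\stFib_{\uupi}((\Forget_\cV)_*)$), and then the proof simply constructs two mutually inverse maps of sets between these strict fibers, with no need to promote the dictionary to a 2-functor on 1- and 2-morphisms or to check a commuting square. Your route through a $\cV$-enriched refinement of Theorem \ref{thm:ExtensionsAsMonoidal2Functors} would prove a strictly stronger statement (an equivalence of total 2-groupoids covering the ENO equivalence), and the part you defer as ``bookkeeping'' --- that the assignment respects 1-morphisms and 2-morphisms on both sides and that the square commutes up to equivalence --- is nontrivial and is precisely what the paper's fiberwise argument avoids; note the key simplification available there: since $\Forget_\cV$ is fully faithful on 2-morphisms and the lift of $\uupi$ is strict, a lift contributes \emph{no new data} at the 2-morphism level, only the condition \eqref{eq:HalfBraidingEquivalentToBimoduleFunctorCompatibility} on the already-given structure 2-morphisms of $\uupi$, so the strict fiber is literally the set of coherent families $\{\eta^g\}$ and the bijection is immediate from your dictionary. (Your square strategy is not alien to the paper --- it is used for $(2)\cong(4)$ of Theorem \ref{thm:ClassificationOfGCrossedBraidingsUsingBrPicC}, where the needed total-space equivalences are available from the literature --- but for the present theorem the direct fiber comparison is shorter, and if you pursue your version you should either supply the full 2-functoriality checks or collapse to the fiberwise argument as the paper does.)
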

\begin{proof}
Suppose we can lift the $\cV$-enrichment of $\cC$ to $\cD$. We define morphisms $\eta_{v,m}:m
\triangleleft F_\cC(v)\rightarrow F_\cC(v)\triangleright m $ for each $m\in \cD_g$, where $F_\cC:
\cV \to Z(\cC) \to \cC$ as follows. A lift $\widetilde{\cF}: \cV \to Z(\cD)$ applied to a $v\in \cV$
can be viewed as $\widetilde{\cF}(v)=(F_{\cC}(v), \sigma_{\bullet,F_\cC(v)})$, where
$\sigma_{\bullet,F_\cC(v)}$ is a half-braiding for $F_\cC(v)$ with $d\in \cD$. We define
$\eta_{v,d}:=\sigma_{d,F_\cC(v)}:d\otimes F_{\cC}(v)\rightarrow F_{\cC}(v)\otimes d$. The fact that
$\widetilde{\cF}:\cV\rightarrow Z(\cD)$ is a braided monoidal functor ensures that $\eta_{v,d}$
makes the diagrams \eqref{eq:BimoduleFunctorEquivalence1}, \eqref{eq:BimoduleFunctorEquivalence2},
and \eqref{eq:BimoduleFunctorEquivalence3}   commute. This means we can lift the image of the
monoidal 2-functor $\uuG \to \uuBrPic(\cC)$ to $\uuBrPic^{\mathcal{V}}(\cC,\cF^\sZ)$ at the level of 1-morphisms. To
lift at the level of 2-morphisms, recall that $\otimes$ induces a bimodule equivalence
$\cD_{g}\boxtimes_{\cC} \cD_{h}\rightarrow \cD_{gh}$. We need to show that this bimodule equivalence
is a morphism in $\uuBrPic^{\mathcal{V}}(\cC,\cF^\sZ)$. Given objects $d_{g}\in \cD_{g}, d_{h}\in \cD_{h}$,
we need to check the following diagram commutes:

\begin{equation}
\label{eq:HalfBraidingEquivalentToBimoduleFunctorCompatibility}
\begin{tikzcd}
    \otimes\left((d_{g}\boxtimes_{\cC} d_{h})\triangleleft F_{\cC}(v)\right)
    =
    d_{g}\otimes (d_{h}\otimes F_{\cC}(v))
    \arrow{r} \arrow{d}
    &
    \otimes \left( F_{\cC}(v) \triangleright (d_{g}\boxtimes_{\cC} d_{h})\right)
    =
    (F_{\cC}(v)\otimes d_{g})\otimes d_{h})
    \arrow{d}
\\
   \otimes (d_{g}\boxtimes_{\cC} d_{h}) \triangleleft F_{\cC}(v)
    =
    ( d_{g}\otimes d_{h}) \otimes F_{\mathcal{C}}(v)
     \arrow{r}
    &
    F_{\mathcal{C}}(v)
    \triangleright
    \left(\otimes(d_{g}\boxtimes_{\cC} d_{h})\right)
    =
    F_{\mathcal{C}}(v)\otimes(d_{g}\otimes d_{h})
 \end{tikzcd}
\end{equation}
where the top isomorphism is that from \eqref{eq:VerticalCompositionEta}.
This now follows immediately from the associativity of a half-braiding.

Conversely,
given a $\uupi^\cV: \uuG \to \uuBrPic^\cV(\cC, \cF^{\sZ})$ such that $\Forget_\cV\circ \uupi^\cV = \uupi$,
we need to extend the half-braiding of $\cF^{\sZ}(v)$ with $\cC$ to all of $\cD$.
We simply use $\eta^g$ on $\cD_g$ as our half-braiding:
$$
\eta^g_{v,m_g} : m_g \triangleleft F_{\cD_g}(v) = m_g \otimes \cF(v) \to \cF(v) \otimes m_g = F_{\cD_g}(v) \triangleright m_g.
$$
Now
one uses the commutativity of
\eqref{eq:BimoduleFunctorEquivalence1}, \eqref{eq:BimoduleFunctorEquivalence2}, \eqref{eq:BimoduleFunctorEquivalence3}
and
\eqref{eq:HalfBraidingEquivalentToBimoduleFunctorCompatibility}
to verify that this is a well-defined half-braiding with all of $\cD$.

Finally, one verifies these two constructions are mutually inverse.
\end{proof}

\subsection{Classification in terms of \texorpdfstring{$G$}{G}-equivariant structures on \texorpdfstring{$\cF^\sZ$}{FZ}}

We now show that given a $\cV$-fusion category $(\cC,\cF^\sZ)$ and an ordinary $G$-extension $\cD$ of $\cC$,
the set of possible compatible 
$\cV$-enrichments on $\cD$ is in canonical bijection with
$G$-equivariant structures
on the classifying functor
$\cF^{\scriptscriptstyle Z}:\mathcal{V}\rightarrow Z(\cC)$
with respect to the categorical action
$\urho:\underline{G}\rightarrow \uAut^{\rm br}_{\otimes}(Z(\cC))$
induced from the $G$-extension $\cC \subseteq \cD= \bigoplus_g \cD_g$.
(Recall from Example \ref{ex:LiftToEquivariantization} that lifts of $\cF^\sZ: \cV \to Z(\cC)$ to $Z(\cC)^G$ naturally form a set.)

\begin{thm}
\label{thm:ClassificationOfVEnrichmentsAsEquivariantLifts}
The lifts $\widetilde{\cF}^{\scriptscriptstyle Z}: \cV \to Z(\cD)$ which are compatible with $\cF^{\scriptscriptstyle Z}: \cV \to Z(\cC)$ are in bijective correspondence with lifts $\widetilde{\cF}^{\scriptscriptstyle Z} : \cV \to Z(\cC)^G$ which satisfy
$\Forget_G\circ \widetilde{\cF}^{\scriptscriptstyle Z} = \cF^{\scriptscriptstyle Z}$, where $\Forget_G: Z(\cC)^G \to Z(\cC)$ forgets the $G$-equivariant structure.
\begin{equation}
\label{eq:EquivariantLiftsOfCentralFunctor}
\begin{tikzcd}
\cV
\ar[rr, dashed, "\widetilde{\cF}^{\scriptscriptstyle Z}"]
\ar[dr, swap, "{\cF}^{\scriptscriptstyle Z}"]
&&
Z(\cC)^G
\ar[dl,"\Forget_G"]
\\
&
Z(\cC)
\end{tikzcd}
\end{equation}
\end{thm}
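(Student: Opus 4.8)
The plan is to deduce the bijection from the braided equivalence $Z(\cD)\iso Z_\cC(\cD)^G$ of \cite{MR2587410} recorded in \eqref{eq:RelativeCenterEquivariantization}, by expressing both sides as $0$-truncations of homotopy fibers of ``lift'' functors and then producing an equivalence between those homotopy fibers. All of the genuine content sits in \eqref{eq:RelativeCenterEquivariantization}; the rest is bookkeeping inside the truncation framework of \S\ref{sec:TruncationOfHomotopyFibers}.

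First I would set up both sides as strict fibers. By definition of a $\cV$-enriched $G$-extension, the left-hand set is the strict fiber over $i\circ\cF^{\sZ}$ of the functor $(\Forget_\cC)_*$ obtained by postcomposing braided monoidal functors $\cV\to Z(\cD)$ with $\Forget_\cC\colon Z(\cD)\to Z_\cC(\cD)$; by Lemma~\ref{lem:LandInTrivialComponent} the equality $\Forget_\cC\circ\widetilde{\cF}^{\sZ}=i\circ\cF^{\sZ}$ already forces $\widetilde{\cF}^{\sZ}$ to land in $\Rep(G)'$, so no separate hypothesis is needed. Exactly as in the examples of \S\ref{sec:StrictifyFibersFor1Groupoids}, transport of half-braidings along isomorphisms shows that $\Forget_\cC$ restricted to $\core$ is a discrete fibration, and hence so is $(\Forget_\cC)_*$ by the argument of Example~\ref{ex:LiftsOfFunctor} (the braided structure on a lift is uniquely transported). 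Thus the left-hand set is in canonical bijection with $\tau_0(\hoFib_{i\circ\cF^{\sZ}}((\Forget_\cC)_*))$. Symmetrically, $\Forget_G\colon Z(\cC)^G\to Z(\cC)$ restricted to $\core$ is a discrete fibration by Example~\ref{ex:LiftToEquivariantization} applied to the braided action $\urho$ (so that $Z(\cC)^G$ is itself braided), whence the right-hand set is in canonical bijection with $\tau_0(\hoFib_{\cF^{\sZ}}((\Forget_G)_*))$.

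Second, I would build an equivalence between these two homotopy fibers out of \eqref{eq:RelativeCenterEquivariantization}. The braided equivalence $\Psi\colon Z(\cD)\iso Z_\cC(\cD)^G$ intertwines $\Forget_\cC$ with the forgetful functor $Z_\cC(\cD)^G\to Z_\cC(\cD)$ and restricts to the braided equivalence $\Rep(G)'\iso Z(\cC)^G$. Postcomposition with $\Psi$ therefore identifies the category of lifts of $i\circ\cF^{\sZ}$ along $\Forget_\cC$ with the category of lifts of $i\circ\cF^{\sZ}$ along $Z_\cC(\cD)^G\to Z_\cC(\cD)$. To reduce this last forgetful functor to the one in the statement, I would invoke the commuting top square of \eqref{eq:RelativeCenterEquivariantization}: the inclusion $i\colon Z(\cC)=(Z_\cC(\cD))_e\hookrightarrow Z_\cC(\cD)$ is $G$-equivariant and fully faithful, and since $\cF^{\sZ}$ lands in $Z(\cC)$, a $G$-equivariant structure on $i\circ\cF^{\sZ}$ in $Z_\cC(\cD)^G$ is precisely a $G$-equivariant structure on $\cF^{\sZ}$ inside $Z(\cC)^G$. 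This produces an equivalence with $\hoFib_{\cF^{\sZ}}((\Forget_G)_*)$, and hence an equivalence of the two homotopy fibers from the first step.

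Finally, applying $\tau_0$ to this equivalence and composing with the two strict-fiber identifications yields the asserted canonical bijection; unwinding the construction, it sends a compatible lift $\widetilde{\cF}^{\sZ}\colon\cV\to Z(\cD)$ to the equivariant lift $\Psi\circ\widetilde{\cF}^{\sZ}\colon\cV\to\Rep(G)'\iso Z(\cC)^G$. I expect the main obstacle to be the careful matching of the ``large'' forgetful functor $Z_\cC(\cD)^G\to Z_\cC(\cD)$ seen through $\Forget_\cC$ and $\Psi$ with the ``small'' forgetful functor $Z(\cC)^G\to Z(\cC)$ in the statement: one must verify that the equivariant structures transported through \eqref{eq:RelativeCenterEquivariantization} are exactly those for the given categorical action $\urho$ on $Z(\cC)$, and that the braided monoidal coherences are respected at every stage.
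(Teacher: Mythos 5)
Your proposal is correct and follows essentially the same route as the paper: the paper's proof also inserts the equivalence \eqref{eq:RelativeCenterEquivariantization} of \cite{MR2587410} into \eqref{eq:LiftingVEnrichment} and then cancels the fully faithful inclusion $i: Z(\cC) \hookrightarrow Z_\cC(\cD)$ to identify the two sets of lifts, with the strict-fiber/homotopy-fiber bookkeeping of \S\ref{sec:TruncationOfHomotopyFibers} invoked implicitly rather than spelled out. Your version merely makes that truncation machinery explicit, which if anything treats the up-to-natural-isomorphism commutativity of the lower triangle in \eqref{eq:RelativeCenterEquivariantization} more carefully than the paper's terse argument.
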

\begin{proof}
We insert the commutative diagram \eqref{eq:RelativeCenterEquivariantization} based on
\cite{MR2587410} into \eqref{eq:LiftingVEnrichment} to get the following diagram:
\begin{equation}
\label{eq:LiftingVEnrichmentWithGNN}
\begin{tikzcd}
\cV
\ar[d, swap, "\cF^{\scriptscriptstyle Z}"]
\ar[rr, dashed, bend right=-30, "\widetilde{\cF}^{\scriptscriptstyle Z}"]
&
Z(\cC)^G
\ar[d, hook]
\ar[r, leftrightarrow, "\cong"]
\ar[dl, swap, "\Forget_G"]
&
\Rep(G)'
\ar[d, hook]
\\
Z(\cC)
\ar[dr, hook, swap, "i"]
&
Z_\cC(\cD)^G
\ar[r, leftrightarrow, "\cong"]
\ar[d, "\Forget_G"]
&
Z(\cD)
\ar[dl,"\Forget_\cC"]
\\
&
Z_\cC(\cD)
\end{tikzcd}
\end{equation}
We see that the set of lifts  $\widetilde{\cF}^{\scriptscriptstyle Z}: \cV \to \Rep(G)'\cap Z(\cD)$
which are compatible with $\cF^{\scriptscriptstyle Z}$ are in bijective correspondence with lifts
$\widetilde{\cF}^{\scriptscriptstyle Z} : \cV \to Z(\cC)^G$ which satisfy $i\circ \Forget_G\circ
\widetilde{\cF}^{\scriptscriptstyle Z} = i\circ \cF^{\scriptscriptstyle Z}$. Since $i$ is faithful
on both objects and morphisms, we can cancel it from the left on both sides of the equation, and the
result follows.
\end{proof}

Thus to classify enriched extensions, we must solve the equivariant lifting  problem for the data given by the initial enrichment and the extension.
In other words, given an (oplax) braided (strongly unital) monoidal functor
$\cF^{\scriptscriptstyle Z}:\mathcal{V}\rightarrow Z(\cC)$
and a categorical action
$\urho:\underline{G}\rightarrow \uAut^{\rm br}_{\otimes}(Z(\cC))$,
we need to find all the $G$-equivariant structures on $\cF^{\scriptscriptstyle Z}$.
We will formalize this notion in Definition \ref{defn:GEquivariantStructureOnFunctor} in the next section.

\section{The equivariant functor lifting problem}

In this section, we study the equivariant functor lifting problem, showing lifts are in bijection
with splittings of a certain exact sequence. Our approach is similar to \cite[\S3]{MR3933137}. We do
so in greater generality than needed for \eqref{eq:EquivariantLiftsOfCentralFunctor} above, since
our results are significantly more general.

For this section, $\cV, \cW$ will denote linear monoidal categories (which are not necessarily braided!) and $(\cF,\varphi,\varepsilon): \cV \to \cW$ denotes an oplax monoidal functor (which need not be strongly unital!), where $\varphi = \{\varphi_{u,v}:\cF(uv) \to \cF(u)\cF(v)\}_{u,v\in\cV}$ is the oplaxitor and $\varepsilon: \cF(1_\cV) \to 1_\cW$ is the counit.

\begin{assume}
\label{assume:ConnectedCoalgebra}
Notice that $F(1_\cV)\in \cW$ is a coalgebra object with comultiplication $\Delta := \varphi_{1_\cV, 1_\cV}$ and counit $\varepsilon$.
For this section, we assume $\cF(1_\cV)$ is connected, i.e., $\cW(\cF(1_\cV)\to 1_\cW)= \bbC \varepsilon$.
\end{assume}

We further suppose
$(\underline{\rho},\mu): \underline{G} \to \uAut_\otimes(\cW)$ is a categorical action of the finite group $G$.
We write $g = \rho_g$ for notational simplicity, and we write $\psi^g$ for its tensorator.
Our convention for the tensorator $\mu$ for $\underline{\rho}$ is $\mu_{g,h}:g\circ h \Rightarrow gh$.

\subsection{The first obstruction}

\begin{defn}
We consider the following categorical groups.
\begin{itemize}
\item
$\uAut_{\otimes}(\mathcal{W})$ is the categorical group of (strong) monoidal auto-equivalences of $\mathcal{W}$.
Thought of as a monoidal category, objects are monoidal auto-equivalences of $\mathcal{W}$, and morphisms are monoidal natural isomorphisms.

\item
$\uAut_{\otimes}(\mathcal{W} | \cF)$ is the categorical group defined as follows: objects are
triples $(\alpha, \psi^\alpha, \lambda^\alpha)$, where $(\alpha,\psi^\alpha) \in \uAut_\otimes(\cW)$
is a monoidal auto-equivalence of $\mathcal{W}$ (here, $\psi^\alpha$ is the tensorator of $\alpha$), and $\lambda^\alpha: \cF\Rightarrow \alpha\circ
\cF$ is an (oplax) monoidal natural isomorphism. The 1-composition is strict and defined as
$$
(\alpha, \psi^\alpha, \lambda^\alpha)
\circ
(\beta, \psi^\beta, \lambda^\beta)
:=
(\alpha \circ \beta, \psi^\alpha \circ \alpha(\psi^\beta), \lambda^\alpha \circ \alpha(\lambda^\beta)).
$$
The 2-morphisms $\eta: (\alpha, \psi^\alpha, \lambda^\alpha) \Rightarrow (\beta, \psi^\beta,
\lambda^\beta)$ are all monoidal natural isomorphisms $\eta: (\alpha, \psi^\alpha) \Rightarrow
(\beta, \psi^\beta)$ such that $(\eta\circ \id_F)\circ \lambda^\alpha = \lambda^\beta$.

\item
$\uStab_{\otimes}(\cF)$ is the full categorical subgroup of $\uAut_{\otimes}(\mathcal{W})$ generated
by the image of $\uAut_{\otimes}(\mathcal{W} | \cF)$ under the forgetful functor
$(\alpha,\psi^\alpha, \lambda^\alpha)\mapsto (\alpha,\psi^\alpha)$.

\end{itemize}
\end{defn}

\begin{defn}
\label{defn:GEquivariantStructureOnFunctor}
Let $\rho: \underline{G}\rightarrow \uAut_{\otimes}(\mathcal{W})$, $g\mapsto \rho_g$ be a
categorical action, and $\cF:\mathcal{V}\rightarrow \mathcal{W}$ an oplax monoidal functor. A
$G$-\textit{equivariant structure} on $\cF$ is a lifting
\begin{equation}
\label{eq:GEquivariantLifts}
\begin{tikzcd}
&
\uAut_{\otimes}(\cW | \cF) \arrow[d,"\Forget_\cF"]
 \\
\uG
\arrow[ur, dashed, "\widetilde{\urho}"]
\arrow[r, swap,"\underline{\rho}"]
&
\uAut_{\otimes}(\mathcal{W})
\end{tikzcd}
\end{equation}
which satisfies $\Forget_\cF\circ \widetilde{\underline{\rho}}=\underline{\rho} $ on the nose.
\end{defn}

Hence in order to find a lifting $\widetilde{\urho}: \uG \to \uAut(\cW|\cF)$, it is necessary that
for each $g\in G$,  there exists a monoidal natural isomorphism $\lambda^g: \cF \Rightarrow g\circ
\cF$. We call the existence of such a $\lambda^g$ for each $g\in G$ the \emph{first obstruction} to
the equivariant functor lifting problem. We say \emph{the first obstruction vanishes} if such a
$\lambda^g$ exists for each $g\in G$.

\subsection{The second obstruction}

We now assume that the first obstruction to the equivariant lifting problem vanishes, i.e., for
every $g\in G$, there exists a monoidal natural isomorphism $\lambda^g: \cF \Rightarrow g\circ \cF$.
We now give a necessary and sufficient condition for the isomorphisms $(\lambda^g)_{g\in G}$ to
assemble to a lift $\widetilde{\urho}:\uG \to \uAut_\otimes(\cW|\cF)$. We call this condition the
\emph{second obstruction} to the equivariant functor lifting problem.

Recall that the adjoint to the forgetful functor $\Forget_G : \cW^G \to \cW$
is $I : \cW \to \cW^G$ by $w \mapsto \bigoplus g(w)$ and
$f\in \cW(w_1 \to w_2)$ maps to $I(f)_{g,h}:=\delta_{g,h}\cdot g(f)$.
Observe that given $w\in \cW$, $f: I(w) \to I(w)$ is $G$-equivariant if and only if
the following diagram commutes for all $g,h,k\in G$:
\begin{equation}
\label{eq:EquivariantMapI(w)}
\begin{tikzcd}
g(k(w))
\ar[d, "{g(f_{h,k})}"]
\ar[r,"{\mu^w_{g,k}}"]
&
(gk)(w)
\ar[d,"{f_{gh,gk}}"]
\\
g(h(w))
\ar[r,"{\mu^w_{g,h}}"]
&
(gh)(w)
\end{tikzcd}
\qquad\qquad
\forall g,h,k\in G
\end{equation}
where $f_{h,k}:k(w)\to h(w)$ is the $(h,k)$-component map of $f$.
The functor $I$ is endowed with an oplax monoidal structure $\nu^I_{w_1,w_2} \in \cW^G( I(w_1 \otimes w_2) \to I(w_1)\otimes I(w_2))$ given componentwise by
$$
\bigoplus_{g\in G}\psi^g_{w_1,w_2}
:
\bigoplus_{g\in G} g(w_1\otimes w_2)
\xrightarrow {\psi^g_{w_1,w_2}}
\bigoplus_{g\in G}g(w_1)\otimes g(w_2)
\subseteq
\bigoplus_{g,h\in G} g(w_1)\otimes h(w_2)
\cong
I(w_1)\otimes I(w_2).
$$

\begin{remark}
In addition to $\cF(1_\cV)$ being a coalgebra with comultiplication $\Delta$ (see Assumption
\ref{assume:ConnectedCoalgebra}), notice that $(I\circ \cF)(1_\cV)\in \cW^G$ is also a coalgebra
object with comultiplication given on components by
$$
\Lambda_k^{g,h} := \delta_{g=h}\delta_{g=k}\cdot \psi^g_{\cF(1_\cV), \cF(1_\cV)}\circ g(\Delta) : k(\cF(1_\cV)) \to g(\cF(1_\cV)) \otimes h(\cF(1_\cV))
$$
and counit given on components by $\varepsilon_g := g(\varepsilon^\cF) : g(\cF(1_\cV))\to 1_\cW$.
\end{remark}

We define $\iota:\Aut_\otimes(\cF) \to \Aut_{ \otimes}(I\circ \cF)$ by $\iota(f)^{v} := I(f^v) \in
\cW^G(I(\cF(v))\to I(\cF(v)))$. To verify that $\iota(f)$ is oplax monoidal, we see the outside
square of the following diagram commutes, as the inner squares both commute:
\[
\begin{tikzcd}
I(\cF(v_1\otimes v_2))
\ar[rr,"I(\varphi^{v_1,v_2})"]
\ar[d,"I(f^{v_1\otimes v_2})"]
&&
I(\cF(v_1)\otimes \cF(v_2))
\ar[rr,"\nu^{F(v_1),F(v_2)}"]
\ar[d,"I(f^{v_1}\otimes f^{v_2})"]
&&
I(\cF(v_1))\otimes I(\cF(v_2))
\ar[d,"I(f^{v_1})\otimes I(f^{v_2})"]
\\
I(\cF(v_1\otimes v_2))
\ar[rr,"I(\varphi^{v_1,v_2})"]
&&
I(\cF(v_1)\otimes \cF(v_2))
\ar[rr,"\nu^{\cF(v_1),\cF(v_2)}"]
&&
I(\cF(v_1))\otimes I(\cF(v_2)).
\end{tikzcd}
\]

The following lemma is similar to \cite[Lem.~3.2]{MR3933137}.
We provide a proof for completeness and convenience of the reader.

\begin{lem}
\label{lem:UniqueNonzeroComponent}
Suppose $\eta\in \Aut_\otimes(I\circ \cF)$.
\begin{enumerate}
\item
For $h,k\in G$, $\eta^{v}_{h,k}: k(\cF(v)) \to h(\cF(v))$ is equal to
$\eta^{v}_{h,k}
=
\mu_{k,k^{-1}h}^{\cF(v)}\circ k(\eta^{v}_{k^{-1}h,e})\circ (\mu_{k,e}^{\cF(v)})^{-1}$.
Hence $\eta^{v}$ is completely determined by its components
$\eta^{v}_{g,e}: g(\cF(v)) \to \cF(v)$ for $v\in \cV$.
\item
There is a unique $g\in G$ such that $\eta^{1_\cV}_{g,e}\neq 0$, and $\eta^{1_\cV}_{g,e}: \cF(1_\cV) \to g(\cF(1_\cV))$ is a coalgebra isomorphism.
\item
For every $h\in G$, there are unique $g,k\in G$ such that $\eta^{v}_{g,h}\neq 0\neq \eta^{v}_{h,k}$ for all $v\in \cV$.
These $g,k$ are independent of $v\in \cV$.
\end{enumerate}
\end{lem}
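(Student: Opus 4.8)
The plan is to handle the three parts in the order stated, using the $G$-equivariance condition \eqref{eq:EquivariantMapI(w)} for part (1), the coalgebra structure on $(I\circ\cF)(1_\cV)$ together with connectedness (Assumption \ref{assume:ConnectedCoalgebra}) for part (2), and then a reduction of part (3) to part (1) via the oplaxitor. For part (1), I would simply specialize \eqref{eq:EquivariantMapI(w)} to $f=\eta^v$ and $w=\cF(v)$: since $\eta^v$ is a morphism in $\cW^G$, its components satisfy the commuting square for all indices. Setting the outer index equal to $k$ and the inner pair equal to $(k^{-1}h,e)$, the square becomes $\eta^v_{h,k}\circ\mu^{\cF(v)}_{k,e}=\mu^{\cF(v)}_{k,k^{-1}h}\circ k(\eta^v_{k^{-1}h,e})$, and inverting $\mu^{\cF(v)}_{k,e}$ gives the asserted formula
\[
\eta^v_{h,k}=\mu^{\cF(v)}_{k,k^{-1}h}\circ k(\eta^v_{k^{-1}h,e})\circ (\mu^{\cF(v)}_{k,e})^{-1}.
\]
Every component is thus expressed through the column $\eta^v_{g,e}$, which proves the "determined by" claim.

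For part (2), the key observation is that, because $\eta$ is a monoidal automorphism of the oplax functor $I\circ\cF$, its value $\eta^{1_\cV}$ at the unit is an automorphism of the coalgebra $(I\circ\cF)(1_\cV)=\bigoplus_k k(\cF(1_\cV))$ whose comultiplication $\Lambda$ and counit $\varepsilon_\bullet$ are described in the Remark preceding the lemma: compatibility of $\eta$ with the oplaxitor at $(1_\cV,1_\cV)$ and with the counit say exactly that $\eta^{1_\cV}$ preserves $\Lambda$ and $\varepsilon_\bullet$. Since each $k$ is a monoidal equivalence, each summand $k(\cF(1_\cV))$ is again connected, so $\cW(k(\cF(1_\cV))\to 1_\cW)=\bbC\cdot k(\varepsilon^\cF)$; I would therefore write $h(\varepsilon^\cF)\circ\eta^{1_\cV}_{h,k}=c_{h,k}\,k(\varepsilon^\cF)$ for scalars $c_{h,k}$. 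The counit equation $\varepsilon_\bullet\circ\eta^{1_\cV}=\varepsilon_\bullet$ yields $\sum_h c_{h,k}=1$ for each $k$, while applying $\id\otimes h(\varepsilon^\cF)$ to the off-diagonal $(g\neq h)$ part of the comultiplication equation and using the counit axiom of the coalgebra $k(\cF(1_\cV))$ collapses it to $c_{h,k}\,\eta^{1_\cV}_{g,k}=0$ for $g\neq h$. These two facts force, for each $k$ (in particular $k=e$), a unique index $g$ with $\eta^{1_\cV}_{g,k}\neq 0$ and $c_{g,k}=1$; the diagonal part of the comultiplication equation together with this normalization shows that component is a coalgebra morphism, and invertibility of $\eta^{1_\cV}$ (its inverse is monomial by the same argument, and composites collapse to identities) upgrades it to a coalgebra isomorphism. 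Writing $g_0$ for the unique index in the column $k=e$ completes part (2).

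For part (3), I would first use part (1): since the $\mu$'s are isomorphisms and each $k(-)$ is faithful, $\eta^v_{h,k}\neq 0$ iff $\eta^v_{k^{-1}h,e}\neq 0$, so it suffices to show the support set $S_v:=\{g:\eta^v_{g,e}\neq 0\}$ equals the singleton $\{g_0\}$ for all $v$. To get $S_v\subseteq\{g_0\}$, apply the oplaxitor compatibility of $\eta$ to the pair $(v,1_\cV)$; its diagonal block reads $\Phi^{v,1_\cV}_g\circ\eta^v_{g,e}=(\eta^v_{g,e}\otimes\eta^{1_\cV}_{g,e})\circ\Phi^{v,1_\cV}_e$, and post-composing with $\id\otimes g(\varepsilon^\cF)$ and invoking the oplax counit axiom for $\cF$ (inside the functor $g$) together with $g(\varepsilon^\cF)\circ\eta^{1_\cV}_{g,e}=c_{g,e}\,\varepsilon^\cF$ yields $\eta^v_{g,e}=c_{g,e}\,\eta^v_{g,e}$. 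Hence $\eta^v_{g,e}=0$ unless $c_{g,e}=1$, i.e.\ unless $g=g_0$. Invertibility of $\eta^v$ excludes a zero column, so $S_v=\{g_0\}$; combined with the reduction via part (1) this shows the nonzero entries of $\eta^v$ lie exactly along the permutation $k\mapsto kg_0$, independent of $v$. Concretely, for each $h$ the unique nonzero $\eta^v_{g,h}$ has $g=hg_0$ and the unique nonzero $\eta^v_{h,k}$ has $k=hg_0^{-1}$.

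The main obstacle is part (2): everything downstream rests on knowing that a coalgebra automorphism of $\bigoplus_k k(\cF(1_\cV))$ is monomial, and Assumption \ref{assume:ConnectedCoalgebra} is the only leverage for this. The delicate bookkeeping is isolating the scalars $c_{h,k}$ through the counit and then playing the normalization $\sum_h c_{h,k}=1$ against the off-diagonal comultiplication identity $c_{h,k}\,\eta^{1_\cV}_{g,k}=0$; once this dichotomy is in hand, parts (1) and (3) are essentially formal.
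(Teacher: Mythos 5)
Your proposal is correct and follows essentially the same route as the paper: part (1) by specializing the $G$-equivariance square \eqref{eq:EquivariantMapI(w)}, part (2) by extracting counit scalars from connectedness, summing them to $1$ via counitality, and killing off-diagonal components with the comultiplication identity, and part (3) by playing monoidality of $\eta$ against the unit object and splitting off the $\cF(1_\cV)$ factor with the counit. Your only (cosmetic) deviations are working with a general column $k$ rather than $k=e$, using the pair $(v,1_\cV)$ in place of $(1_\cV,v)$ with the scalar identity $\eta^v_{g,e}=c_{g,e}\,\eta^v_{g,e}$ instead of the paper's left-inverse argument, and making explicit the invertibility step that the paper leaves implicit.
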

\begin{proof}
To prove (1), since $\eta^{v}: I(\cF(v))\Rightarrow I(\cF(v))$ is $G$-equivariant, replacing $h,k$
by $g^{-1}h, g^{-1}k$ respectively in \eqref{eq:EquivariantMapI(w)} for $f=\eta^{v}$ gives
$$
\eta^{v}_{h,k}\circ \mu_{g,g^{-1}k}^{\cF(v)}
=
\mu_{g,g^{-1}h}^{\cF(v)}\circ g(\eta^{v}_{g^{-1}h,g^{-1}k})
\qquad\qquad
\forall g,h,k\in G
$$
Now setting $g=k$ gives the desired formula.

To prove (2), we first note that for each $g\in G$, there is a scalar $\gamma_g \in \bbC$ such that
$g(\varepsilon) \circ \eta^{1_\cV}_{g,e} = \gamma_g \cdot \varepsilon  \in \cC(\cF(1_\cV)\to 1_\cW)
= \bbC \cdot \varepsilon$. Looking at the $e$-component of the counitality axiom
$$
\varepsilon^I\circ I(\varepsilon^\cF)
=
\varepsilon^I\circ I(\varepsilon^\cF) \circ \sigma^{1_\cV} \in \cW^G(I(\cF(1_\cV)) \to 1_{\cW})
$$
gives us the identity
$$
\varepsilon
=
\sum_{h\in G} h(\varepsilon^\cF)\circ \eta^{1_\cV}_{h,e}
=
\left(\sum_{h\in G} \gamma_h\right)\varepsilon,
$$
which implies $\sum_h \gamma_h = 1$.
Fix $h\in G$ such that $\gamma_h \neq 0$.
For $g\neq h$, looking at the component $\Lambda^{h,g}_e: \cF(1_\cV)\to h(\cF(1_\cV))\otimes g(\cF(1_\cV))$ yields the identity
$$
(\eta^{1_\cV}_{h,e}\otimes \eta^{1_\cV}_{g,e})\circ \psi^{\cF(1_\cV),\cF(1_\cV)}_{e} \circ \Delta
=
\delta_{h=g}
\psi^{\cF(1_\cV),\cF(1_\cV)}_g
\circ
g(\Delta)
\circ
\eta^{1_\cV}_{g,e}
=
0.
$$
Postcomposing with $h(\varepsilon)\otimes \id_{g(\cF(1_\cV))}$ yields
$$
0
=
((\eta^{1_\cV}_{h,e}\circ h(\varepsilon^\cF)) \otimes \eta^{1_\cV}_{g,e})
\circ
\Delta
=
\gamma_h
\cdot
(\varepsilon^\cF\otimes \eta^{1_\cV}_{g,e})\circ \Delta
=
\gamma_h\cdot
\eta^{1_\cV}_{g,e}.
$$
Since $\gamma_h \neq 0$, we conclude $\eta^{1_\cV}_{g,e}=0$ whenever $g\neq h$, proving (2). Notice
this also proves $\gamma_h = 1$. That $\eta^{1_\cV}_{g,e}: \cF(1_\cV)\to g(\cF(1_\cV))$ is a
coalgebra isomorphism follows immediately by looking at components as above.

Now (3) follows by (1) and (2) using monoidality of $\eta$. Indeed, for $v\in \cV$, we have $v =
1_\cV \otimes v$ (suppressing unitors), so the components of $\eta^v \in \End_{\cW^G}((I\circ
\cF)(v)=\bigoplus_g g(\cF(v)))$ satisfy the following commuting diagram below:
$$
\begin{tikzcd}
h(\cF(v))
\ar[rrr, "\psi^h_{\cF(1_\cV), \cF(v)}\circ h(\varphi_{1_\cV, v})"]
\ar[d, "\eta^v_{g,h}"]
&&&
h(\cF(1_\cV))\otimes h(\cF(v))
\ar[d, "\eta^{1_\cV}_{g,h}\otimes \eta^v_{g,h}"]
\\
g(\cF(v))
\ar[rrr, "\psi_{\cF(1_\cV), \cF(v)}^g\circ g(\varphi_{1_\cV, v})"]
&&&
g(\cF(1_\cV))\otimes g(\cF(v)).
\end{tikzcd}
$$
Notice that the map $\psi_{\cF(1_\cV), \cF(v)}^g\circ g(\varphi_{1_\cV, v})$ has a left inverse for
every $v\in \cV$, namely $(g(\varepsilon)\otimes \id_{\cF(v)})\circ(\psi_{\cF(1_\cV),
\cF(v)}^g)^{-1}$. This implies that $\eta^v_{g,h} = 0$ whenever $\eta^{1_\cV}_{g,h} = 0$.
\end{proof}

\begin{lem}
\label{lem:ConstructionOfPi}
The function
$\pi: \Aut_{ \otimes}(I\circ \cF) \to G$
given by
setting $\pi(\eta)$ to be the unique $g$ such that $\eta^{1_\cV}_{g^{-1},e}\neq 0$
gives a well-defined group homomorphism.
\end{lem}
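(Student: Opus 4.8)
The plan is to deduce everything from Lemma \ref{lem:UniqueNonzeroComponent}, which already isolates the single nonzero component $\eta^{1_\cV}_{h,e}$ governing $\pi$. Well-definedness is immediate: by part (2) of the lemma there is exactly one $h\in G$ with $\eta^{1_\cV}_{h,e}\neq 0$, so setting $\pi(\eta):=h^{-1}$ assigns a \emph{unique} element of $G$ to each $\eta\in\Aut_\otimes(I\circ\cF)$. Evaluating on the identity transformation, whose $(g,h)$-component at $1_\cV$ is $\delta_{g,h}\id_{g(\cF(1_\cV))}$, the only nonzero diagonal entry sits at $g=h=e$, so $\pi(\id)=e$ as required for a homomorphism.

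For the multiplicativity, the key observation is that composing natural endomorphisms of $(I\circ\cF)=\bigoplus_g g(\cF(-))$ is matrix multiplication of their component matrices, so $(\eta\circ\zeta)^{1_\cV}_{g,e}=\sum_{h\in G}\eta^{1_\cV}_{g,h}\circ\zeta^{1_\cV}_{h,e}$. Writing $a:=\pi(\zeta)$ and $b:=\pi(\eta)$, part (2) of the lemma forces $\zeta^{1_\cV}_{h,e}=0$ unless $h=a^{-1}$, collapsing the sum to the single term $\eta^{1_\cV}_{g,a^{-1}}\circ\zeta^{1_\cV}_{a^{-1},e}$. To see for which $g$ this survives, I would invoke part (1) with $(h,k)=(g,a^{-1})$ (so $k^{-1}h=ag$) to rewrite $\eta^{1_\cV}_{g,a^{-1}}=\mu^{\cF(1_\cV)}_{a^{-1},ag}\circ a^{-1}(\eta^{1_\cV}_{ag,e})\circ(\mu^{\cF(1_\cV)}_{a^{-1},e})^{-1}$ as a conjugate of $\eta^{1_\cV}_{ag,e}$ by the invertible tensorators $\mu$. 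Since $a^{-1}(-)$ is an equivalence and the $\mu$'s are isomorphisms, this is nonzero precisely when $\eta^{1_\cV}_{ag,e}\neq 0$, i.e. when $ag=b^{-1}$, i.e. $g=a^{-1}b^{-1}=(ba)^{-1}$. Hence $(\eta\circ\zeta)^{1_\cV}_{g,e}\neq 0$ exactly for $g=(ba)^{-1}$, giving $\pi(\eta\circ\zeta)=ba=\pi(\eta)\pi(\zeta)$.

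The one point demanding care — and the only place where something could silently go wrong — is verifying that the surviving composite $\eta^{1_\cV}_{(ba)^{-1},a^{-1}}\circ\zeta^{1_\cV}_{a^{-1},e}$ is genuinely nonzero rather than accidentally vanishing, since in principle two nonzero morphisms can compose to zero. I expect this to be the main obstacle, and I would resolve it by noting that both factors are in fact isomorphisms: $\zeta^{1_\cV}_{a^{-1},e}$ is a coalgebra isomorphism by part (2), and by the formula in part (1) the nonzero component $\eta^{1_\cV}_{(ba)^{-1},a^{-1}}$ is the coalgebra isomorphism $\eta^{1_\cV}_{b^{-1},e}$ conjugated by invertible tensorators, hence itself an isomorphism. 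A composite of isomorphisms is an isomorphism and in particular nonzero, so the identification of $\pi(\eta\circ\zeta)$ above is valid and $\pi$ is a group homomorphism.
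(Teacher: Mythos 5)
Your proof is correct and follows essentially the same route as the paper's: expand the component of $\eta\circ\zeta$ at $1_\cV$ as a matrix product, collapse the sum using part (2) of Lemma \ref{lem:UniqueNonzeroComponent}, and relocate the surviving $\eta$-component via the equivariance formula in part (1). The only (harmless) difference is that you verify nonvanishing of the surviving composite directly by showing both factors are isomorphisms, whereas the paper gets this for free by applying part (2) to $\eta\circ\zeta$ itself, which lies in $\Aut_\otimes(I\circ\cF)$ and hence is guaranteed a unique nonzero component whose location the matrix computation then pins down.
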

\begin{proof}
Suppose $\eta, \xi\in \Aut^{ \otimes}_{\mathcal{W}^G}(I\circ F)$, and consider $\eta\circ \xi$.
Then $\pi(\eta \circ \xi)$ is the unique element $g\in G$ such that $(\eta\circ \xi)^{1_\cV}_{g^{-1},e}\neq 0$.
We calculate that
$$
(\eta \circ \xi)^{1_\cV}_{g^{-1},e}
=
\sum_{h\in G}\eta^{1_\cV}_{g^{-1},h}\circ \xi^{1_\cV}_{h,e}
=
\eta^{1_\cV}_{g,\pi(\xi)^{-1}}\circ \xi^{1_\cV}_{\pi(\xi)^{-1},e}.
$$
By \eqref{eq:EquivariantMapI(w)}, we see that $\eta^{1_\cV}_{g^{-1},\pi(\xi)^{-1}}\neq 0$ if and
only if $\eta^{1_\cV}_{\pi(\xi)g^{-1}, e}\neq 0$. Hence $(\pi(\xi)g^{-1})^{-1}=\pi(\eta)$, which
immediately implies $\pi(\eta\circ \xi)=g=\pi(\eta)\cdot \pi(\xi)$.
\end{proof}

\begin{lem}
\label{lem:PiPreimage}
For every $\eta \in \pi^{-1}(g^{-1})$, $\theta_v:= \eta_{g,e}^v :\cF(v) \to g(\cF(v))$ gives an
monoidal natural isomorphism $\theta:\cF \Rightarrow g\circ \cF$.
Moreover, every monoidal natural isomorphism $\cF \Rightarrow g\circ \cF$ arises in this way.
Hence $\pi^{-1}(g^{-1})$ is in bijective correspondence with monoidal natural isomorphisms $\theta:\cF \Rightarrow g\circ \cF$.
\end{lem}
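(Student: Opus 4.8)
The plan is to show that the assignment $\eta \mapsto \theta$, where $\theta_v := \eta^v_{g,e}$, is a well-defined bijection from $\pi^{-1}(g^{-1})$ onto the set of (oplax) monoidal natural isomorphisms $\cF \Rightarrow g\circ \cF$. First I would record the component structure of an arbitrary $\eta \in \pi^{-1}(g^{-1})$. By Lemma \ref{lem:ConstructionOfPi}, the hypothesis $\pi(\eta)=g^{-1}$ says precisely that $\eta^{1_\cV}_{g,e}\neq 0$, so by Lemma \ref{lem:UniqueNonzeroComponent}(2)--(3) the unique nonzero $(-,e)$-component of $\eta^v$ sits at $(g,e)$ for every $v\in\cV$, and by part (1) together with \eqref{eq:EquivariantMapI(w)} the component $\eta^v_{h,k}$ is nonzero exactly when $h=kg$, its value being transported from $\eta^v_{g,e}=\theta_v$ by the tensorator $\mu$. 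In particular $\eta$ is completely determined by $\theta$, which gives injectivity of the assignment for free.

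Next I would check that $\theta_v$ is an isomorphism. Since $\eta$ is a natural isomorphism, each $\eta^v$ is invertible in $\cW^G$, and $(\eta^v)^{-1}$ is the $v$-component of $\eta^{-1}\in\pi^{-1}(g)$ (using that $\pi$ is a homomorphism), whose nonzero components sit at $h=kg^{-1}$. Reading off the $(g,g)$-component of $\eta^v\circ(\eta^v)^{-1}=\id$ and the $(e,e)$-component of $(\eta^v)^{-1}\circ\eta^v=\id$ -- in each case only one summand contributes, by the permutation pattern $k\mapsto kg$ -- exhibits $(\eta^{-1})^v_{e,g}$ as a two-sided inverse of $\theta_v$.

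The heart of the argument is translating monoidality of $\eta$ into monoidality of $\theta$. The oplaxitor of $g\circ\cF$ at $(v_1,v_2)$ is $\psi^g_{\cF(v_1),\cF(v_2)}\circ g(\varphi_{v_1,v_2})$, which is exactly the $g$-diagonal piece of $\nu^I\circ I(\varphi)$. I would take the monoidality square for $\eta$, namely $(\eta^{v_1}\otimes\eta^{v_2})\circ\nu^I_{\cF(v_1),\cF(v_2)}\circ I(\varphi_{v_1,v_2}) = \nu^I_{\cF(v_1),\cF(v_2)}\circ I(\varphi_{v_1,v_2})\circ\eta^{v_1 v_2}$ in $\cW^G$, and project the source onto the $e$-summand $\cF(v_1 v_2)$ and the target onto the $(g,g)$-summand $g(\cF(v_1))\otimes g(\cF(v_2))$. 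On the right, $\eta^{v_1 v_2}$ sends $e\mapsto g$ via $\theta_{v_1 v_2}$, $I(\varphi)$ stays diagonal, and $\nu^I$ embeds the $g$-summand into the $(g,g)$-component, producing $(\psi^g\circ g(\varphi))\circ\theta_{v_1 v_2}$; on the left, $I(\varphi)$ and $\nu^I=\bigoplus_k\psi^k$ (with $\psi^e=\id$) land in the $(e,e)$-component, and the $(g,g)\leftarrow(e,e)$ entry of $\eta^{v_1}\otimes\eta^{v_2}$ is $\theta_{v_1}\otimes\theta_{v_2}$. Equating the two yields precisely the oplax monoidal naturality square for $\theta$. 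Naturality in $v$ is the analogous $(g,e)$-projection of the (diagonal) naturality square for $\eta$ against $I(\cF(f))$, and unitality is exactly the identity $\gamma_g=1$ already established inside the proof of Lemma \ref{lem:UniqueNonzeroComponent}(2), which states $g(\varepsilon)\circ\theta_{1_\cV}=\varepsilon$. Thus the map lands in monoidal natural isomorphisms.

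For surjectivity I would run the construction backwards: given a monoidal natural isomorphism $\theta:\cF\Rightarrow g\circ\cF$, define $\eta^v_{h,k}:=\mu^{\cF(v)}_{k,g}\circ k(\theta_v)\circ(\mu^{\cF(v)}_{k,e})^{-1}$ when $h=kg$ and $\eta^v_{h,k}:=0$ otherwise, so that $\eta^v_{g,e}=\theta_v$ up to the unit coherence of $\urho$ and so that the determination formula of Lemma \ref{lem:UniqueNonzeroComponent}(1) holds by design. It then remains to verify that each $\eta^v$ is $G$-equivariant, that $\eta$ is invertible (its components form a generalized permutation matrix of isomorphisms, being transported copies of $\theta_v$), natural, and monoidal, the last two by reversing the projection computation above. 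I expect the \emph{main obstacle} to be the equivariance verification in this reverse direction: checking \eqref{eq:EquivariantMapI(w)} for the reconstructed $\eta^v$ is a two-variable bookkeeping in the tensorator $\mu$, and closing it requires the associativity (2-cocycle) coherence of the categorical action $(\urho,\mu)$ rather than any property of $\theta$. Tracking how the tensor product in $\cW^G$ splits into its off-diagonal components $\bigoplus_{h,k}h(-)\otimes k(-)$ and interacts with $\nu^I$ is the other place demanding care, though it is routine once the indexing is fixed. Finally, injectivity and surjectivity together with the identity $\eta^v_{g,e}=\theta_v$ show the two assignments are mutually inverse, completing the bijection.
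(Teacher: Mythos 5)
Your proposal is correct and follows essentially the same route as the paper's proof: you read off oplax monoidality of $\theta$ by projecting the monoidality square for $\eta$ onto the $(g,e)$-components, and you reconstruct $\eta$ from $\theta$ via the identical transport formula $\eta^v_{h,k}=\delta_{g=k^{-1}h}\cdot\mu^{\cF(v)}_{k,g}\circ k(\theta_v)\circ(\mu^{\cF(v)}_{k,e})^{-1}$. Your treatment is in fact somewhat more thorough than the paper's, which asserts well-definedness of the reconstructed $\eta$ ``by construction,'' whereas you correctly flag the $G$-equivariance check \eqref{eq:EquivariantMapI(w)} as the point requiring the coherence $\mu_{g,h}\colon\rho_g\circ\rho_h\Rightarrow\rho_{gh}$ of the categorical action.
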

\begin{proof}
First, if $\eta \in \pi^{-1}(g^{-1})$, then the following diagram commutes for all $g \in G$ as $\eta$ is an oplax monoidal automorphism of $I\circ \cF$:
$$
\begin{tikzcd}
\cF(uv)
\ar[rrr, "\varphi_{u, v}"]
\ar[d, "\eta^{uv}_{g,e}"]
&&&
\cF(u)\otimes \cF(v)
\ar[d, "\eta^{u}_{g,e}\otimes \eta^v_{g,e}"]
\\
g(\cF(uv))
\ar[rrr, "\psi_{\cF(u), \cF(v)}^g\circ g(\varphi_{u, v})"]
&&&
g(\cF(u))\otimes g(\cF(v)).
\end{tikzcd}
$$
Notice this is exactly the condition that $\theta: \cF \Rightarrow g\circ \cF$ is oplax monoidal.
Conversely, if $\theta: \cF \Rightarrow g\circ \cF$ is an monoidal natural isomorphism,
then defining
$$
\eta^v_{h,k}:=
\delta_{g=k^{-1}h}
\cdot
\mu_{k,g}^{\cF(v)}\circ k(\theta_v)\circ (\mu_{k,e}^{\cF(v)})^{-1}
$$
gives a well-defined $\eta \in \pi^{-1}(g^{-1})$ such that $\eta^v_{g,e}=\theta_v$ by construction.
\end{proof}

\begin{prop}
The following sequence is exact:
\begin{equation}
\label{eq:ExactSequenceEquivariantFunctor}
\begin{tikzcd}
1
\ar[rr]
&&
\Aut_\otimes(\cF)
\ar[rr, "\iota"]
&&
\Aut_\otimes(I\circ \cF)
\ar[rr, "\pi"]
&&
G
\ar[rr]
&&
1.
\end{tikzcd}
\end{equation}
\end{prop}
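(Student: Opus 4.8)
The plan is to establish the three standard exactness conditions for a short exact sequence of groups: injectivity of $\iota$, surjectivity of $\pi$, and $\im(\iota)=\ker(\pi)$. That $\iota$ and $\pi$ are group homomorphisms is already in hand ($\iota$ from the oplax-monoidality diagram preceding Lemma~\ref{lem:UniqueNonzeroComponent}, together with functoriality of $I$, and $\pi$ from Lemma~\ref{lem:ConstructionOfPi}), so only the set-theoretic content remains, and essentially all of it can be read off from the component analysis in Lemmas~\ref{lem:UniqueNonzeroComponent} and \ref{lem:PiPreimage}.

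First I would dispatch injectivity of $\iota$. Since $\iota(f)^v = I(f^v)$ and the definition of $I$ gives $I(f^v)_{e,e} = e(f^v) = f^v$, an equality $\iota(f)=\id_{I\circ\cF}$ forces $f^v=\id_{\cF(v)}$ for every $v\in\cV$, whence $f=\id$. For surjectivity of $\pi$ I would invoke the standing assumption of this subsection that the first obstruction vanishes, so that for each $g\in G$ there exists a monoidal natural isomorphism $\cF\Rightarrow g\circ\cF$. By Lemma~\ref{lem:PiPreimage} the fiber $\pi^{-1}(g^{-1})$ is in bijection with the set of such isomorphisms and is therefore nonempty; letting $g$ range over all of $G$ shows every element of $G$ lies in the image of $\pi$.

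The substance is middle exactness, and the key observation is that $\ker(\pi)=\pi^{-1}(e)$ is exactly the $g=e$ instance of Lemma~\ref{lem:PiPreimage}. For the inclusion $\im(\iota)\subseteq\ker(\pi)$ I would compute directly: for $f\in\Aut_\otimes(\cF)$ the components $\iota(f)^{1_\cV}_{g^{-1},e}=I(f^{1_\cV})_{g^{-1},e}=\delta_{g,e}\,f^{1_\cV}$ vanish unless $g=e$, so $\pi(\iota(f))=e$ by definition of $\pi$. For the reverse inclusion I would use that Lemma~\ref{lem:PiPreimage} with $g=e$ furnishes a bijection $L:\pi^{-1}(e)\to\Aut_\otimes(\cF)$, $\eta\mapsto(\eta^v_{e,e})_v$. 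Composing gives $L(\iota(f))=(\iota(f)^v_{e,e})_v=(f^v)_v=f$, i.e.\ $L\circ\iota=\id_{\Aut_\otimes(\cF)}$; since $L$ is a bijection and $\iota$ is a one-sided inverse with image in $\pi^{-1}(e)$, it follows that $\iota$ corestricts to the bijection $L^{-1}$, and hence $\im(\iota)=\pi^{-1}(e)=\ker(\pi)$.

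The step I expect to demand the most care is confirming that the Lemma~\ref{lem:PiPreimage} bijection and $\iota$ are genuinely mutually inverse on $\ker(\pi)$, rather than merely matching at the $(e,e)$-component. A $\pi$-trivial $\eta$ is, a priori, only known to have its components $\eta^v_{g,e}$ concentrated at $g=e$ by Lemma~\ref{lem:UniqueNonzeroComponent}(2)--(3); one must then verify via Lemma~\ref{lem:UniqueNonzeroComponent}(1) that its remaining entries $\eta^v_{h,k}$ vanish off the diagonal and reduce to the coherence-twisted maps $\mu^{\cF(v)}_{h,e}\circ h(\eta^v_{e,e})\circ(\mu^{\cF(v)}_{h,e})^{-1}$ that precisely reconstitute $I(f)$ for $f=(\eta^v_{e,e})_v$. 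This is bookkeeping once the earlier component formulas are trusted, but it is the place where the categorical-action tensorators $\mu$ must be tracked honestly rather than suppressed.
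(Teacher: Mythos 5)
Your proof is correct and takes essentially the same route as the paper's: injectivity of $\iota$ by inspecting components, surjectivity of $\pi$ via Lemma~\ref{lem:PiPreimage} (using the standing vanishing of the first obstruction), and middle exactness via the component analysis of Lemma~\ref{lem:UniqueNonzeroComponent}, which shows any $\eta\in\ker(\pi)$ is determined by, and equal to $\iota$ of, its $(e,e)$-component $\theta^v:=\eta^v_{e,e}$. You are merely more explicit than the paper on the easy inclusion $\im(\iota)\subseteq\ker(\pi)$ and on the tensorator bookkeeping needed to see that $\iota$ and the Lemma~\ref{lem:PiPreimage} bijection are mutually inverse on the fiber over $e$.
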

\begin{proof}
The map $\iota$ is injective by definition. The map $\pi$ is surjective by Lemma
\ref{lem:PiPreimage}. To see $\im(\iota) = \ker(\pi)$, if $\eta\in \ker(\pi)$, then each $\eta^v$ is
determined by $\theta^v:=\eta^v_{e,e}:\cF(v)\to \cF(v)$ by Lemma \ref{lem:UniqueNonzeroComponent},
and $\theta : \cF\Rightarrow \cF$ is a monoidal natural isomorphism such that $\iota(\theta)=\eta$.
\end{proof}

\begin{thm}
\label{thm:ClassificationOfLiftingsViaSplittings}
The set of $G$-equivariant structures on $\cF$ as in \eqref{eq:GEquivariantLifts}
is in bijective correspondence with splittings of the exact sequence \eqref{eq:ExactSequenceEquivariantFunctor}.
\end{thm}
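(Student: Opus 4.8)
The plan is to unwind both sides of the claimed bijection into the same concrete data — a family of monoidal natural isomorphisms $\lambda^g:\cF\Rightarrow g\circ\cF$ — and then to match the two coherence conditions that appear.

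First I would make explicit what a $G$-equivariant structure is. Because $\uG$ carries only identity $2$-morphisms and the lift $\widetilde{\urho}$ agrees with $\urho$ on the nose under $\Forget_\cF$, the underlying auto-equivalence $\rho_g$ and tensorator $\psi^g$ of $\widetilde{\urho}(g)=(\rho_g,\psi^g,\lambda^g)$ are already determined, so the only freedom is the choice, for each $g$, of a monoidal natural isomorphism $\lambda^g:\cF\Rightarrow g\circ\cF$ — precisely the first-obstruction datum. The tensorator of $\widetilde{\urho}$ is forced to equal $\mu_{g,h}$ since $\Forget_\cF$ is faithful on $2$-morphisms, and the requirement that $\mu_{g,h}$ be a genuine $2$-morphism of $\uAut_\otimes(\cW\,|\,\cF)$ from the composite $\widetilde{\urho}(g)\circ\widetilde{\urho}(h)$ to $\widetilde{\urho}(gh)$ unwinds, via the composition law $(\alpha,\psi^\alpha,\lambda^\alpha)\circ(\beta,\psi^\beta,\lambda^\beta)=(\alpha\circ\beta,\psi^\alpha\circ\alpha(\psi^\beta),\lambda^\alpha\circ\alpha(\lambda^\beta))$ and the $2$-morphism condition $(\eta\circ\id_\cF)\circ\lambda^\alpha=\lambda^\beta$, to the single cocycle identity $\mu_{g,h}\circ g(\lambda^h)\circ\lambda^g=\lambda^{gh}$ for all $g,h\in G$ (with $\lambda^e=\id$). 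Thus a $G$-equivariant structure on $\cF$ is the same thing as a family $(\lambda^g)_{g\in G}$ satisfying this identity.

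Next I would produce the set-level bijection between sections of $\pi$ and such families. By Lemma \ref{lem:PiPreimage}, for each $x\in G$ the fiber $\pi^{-1}(x)$ is canonically in bijection with the monoidal natural isomorphisms $\cF\Rightarrow x^{-1}\circ\cF$. Hence a set-theoretic section $s\colon G\to\Aut_\otimes(I\circ\cF)$ of $\pi$ is the same as a choice, for every $x\in G$, of such an isomorphism; writing $x=g^{-1}$ and calling the corresponding isomorphism $\lambda^g:\cF\Rightarrow g\circ\cF$, sections of $\pi$ correspond bijectively to families $(\lambda^g)_{g\in G}$, and under this correspondence $s(g^{-1})^v_{g,e}=\lambda^g_v$ for all $v\in\cV$.

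The heart of the argument is to show that $s$ is a group homomorphism exactly when the family $(\lambda^g)_g$ satisfies the cocycle identity. Since $\pi$ is a homomorphism (Lemma \ref{lem:ConstructionOfPi}) and $\pi\circ s=\id$, both $s(g)\circ s(h)$ and $s(gh)$ automatically lie in $\pi^{-1}(gh)$; by Lemma \ref{lem:UniqueNonzeroComponent} together with Lemma \ref{lem:PiPreimage}, an element of $\pi^{-1}(gh)$ is determined by its $((gh)^{-1},e)$-component, viewed as a natural isomorphism $\cF\Rightarrow(gh)^{-1}\circ\cF$, so $s(g)\circ s(h)=s(gh)$ iff these components agree. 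I would therefore compute $(s(g)\circ s(h))^v_{(gh)^{-1},e}$ for all $v$ using the matrix formula $(\eta\circ\xi)^v_{h,k}=\sum_j\eta^v_{h,j}\circ\xi^v_{j,k}$ from the proof of Lemma \ref{lem:ConstructionOfPi}: only the summand $j=h^{-1}$ survives, and substituting the explicit components of $s(g)$ and $s(h)$ furnished by Lemma \ref{lem:PiPreimage} collapses the composite to $\mu_{h^{-1},g^{-1}}\circ h^{-1}(\lambda^{g^{-1}})\circ\lambda^{h^{-1}}$. Setting this equal to $s(gh)^v_{(gh)^{-1},e}=\lambda^{(gh)^{-1}}_v$ and relabelling the group elements yields precisely $\mu_{g,h}\circ g(\lambda^h)\circ\lambda^g=\lambda^{gh}$ for all $g,h$, which is exactly the cocycle identity identified in the first step. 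Chaining the three steps gives the asserted bijection with splittings of \eqref{eq:ExactSequenceEquivariantFunctor}.

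The main obstacle I anticipate is bookkeeping rather than conceptual: keeping the inversion in the correspondence $\pi^{-1}(g^{-1})\leftrightarrow(\cF\Rightarrow g\circ\cF)$ consistent throughout, and matching the tensorator and unit normalisations so that the component computation in the third step reproduces the coherence condition of the first step with $\mu_{g,h}$ in the correct position and with $\lambda^e=\id$ (equivalently $s(e)=\id$). Once these normalisations are pinned down, the verification that the computed $((gh)^{-1},e)$-component is genuinely natural in $v$ (so that equality at the level of natural transformations, not merely at $1_\cV$, is obtained) is immediate from the formula, and the equivalence ``homomorphism section $\Leftrightarrow$ cocycle family $\Leftrightarrow$ equivariant structure'' follows.
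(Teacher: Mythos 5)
Your proposal is correct and follows essentially the same route as the paper: both sides are matched via Lemma \ref{lem:PiPreimage}'s identification of $\pi^{-1}(g^{-1})$ with monoidal natural isomorphisms $\lambda^g:\cF\Rightarrow g\circ\cF$, with a splitting $\sigma$ corresponding to the family $\lambda^g:=\sigma(g^{-1})_{g,e}$. The paper compresses the verification into ``one now verifies these are mutually inverse,'' and your component computation showing that $\sigma$ being a homomorphism is equivalent to the cocycle identity $\mu_{g,h}\circ g(\lambda^h)\circ\lambda^g=\lambda^{gh}$ is exactly the omitted check, carried out correctly.
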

\begin{proof}
Suppose $\widetilde{\underline{\rho}}$ is a lift of $\underline{\rho}$, and denote
$\widetilde{\underline{\rho}}(g)=(g, \lambda_g)$, where $\lambda^g : \cF \Rightarrow g \circ \cF$ is
a monoidal natural isomorphism. We get a splitting $\sigma: G \to \Aut_\otimes(I\circ \cF)$ by
mapping $g^{-1}$ to the element corresponding to $\lambda^g$. Conversely, given a splitting
$\sigma$, $\sigma(g^{-1}) \in \pi^{-1}(g^{-1})$ gives an monoidal natural isomorphism $\lambda^{g}:=
\sigma(g^{-1})_{g,e}: \cF \Rightarrow g\circ \cF$. One now verifies that
$\widetilde{\underline{\rho}}(g):=(g, \lambda_g)$ is the desired lift. These two constructions are
clearly mutually inverse.
\end{proof}

\subsection{The braided case}

We now assume $\cV,\cW$ are braided monoidal categories and $\cF: \cV \to \cW$ is an oplax braided monoidal functor.
We again use Assumption \ref{assume:ConnectedCoalgebra} that $\cF(1_\cV)$ is a connected coalgebra in $\cW$.

\begin{defn}
We consider the categorical groups
\begin{itemize}
\item
$\uAut_{\otimes}^{\rm br}(\cW)$ is the full categorical subgroup of $\uAut_{\otimes}(\mathcal{W})$
whose objects are braided (strong) monoidal auto-equivalences of $\cW$.
Observe that if $(\alpha, \psi^\alpha)\in \uAut_{\otimes}^{\rm br}(\cW)$, $(\gamma, \psi^\gamma)\in \uAut_{\otimes}(\mathcal{W})$, and
$\eta:(\alpha, \psi^\alpha)\Rightarrow (\gamma, \psi^\gamma)$
is a  monoidal natural isomorphism,
then $(\gamma, \psi^\gamma) \in \uAut_{\otimes}^{\rm br}(\cW)$,
as the back face of the following diagram commutes.
$$
\begin{tikzcd}
&
\gamma(u\otimes v)
\ar[rr, "\psi^\gamma_{u,v}"]
\ar[dd, near end, swap, "\gamma(\beta^\cV_{u,v})"]
&&
\gamma(u)\otimes\gamma(v)
\ar[dd, "\beta^\cW_{\gamma(u),\gamma(v)}"]
\\
\alpha(u\otimes v)
\ar[rr, crossing over, near end, "\psi^\alpha_{u,v}"]
\ar[ur, "\eta_{uv}"]
\ar[dd, swap, "\alpha(\beta^\cV_{u,v})"]
&&
\alpha(u)\otimes\alpha(v)
\ar[ur, swap, "\eta_u\otimes\eta_v"]
\\
&
\gamma(v\otimes u)
\ar[rr, near end, "\psi^\gamma_{v,u}"]
&&
\gamma(v)\otimes\gamma(u)
\\
\alpha(v\otimes u)
\ar[ur, "\eta_{vu}"]
\ar[rr, "\psi^\alpha_{v,u}"]
&&
\alpha(v)\otimes\alpha(u)
\ar[from=uu, crossing over, near end, swap, "\beta^\cW_{\alpha(u),\alpha(v)}" ]
\ar[ur, swap, "\eta_v\otimes\eta_u"]
\end{tikzcd}
$$
Indeed, the left face commutes since $\eta$ is natural,
the right face commutes since $\beta^\cW$ is natural,
the top and bottom faces commute since $\eta$ is monoidal,
and the front face commutes since $\alpha$ is braided.
We conclude the back face must also commute.

\item
$\uAut_{\otimes}^{\rm br}(\mathcal{W} | \cF)$ is a the full categorical subgroup of
$\uAut_{\otimes}(\mathcal{W} | \cF)$ whose objects are
triples $(\alpha, \psi^\alpha, \lambda^\alpha)$, where
$(\alpha,\psi^\alpha) \in \uAut_\otimes^{\rm br}(\cW)$.

\item
$\uStab_{\otimes}^{\rm br}(\cF)$ is the full categorical subgroup of
$\uAut_{\otimes}^{\rm br}(\cW)$ generated by the image of $\uAut_{\otimes}^{\rm br}(\cW | \cF)$ under the forgetful functor
$(\alpha,\psi^\alpha, \lambda^\alpha)\mapsto (\alpha,\psi^\alpha)$.
\end{itemize}

\end{defn}

In this setting, we make the following definition.

\begin{defn}
\label{defn:GEquivariantBraidedStructureOnFunctor}
Let $\rho: \underline{G}\rightarrow \uAut_{\otimes}^{\rm br}(\mathcal{W})$ be a categorical action, and $\cF:\mathcal{V}\rightarrow \mathcal{W}$ an oplax braided monodal functor.
A $G$-\textit{equivariant structure} on $\cF$ is a lifting
\begin{equation}
\label{eq:GEquivariantBraidedLifts}
\begin{tikzcd}
&
\uAut_{\otimes}^{\rm br}(\cW | \cF) \arrow[d,"\Forget_\cF"]
 \\
\uG
\arrow[ur, dashed, "\widetilde{\urho}"]
\arrow[r, swap,"\underline{\rho}"]
&
\uAut_{\otimes}^{\rm br}(\mathcal{W})
\end{tikzcd}
\end{equation}
which satisfies $\Forget_\cF\circ \widetilde{\underline{\rho}}=\underline{\rho} $ on the nose.
\end{defn}

Since
$\pi_2(\uAut_{\otimes}^{\rm br}(\mathcal{W})) = \pi_2 (\uAut_{\otimes}(\mathcal{W}))$
and
$\pi_2(\uAut_{\otimes}^{\rm br}(\mathcal{W}|F)) = \pi_2 (\uAut_{\otimes}(\mathcal{W}|F))$,
$G$-equivariant lifts as in \eqref{eq:GEquivariantBraidedLifts} are again in bijective correspondence with splittings of the exact sequence \eqref{eq:ExactSequenceEquivariantFunctor}.

\section{Examples}
\label{sec:Examples}

In this section, we work out examples of our main Theorems
\ref{thm:ClassificationOfVEnrichmentsAsEquivariantLifts} and
\ref{thm:ClassificationOfLiftingsViaSplittings} above in the $\cV$-fusion setting.


\subsection{Fully faithful enrichment}

Suppose $(\cC, \cF^{\scriptscriptstyle Z})$ is a $\cV$-fusion category such that
$\cF^{\scriptscriptstyle Z}$ is fully faithful. This type of example is particularly important,
since every enrichment can be ``pushed forward" to a fully faithful enrichment by considering the
enrichment over the full subcategory generated by the image of $\cV$ in $Z(\cC)$. We will see that
in the fully faithful setting, the $G$-action on the normal subgroup $\Aut_{\otimes}(\cF^\sZ)$ is
trivial, and thus splitting of the short exact sequence \eqref{eq:ExactSequenceEquivariantFunctor}
becomes a 2-cocycle obstruction.

Now suppose $\cD$ is any $G$-graded extension of $\cC$ as an ordinary fusion category, so we get a
categorical action $\urho: \uG \to \uAut_\otimes^{\br}(Z(\cC))$. Assume that $\urho$ passes the
first obstruction, so that for each $g\in G$, there exists a monoidal natural isomorphism
$\lambda^g: \cF \Rightarrow g\circ \cF$. By a direct computation, we see that
\begin{equation}
\label{eq:2CocycleForNondegenerateEnrichment}
\omega(g,h):=(\lambda^{gh})^{-1}\circ \mu_{g,h}^{\cF}\circ g(\lambda^h)\circ \lambda^g : \cF \Rightarrow \cF
\end{equation}
is an element of $\Aut_\otimes(\cF) \cong \Aut_\otimes(\id_\cV)$, which is in turn isomorphic to the
group $\widehat{\cU(\cV)}$ of characters on the universal grading group of $\cV$. In fact $\omega\in
Z^2(G,\widehat{\cU(\cV)})$. Any other choice of $\lambda^g$ for $g\in G$ will give a cohomologous
2-cocycle. We see directly that the second obstruction vanishes if and only if $[\omega]=0$ in
$H^2(G, \widehat{\cU(\cV)})$. Hence the exact sequence \eqref{eq:ExactSequenceEquivariantFunctor} is
exactly
$$
\begin{tikzcd}
1
\ar[rr]
&&
\widehat{\cU(\cV)}
\ar[rr, "\iota"]
&&
\widehat{\cU(\cV)}\times_\omega G
\ar[rr, "\pi"]
&&
G
\ar[rr]
&&
1,
\end{tikzcd}
$$
which splits if and only if $[\omega]=0$.




Observe that when $\rho$ passes the first obstruction, the 2-cocycle $\omega$ in
\eqref{eq:2CocycleForNondegenerateEnrichment} automatically vanishes if $\widehat{\cU(\cV)}$ is
trivial, in which case there is a unique splitting.

\begin{cor}
Suppose $(\cC,\cF^{\scriptscriptstyle Z})$ is a $\cV$-fusion category with $\cF^{\scriptscriptstyle Z}$ fully faithful.
Let $\cD$ be an arbitrary $G$-graded extension of $\cC$ for which the first obstruction vanishes.
If $\widehat{\cU(\cV)}$ is trivial, then the $\cV$-enrichment has a unique lifting to $\cD$.
\end{cor}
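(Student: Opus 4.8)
The plan is to reduce the entire statement to the splitting analysis of the short exact sequence \eqref{eq:ExactSequenceEquivariantFunctor} set up above. By Theorem \ref{thm:ClassificationOfVEnrichmentsAsEquivariantLifts}, the compatible liftings $\widetilde{\cF}^{\scriptscriptstyle Z}: \cV \to Z(\cD)$ are in bijection with $G$-equivariant structures on $\cF^{\scriptscriptstyle Z}$ relative to the categorical action $\urho: \uG \to \uAut_\otimes^{\br}(Z(\cC))$ afforded by $\cD$; and Theorem \ref{thm:ClassificationOfLiftingsViaSplittings}, which applies precisely because we have assumed the first obstruction vanishes, identifies these $G$-equivariant structures with splittings of \eqref{eq:ExactSequenceEquivariantFunctor}. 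So it suffices to show this sequence admits exactly one splitting.

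First I would record that, since $\cF^{\scriptscriptstyle Z}$ is fully faithful, the discussion preceding the corollary identifies $\Aut_\otimes(\cF^{\scriptscriptstyle Z})$ with $\Aut_\otimes(\id_\cV) \cong \widehat{\cU(\cV)}$, with trivial $G$-action. Under the hypothesis that $\widehat{\cU(\cV)}$ is trivial, the kernel term of \eqref{eq:ExactSequenceEquivariantFunctor} is therefore the trivial group, so the sequence reads
\[
\begin{tikzcd}
1 \ar[r] & 1 \ar[r, "\iota"] & \Aut_\otimes(I \circ \cF^{\scriptscriptstyle Z}) \ar[r, "\pi"] & G \ar[r] & 1.
\end{tikzcd}
\]
Exactness (established in the Proposition above, which uses the vanishing of the first obstruction to obtain surjectivity of $\pi$ via Lemma \ref{lem:PiPreimage}) now forces $\ker(\pi) = \im(\iota) = 1$, so $\pi$ is injective as well, hence a group isomorphism. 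A splitting is a homomorphism $\sigma: G \to \Aut_\otimes(I \circ \cF^{\scriptscriptstyle Z})$ with $\pi \circ \sigma = \id_G$; since $\pi$ is an isomorphism, the only such $\sigma$ is $\pi^{-1}$. Thus there is exactly one splitting, and correspondingly exactly one compatible lifting of the $\cV$-enrichment to $\cD$.

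I do not expect a serious obstacle here beyond correctly threading the chain of bijections. The one point worth stating carefully is that the triviality of the coefficient group is used twice, and the isomorphism-$\pi$ argument packages both cleanly: equivalently, one may phrase the conclusion cohomologically, noting that the obstruction class of \eqref{eq:2CocycleForNondegenerateEnrichment} lives in $H^2(G, \widehat{\cU(\cV)}) = 0$ so that a splitting exists, while the set of splittings is a torsor over $Z^1(G, \widehat{\cU(\cV)}) = 0$, which gives uniqueness. Either formulation yields the claimed unique lifting.
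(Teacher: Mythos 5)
Your proof is correct and follows essentially the same route as the paper: the paper likewise reduces the corollary to splittings of \eqref{eq:ExactSequenceEquivariantFunctor} via Theorems \ref{thm:ClassificationOfVEnrichmentsAsEquivariantLifts} and \ref{thm:ClassificationOfLiftingsViaSplittings}, identifies $\Aut_\otimes(\cF^{\scriptscriptstyle Z})\cong\widehat{\cU(\cV)}$ in the fully faithful case, and notes that triviality of $\widehat{\cU(\cV)}$ kills both the 2-cocycle obstruction \eqref{eq:2CocycleForNondegenerateEnrichment} and any ambiguity among splittings. Your observation that $\pi$ becomes an isomorphism, so $\sigma=\pi^{-1}$ is the unique splitting, is simply a cleaner packaging of the same fact.
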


\begin{example}
If $(\cC, \cF^\sZ)$ is a $\Fib$-fusion category and $\cD$ is a $G$-graded extension of $\cC$ for
which the first obstruction vanishes, then there is  unique lift of the $\Fib$ enrichment to $\cD$.
For an explicit example, one may consider $\cC=\Ad(E_8)$ and $\cD=E_8$.
\end{example}

\subsection{Zesting a trivial extension}

For convenience, we assume that $H^4(G,\bbC^\times)=(1)$. Recall a braided categorical action of $G$
on $Z(\cC)$ is called $G$-\emph{stable} if each  $g\in G$ acts by the identity functor. Such actions
are given by twisting the trivial action by a 2-cocycle $\omega \in H^2(G,
\Aut_\otimes(\id_{Z(\cC)})=H^2(G, \Inv(Z(\cC)))$ \cite{MR2677836}. Since $H^4(G,\bbC^\times)=(1)$,
we get a $G$-graded extension $\cD$ of $\cC$ called $\cC\boxtimes_{\omega} \Vec(G)$, which is
$\cC\boxtimes \Vec(G)$ as a linear category with the tensor product functor twisted by $\omega$.
Twisting the monoidal product by a 2-cocycle in this manner is sometimes called \textit{zesting}
c.f.~\cite{MR3641612}.

For such extensions, for \textit{any} enrichment $(\cC, \cF^{\scriptscriptstyle Z})$, the first
obstruction always vanishes, namely $g\circ \cF^{\scriptscriptstyle Z}\cong \cF^{\scriptscriptstyle
Z} $ since $g\cong \id_{Z(\cC)}$. If in addition $\cF^{\scriptscriptstyle Z}$ is fully faithful (or
more generally sends simple objects to simple objects), then we get a restriction map
$R:\Aut_\otimes(\id_{Z(\cC)})\cong \Inv(Z(\cC)) \to \Aut_\otimes(\id_{\cV})\cong
\widehat{\cU(\cV)}$, and the 2-cocycle \eqref{eq:2CocycleForNondegenerateEnrichment} corresponds to
the push forward of $R_{*}\omega\in H^{2}(G, \widehat{\cU(\cV)} )$. Thus we can extend the
enrichment $(\cC, \cF^{\scriptscriptstyle Z})$ if and only if $R_{*}\omega$ is trivial.

For a slightly more explicit example, when $\cV = \Rep(N)$ and $\cC=\Vec(N)$, we have $\Inv(Z(\cC))
\cong \widehat{N} \times Z(N)$ and $\widehat{\cU(\cV)}\cong Z(N)$. Then the push-forward map $R:
\widehat{N} \times Z(N) \to Z(N)$ is the canonical projection to the factor $Z(N)$. In particular,
for any group with $Z(N)=(1)$ and for any $\omega\in H^{2}(G, \widehat{N})$ (with the trivial
action of $G$ on $\widehat{N}$), we can lift the $\Rep(N)$ enrichment on $\Vec(N)$ to the zested extension
$\Vec(N)\boxtimes_{\omega} \Vec(G)$. 
In this case, the latter category is actually equivalent to
$\Vec(N\times G)$.
\begin{verbatim}

\end{verbatim}

\begin{verbatim}

\end{verbatim}

since the 3-cocyle obtained from $\omega$ via a connecting map in the
Lyndon-Hochschild-Serre spectral sequence associated to the short exact sequence 
$1\rightarrow N\rightarrow  N\times G\rightarrow G\rightarrow 1$ 
is trivial
(see \cite[Appendix]{MR2677836}).
Indeed, all differentials in the LHS spectral sequence are zero; we thank the referee for pointing this out.

\subsection{Fibered enrichments and group theoretical extensions}

In this example, we focus on $\Rep(N)$-fibered enrichments (recall Definition \ref{defn:fibered-enrichment}) with $\cC=\Vec(N)$ and $\cD = \Vec(E)$ for some normal subgroup $N \leq E$ corresponding to a fixed exact sequence 
\begin{equation}
\label{eq:GroupSES}
\begin{tikzcd}
1
\ar[r]
&
N
\ar[r]
&
E
\ar[r]
&
G
\ar[r]
&
1.
\end{tikzcd}
\end{equation}
We now analyze when we can extend the $\Rep(N)$-fibered enrichment on $\Vec(N)$ to $\Vec(E)$. The first step will be to analyze the categorical action of $G$ on the center, and in particular how it restricts to the fibered enrichment.

First, from the extension above we directly define a braided categorical action on $\Rep(N)$. 
Pick a set theortical section $\lambda: G \to E$ of the quotient map $E\rightarrow G$ which we will denote $g\mapsto\lambda_{g}\in E$. 
Then we have $\lambda_{g}\lambda_{h}=\lambda_{gh}n_{g,h}$ for some $n_{g,h}\in N$.
For each $g\in G$, we define $\alpha_{g}\in \uAut^{\br}_{\otimes}(\Rep(N))$ by $\alpha_{g}(\pi, V):=(\pi(\lambda^{-1}_{g}\cdot \lambda_{g}), V)$ on objects, and we set $\alpha_{g}$ to be the identity on morphisms. This has the obvious structure of a (braided) monoidal functor.
We now define monoidal natural isomorphisms $\mu_{g,h}: \alpha_{g}\circ \alpha_{h}\rightarrow\alpha_{gh}$. 
For each $(\pi, V)\in \Rep(N)$, consider the linear map $\pi(n_{g,h})$ on the vector space $V$. Then we have 
\begin{align*}
\pi(n_{g,h})\pi (\lambda^{-1}_{h}\lambda^{-1}_{g} - \lambda_{g} \lambda_{h})
&=\pi(n_{g,h})\pi(n^{-1}_{g,h}\lambda^{-1}_{gh}- \lambda_{gh} n_{g,h})
=\pi(\lambda^{-1}_{gh}- \lambda_{gh}) \pi(n_{g,h}).
\qquad\qquad
\forall g,h\in G.
\end{align*}
Setting $\mu_{g,h}:=\{\mu^{(\pi, V)}_{g,h}:= \pi(n_{g,h})\}_{(\pi, V)\in \Rep(N)}$, we see $\mu_{g,h}:\alpha_{g}\circ \alpha_{h}\rightarrow \alpha_{gh}$ gives a monoidal natural isomorphism of functors.

\begin{lem}\label{lem:actiononrep}
The assignment $g\mapsto \alpha_{g}\in \uAut^{\br}_{\otimes}(\Rep(G))$ together with the monoidal natural isomorphisms $\mu_{g,h}:\alpha_{g}\circ \alpha_{h}\rightarrow \alpha_{gh}$ described above assembles into a categorical action $\alpha: \underline{G}\rightarrow \uAut^{\br}_{\otimes}(\Rep(N))$.
\end{lem}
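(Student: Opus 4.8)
The plan is to show that the data $(\alpha_g, \mu_{g,h})$ assemble into a monoidal $2$-functor $\uG \to \uAut^{\br}_{\otimes}(\Rep(N))$, for which three things must hold: each $\alpha_g$ is a braided strong monoidal auto-equivalence, each $\mu_{g,h}$ is a monoidal natural isomorphism, and the tensorators satisfy the associativity (and unit) coherences. The first two are essentially already in hand. Since $N \trianglelefteq E$, conjugation by $\lambda_g$ sends $N$ to $N$, so $\alpha_g$ is a well-defined functor; because conjugating the $N$-action changes neither underlying vector spaces, nor tensor products, nor the symmetric flip, $\alpha_g$ is in fact \emph{strict} monoidal (identity tensorator) and braided, hence lies in $\uAut^{\br}_{\otimes}(\Rep(N))$, with inverse furnished by conjugation by $\lambda_g^{-1}$. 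The preceding computation already verifies that each $\mu_{g,h}$, with component $\pi(n_{g,h})$, is an intertwiner $\alpha_g\circ\alpha_h \Rightarrow \alpha_{gh}$ and is monoidal. So the only real content left is the coherence of $\mu$.

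First I would normalize the section so that $\lambda_e = e$; then $\alpha_e = \id_{\Rep(N)}$ and $n_{e,g} = n_{g,e} = e$, which makes all unit constraints trivial and disposes of the unit axioms. The key step is then the associativity coherence: for all $g,h,k\in G$ the two composites $\alpha_g\circ\alpha_h\circ\alpha_k \Rightarrow \alpha_{ghk}$ given by $\mu_{gh,k}\circ(\mu_{g,h}\ast\id_{\alpha_k})$ and $\mu_{g,hk}\circ(\id_{\alpha_g}\ast\mu_{h,k})$ must coincide. I would evaluate both sides componentwise at an object $(\pi,V)\in\Rep(N)$. The right whiskering feeds $\alpha_k(\pi,V)$ into $\mu_{g,h}$, so its component is $(\alpha_k\pi)(n_{g,h}) = \pi(\lambda_k^{-1} n_{g,h}\lambda_k)$, whereas the left whiskering applies $\alpha_g$ (the identity on morphisms) to $\pi(n_{h,k})$, giving just $\pi(n_{h,k})$. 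Thus the two composites read $\pi(n_{gh,k}\,\lambda_k^{-1}n_{g,h}\lambda_k)$ and $\pi(n_{g,hk}\,n_{h,k})$, so coherence holds for every $\pi$ if and only if
\[
n_{gh,k}\cdot \lambda_k^{-1} n_{g,h}\lambda_k = n_{g,hk}\cdot n_{h,k}\qquad\text{in } N.
\]

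Finally I would recognize this as precisely the associativity of the triple product $\lambda_g\lambda_h\lambda_k$ in $E$: expanding $(\lambda_g\lambda_h)\lambda_k$ and $\lambda_g(\lambda_h\lambda_k)$ via $\lambda_a\lambda_b = \lambda_{ab}n_{a,b}$ and cancelling the common leading factor $\lambda_{ghk}$ yields exactly the displayed identity. The step I expect to carry the weight is this coherence check, but it is not a genuine obstacle once set up correctly; the only subtlety worth care is the whiskering bookkeeping—namely that $\alpha_k$ conjugates the argument of $\mu_{g,h}$, which is what forces the \emph{conjugated} term $\lambda_k^{-1}n_{g,h}\lambda_k$ and hence the nonabelian factor-set condition rather than a naive abelian cocycle relation. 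Having matched the coherence to associativity in $E$, the lemma follows.
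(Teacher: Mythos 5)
Your proof is correct and takes essentially the same approach as the paper: the paper likewise reduces the coherence of $\mu$ to the identity $n_{gh,k}\,\lambda_k^{-1}n_{g,h}\lambda_k = n_{g,hk}\,n_{h,k}$ in $N$ (calling it a ``cocycle-type equation'') and verifies it by expanding $\lambda_g\lambda_h\lambda_k$ in two ways, exactly as you do. The details you supply beyond this---strictness and braidedness of $\alpha_g$, the whiskering bookkeeping that produces the conjugated term, and the normalization $\lambda_e=e$---are points the paper either establishes just before the lemma or dismisses as ``a quick computation,'' and your accounting of them is accurate.
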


\begin{proof}
A quick computation shows that the equation we need to verify for all $g,h,k\in G$ and all representations $(\pi, V)$ is the cocycle-type equation
\begin{equation}
\label{eq:CocycleTypeEquation}
\pi(n_{gh,k}\lambda^{-1}_{k} n_{g,h}\lambda_{k})=\pi(n_{g,hk}n_{h,k}).
\end{equation}
From the definition of $n_{g,h}$,
we have
$$
\lambda_{g}\lambda_{h} \lambda_{k}
=
\lambda_{gh}n_{g,h}\lambda_{k}=\lambda_{gh}\lambda_{k} \lambda^{-1}_{k} n_{g,h}\lambda_{k}=\lambda_{ghk} n_{gh,k}\lambda^{-1}_{k} n_{g,h}\lambda_{k}.
$$
On the other hand, we also have
$$
\lambda_{g}\lambda_{h} \lambda_{k}=\lambda_{g} \lambda_{hk} n_{h,k}=\lambda_{ghk} n_{g,hk}n_{h,k}.
$$
Comparing these two expressions, we see $n_{gh,k}\lambda^{-1}_{k} n_{g,h}\lambda_{k}=n_{g,hk}n_{h,k}$ in $N$, so \eqref{eq:CocycleTypeEquation} holds for any representation of $N$.
\end{proof}

Now, we consider $\Vec(E)$ as a $G$-extensions of $\Vec(N)$. This yields a braided categorical action which we denote $\tilde{\alpha}:\underline{G}\rightarrow \uAut^{\br}_{\otimes}(Z(\cC))$.

\begin{lem}
The categorical action $\tilde{\alpha}$ restricts on the canonical copy of $\Rep(N)\subseteq Z(\Vec(N))$ to $\alpha$ defined in Lemma \ref{lem:actiononrep}.
\end{lem}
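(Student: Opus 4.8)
The plan is to make the relative center, its $G$-action, and the canonical copy of $\Rep(N)$ all explicit, and then compare. Since morphisms in $\Vec(E)$ are degree preserving, an object of the relative center $Z_{\Vec(N)}(\Vec(E))$ is an $E$-graded vector space $Y=\bigoplus_{x\in E}Y_x$ equipped, for each $n\in N$, with a half-braiding $\tau_{\delta_n,Y}\colon \delta_n\otimes Y\to Y\otimes \delta_n$; as these are degree preserving, their components are linear maps $Y_x\to Y_{nxn^{-1}}$. Thus objects of $Z_{\Vec(N)}(\Vec(E))$ are exactly $N$-equivariant bundles over $E$ for the conjugation action of $N$ on $E$. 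The $G=E/N$ grading is read off from the support of $Y$, and the trivial component, consisting of bundles supported on $N\subseteq E$, is $Z(\Vec(N))$ as in \eqref{eq:RelativeCenterEquivariantization}. Under this description, the canonical copy of $\Rep(N)$ from Definition \ref{defn:CanonicalCopyOfRepGFromGrading} is the full subcategory of objects supported at $e\in E$: the object $\cI_\pi$ is $K$ placed in degree $e$, and its half-braiding $\zeta_{\delta_m,\cI_\pi}=\pi(m)$ equips the fiber $K$ with the $N$-action $\pi$.

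Next I would recall from \cite{MR2587410} that the $G$-action underlying the $G$-crossed braided structure on $Z_{\Vec(N)}(\Vec(E))$, which restricts to $\tilde\alpha$ on the trivial component $Z(\Vec(N))$, is implemented by conjugation by an invertible object of each graded piece; I take the simple object $\delta_{\lambda_g}\in \Vec(E)_g$ at the chosen section value $\lambda_g$, with the side of conjugation fixed to match the convention of $\alpha$, so that $\tilde\alpha_g(Y,\tau)=(\delta_{\lambda_g}\otimes Y\otimes \delta_{\lambda_g^{-1}},\tau')$ with transported relative half-braiding $\tau'$. The key computation is the transport: for $n\in N$, the identity $\delta_n\otimes\delta_{\lambda_g}=\delta_{\lambda_g}\otimes\delta_{m'}$ with $m':=\lambda_g^{-1}n\lambda_g\in N$ (using normality of $N$) lets one slide $\delta_n$ past $\delta_{\lambda_g}$, apply the original half-braiding $\tau_{\delta_{m'},Y}$, and slide back past $\delta_{\lambda_g^{-1}}$, returning the degree to $n$. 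Applied to $\cI_\pi$, whose half-braiding with $\delta_{m'}$ is $\pi(m')=\pi(\lambda_g^{-1}n\lambda_g)$, this shows $\tilde\alpha_g(\cI_\pi)$ is again supported at $e$ with fiber $K$ and $N$-action $n\mapsto\pi(\lambda_g^{-1}n\lambda_g)$, which is exactly $\alpha_g(\pi,K)$. In particular $\tilde\alpha_g$ preserves the canonical copy of $\Rep(N)$ and agrees with $\alpha_g$ on objects and morphisms.

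Finally I would upgrade this to an equality of categorical actions by matching the tensorators. Composing conjugations gives $\tilde\alpha_g\circ\tilde\alpha_h(Y)=\delta_{\lambda_g\lambda_h}\otimes Y\otimes\delta_{\lambda_h^{-1}\lambda_g^{-1}}$, and the relation $\lambda_g\lambda_h=\lambda_{gh}n_{g,h}$ yields a canonical isomorphism to $\tilde\alpha_{gh}(Y)=\delta_{\lambda_{gh}}\otimes Y\otimes\delta_{\lambda_{gh}^{-1}}$; on $\cI_\pi$ this comparison isomorphism acts on the fiber by $\pi(n_{g,h})$, which is precisely the component of the tensorator $\mu_{g,h}$ defined in Lemma \ref{lem:actiononrep}. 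Because $\Vec(E)$ has trivial associator, no additional scalar cocycle enters and the comparison is strict, so $\tilde\alpha$ and $\alpha$ agree together with their coherence data. I expect the main obstacle to be exactly this last bookkeeping: verifying that the transported half-braidings $\tau'$ are coherent and that the comparison isomorphisms intertwine all of $\tilde\alpha$'s structure (tensorators and unit constraint) with $\mu_{g,h}$, rather than merely agreeing on underlying objects. Everything else is a direct unwinding of the constructions in \eqref{eq:RelativeCenterEquivariantization} and Definition \ref{defn:CanonicalCopyOfRepGFromGrading}.
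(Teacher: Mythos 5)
Your proof is correct, and its computational core is the same as the paper's: both arguments come down to choosing the set-theoretic section $g\mapsto\lambda_g$, observing that moving $\delta_n$ past $\delta_{\lambda_g}$ twists by $\lambda_g^{-1}n\lambda_g$, and extracting the tensorator from $\lambda_g\lambda_h=\lambda_{gh}n_{g,h}$. The framing differs, though. The paper works in the ENO bimodule picture: it decomposes $\Vec(E)\cong\bigoplus_{g\in G}{}_g\Vec(N)$ as a $\Vec(N)$-bimodule, identifies ${}_g\Vec(N)\cong\Vec(N)$ as a right module via $\lambda_g$ with the left action twisted by $\lambda_g^{-1}\,\cdot\,\lambda_g$, and then appeals to the formula (Eq.~24) of \cite{MR2677836} for the braided autoequivalence of the center induced by an invertible bimodule. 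You instead work in the relative-center picture of \cite{MR2587410}, describing $Z_{\Vec(N)}(\Vec(E))$ as $N$-equivariant bundles over $E$ and realizing $\tilde{\alpha}_g$ as conjugation by $\delta_{\lambda_g}$. Your route buys an explicit verification of the coherence data, namely that the comparison $\tilde{\alpha}_g\circ\tilde{\alpha}_h\Rightarrow\tilde{\alpha}_{gh}$ acts on $\cI_\pi$ by $\pi(n_{g,h})=\mu_{g,h}^{(\pi,V)}$, which the paper leaves implicit in its citation; the paper's route avoids your one soft step, the claim that the $G$-action on the relative center is implemented by conjugation by invertible homogeneous objects. That claim is valid here precisely because $\Vec(E)$ is pointed — unwinding ENO's Eq.~24 for the bimodule ${}_g\Vec(N)$, all of whose simple objects have the form $\delta_{\lambda_g}\otimes\delta_n$, yields exactly conjugation by $\delta_{\lambda_g}$ up to canonical isomorphism — but it deserves a line of justification rather than an appeal to convention: ``fixing the side of conjugation to match the convention of $\alpha$'' risks circularity, so you should instead fix the side by matching the defining formula of the action in \cite{MR2677836}, and note that replacing $\delta_{\lambda_g}$ by another invertible object $\delta_{\lambda_g n}$ of $\cD_g$ changes the conjugation functor only up to the isomorphism supplied by the half-braiding, so the resulting action is well defined up to equivalence of categorical actions, which is all the lemma (and the corollary that uses it) requires.
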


\begin{proof}
Recall that as a $\Vec(N)$ bimodule, $\Vec(E)\cong \bigoplus_{g\in G} {}_g \Vec(N)$, where here ${}_g \Vec(N)$ can be viewed as the linear category of vector spaces graded by elements of the coset indexed by $g\in G$. 
Let us consider the section $G\ni g\mapsto \lambda_{g}\in E$ chosen above. 
We can identify the simple objects of ${}_{g} \Vec(N)$ as elements $\lambda_{g}n$ for $n\in N$. 
Furthermore $_{g}\Vec(N)\cong \Vec(N)$ as a right $N$ module, where $\lambda_{g} n^{\prime} \triangleleft n:=\lambda_{g}n^{\prime}n$, but the left action of $N$ on ${}_{g}\Vec(N)$ is given by $n\triangleright \lambda_{g} n^{\prime}=\lambda_{g} (\lambda^{-1}_{g} n \lambda_{g}) n^{\prime}$. 
In other words, the left action is twisted by the auto-equivalence $\lambda^{-1}_{g} \cdot \lambda_{g}\in \operatorname{Aut}(N)$.
From the definition of the categorical action $\tilde{\alpha}$ \cite[Eq.~24]{MR2677836} and the canonical copy of $\Rep(N)\subseteq Z(\Vec(N))$, the result follows.
\end{proof}


\begin{cor} 
The canonical $\Rep(N)$-fibered enrichment of $\Vec(N)$ extends to $\Vec(E)$ if and only if $E\cong N\times G $.
In this case, these extensions form a torsor over $H^{1}(G, Z(N))$.
\end{cor}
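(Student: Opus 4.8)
The plan is to specialize the fully faithful analysis of the preceding subsection to $\cV=\Rep(N)$, $\cC=\Vec(N)$, and the categorical action $\tilde{\alpha}$ coming from the extension \eqref{eq:GroupSES}. Since a $\Rep(N)$-fibered enrichment is by definition fully faithful, the functor $\cF^{\scriptscriptstyle Z}=\cI^{\scriptscriptstyle Z}$ is fully faithful, so by Theorem \ref{thm:ClassificationOfVEnrichmentsAsEquivariantLifts} and Theorem \ref{thm:ClassificationOfLiftingsViaSplittings} the compatible extensions of the enrichment to $\Vec(E)$ are in bijection with $G$-equivariant structures on $\cF^{\scriptscriptstyle Z}$, hence with splittings of \eqref{eq:ExactSequenceEquivariantFunctor}, whose kernel is $\Aut_\otimes(\cF^{\scriptscriptstyle Z})\cong\Aut_\otimes(\id_{\Rep(N)})\cong\widehat{\cU(\Rep(N))}\cong Z(N)$. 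By the preceding lemma, $\tilde{\alpha}$ restricts on the canonical $\Rep(N)\subseteq Z(\Vec(N))$ to $\alpha$ of Lemma \ref{lem:actiononrep}, where $\alpha_g$ is precomposition with $c_g(n):=\lambda_g^{-1}n\lambda_g$ and the tensorator acts on $(\pi,V)$ by $\mu_{g,h}^{(\pi,V)}=\pi(n_{g,h})$.

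First I would dispose of the first obstruction. The condition $g\circ\cF^{\scriptscriptstyle Z}\cong\cF^{\scriptscriptstyle Z}$ is exactly $\alpha_g\cong\id_{\Rep(N)}$, and since $\alpha_g=(-)\circ c_g$, Tannaka--Krein duality (e.g.\ \cite{MR3242743}) identifies this with the requirement that $c_g$ be an \emph{inner} automorphism of $N$; equivalently, that the outer action $G\to\operatorname{Out}(N)$ determined by \eqref{eq:GroupSES} be trivial. If it is nontrivial, no lift exists, and simultaneously $E\not\cong N\times G$ (the direct product has trivial outer action), so both sides of the claimed equivalence fail. Otherwise I would re-choose the set-theoretic section so that $\lambda_g\in C_E(N)$ for every $g$; then $c_g=\id_N$, so $\alpha_g=\id_{\Rep(N)}$ on the nose and each $\lambda^g=\id$ witnesses the vanishing of the first obstruction.

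Next the second obstruction. With every $\lambda^g=\id$, the cocycle \eqref{eq:2CocycleForNondegenerateEnrichment} collapses to $\omega(g,h)=\mu_{g,h}$, i.e.\ $\pi(n_{g,h})$ on $(\pi,V)$. Because the $\lambda_g$ now centralize $N$, the element $n_{g,h}=\lambda_{gh}^{-1}\lambda_g\lambda_h$ lies in $N\cap C_E(N)=Z(N)$, so by Schur's lemma $\pi(n_{g,h})=\chi_\pi(n_{g,h})\,\id_V$, the scalar given by the central character $\chi_\pi\in\widehat{Z(N)}$. The crucial identification, which I expect to be the main point of the argument, is that under $\Aut_\otimes(\id_{\Rep(N)})\cong\widehat{\cU(\Rep(N))}=\widehat{\widehat{Z(N)}}\cong Z(N)$ (the universal grading of $\pi$ being its central character) this monoidal automorphism corresponds precisely to $n_{g,h}\in Z(N)$. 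Hence $[\omega]\in H^2(G,Z(N))$ equals the class $[n_{g,h}]$ of the group extension \eqref{eq:GroupSES}, computed with trivial $G$-action on $Z(N)$ since the $\lambda_g$ centralize $Z(N)$.

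Finally I would assemble the conclusion. As in the fully faithful discussion, a splitting of \eqref{eq:ExactSequenceEquivariantFunctor} exists if and only if $[\omega]=0$; combined with the first obstruction this says the outer action is trivial \emph{and} $[n_{g,h}]=0$, which is exactly the assertion that \eqref{eq:GroupSES} is the trivial extension, i.e.\ $E\cong N\times G$. For the torsor statement, when $E\cong N\times G$ the sequence \eqref{eq:ExactSequenceEquivariantFunctor} is a split central extension of $G$ by $Z(N)$, so its set of splittings is a torsor over $H^1(G,Z(N))=\Hom(G,Z(N))$, and Theorem \ref{thm:ClassificationOfLiftingsViaSplittings} transports this to the set of enrichment extensions. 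The main obstacle is the cocycle identification of the third step; by comparison the Tannaka--Krein input for the first obstruction and the verification that $n_{g,h}\in Z(N)$ are routine.
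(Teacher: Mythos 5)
Your proposal is correct and takes essentially the same route as the paper: both reduce via the fully faithful obstruction theory and the restriction lemma to trivializing the action $\alpha$, identify the first obstruction with triviality of the conjugations $c_g$ (the paper deduces $\lambda_g^{-1}n\lambda_g=n$ via faithfulness of the regular representation, while you use Tannaka duality plus re-choosing the section so $\lambda_g\in C_E(N)$ --- the same adjustment the paper leaves implicit when passing from ``isomorphic to trivial'' to equality), and then identify the second-obstruction cocycle $\omega(g,h)=\pi(n_{g,h})$ with the group-extension class in $H^2(G,Z(N))$ under $\Aut_\otimes(\id_{\Rep(N)})\cong Z(N)$. Your explicit torsor step (splittings of a split extension with central kernel and trivial action form a torsor over $Z^1(G,Z(N))=\Hom(G,Z(N))=H^1(G,Z(N))$) is precisely what the paper's ``the final claim follows easily'' elides.
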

\begin{proof}
Since the canonical fibered enrichment is fully faithful, by the previous lemma we can lift the enrichment if and only if the categorical action $\alpha: \underline{G}\rightarrow \uAut^{\br}_{\otimes}(\Rep(G))$ is isomorphic to the trivial categorical action. 
This would imply, in particular, that each $\alpha_{g}$ is trivial, namely that $\pi(\lambda^{-1}_{g}n \lambda_{g})=\pi(n)$ for all $n\in N$, $g\in G$, and $(\pi, V)\in \Rep(N)$. 
Applying this to the regular representation implies $\lambda^{-1}_{g}n\lambda_{g}=n$, and thus we have a decomposition $E\cong N\times_{\omega} G$ for some 2-cocycle $\omega\in Z^{2}(G, Z(N))$, where the action on the latter coefficient module is trivial. 
Furthermore, we see this 2-cocycle $\omega_{g,h}$ is precisely the $n_{g,h}$ associated to our choice of $\lambda$. 
But since the tensorator for the action $\alpha$ is given by $\mu^{(\pi, V)}_{g,h}=\pi(n_{g,h})$ by definition, we see that the action $\alpha$ is precisely the trivial action twisted by $\omega$. 
Therefore, $\alpha$ is isomorphic to the trivial action precisely when $[\omega]$ is trivial in $H^{2}(G, Z(N))$, which happens precisely when $E$ splits as $N\times G$.
The final claim follows easily.
\end{proof}

\section{Application: classification of \texorpdfstring{$G$}{G}-crossed braidings}
\label{sec:GCrossedBraidings}

An interesting point of view we wish to advocate is that various sorts of structures on a $G$-graded
extension can be equivalent to extensions of an enrichment on the base category. In particular, a
braided fusion category can be canonically enriched over itself. 
In this section, our goal is to
show that (equivalence classes of) $G$-crossed braidings on a $G$-graded fusion category $\cD$ which
restrict on the trivial graded component $\cC$ to some fixed braiding are exactly classified by
extensions of the corresponding self enrichment of $\cC$ to $\cD$.

While this proof essentially boils down to results in \cite{MR2677836,MR3107567,2006.08022} using \cite{MR2678824}, we believe our
point of view sheds new light on $G$-crossed braidings while simultaneously providing intuition for
enriched extensions as being `something like a $G$-crossed braiding'. We then apply our earlier
results to give a classification of $G$-crossed braidings generalizing the results of Nikshych
\cite{MR3943750}. 
This allows us to classify $G$-crossed braidings on a $G$-graded fusion category
$\cD$ in terms of full subcategories of its Drinfeld center, satisfying some conditions.

\subsection{The canonical self-enrichment and \texorpdfstring{$G$}{G}-crossed braidings}

Fix a braided fusion category $\cC$ with braiding $\beta$.

\begin{defn}
The canonical \emph{self-enrichment} $\cC \to Z(\cC)$ is given by $c\mapsto (c,\beta_{c,-})$.
\end{defn}

In \S\ref{sec:ClassificationByMonoidal2Functors}, we defined a monoidal product on $\uuBrPic^\cC(\cC)$ via lifting the product on $\uuBrPic(\cC)$ determined by the universal property discussed in Definition \ref{defn:RelativeDeligneProduct} via the forgetful 2-functor $\Forget_\cC$, which automatically makes $\Forget_\cC$ a monoidal 2-functor.

Recall from \cite[\S4.4]{MR2677836}, \cite[\S2.8]{MR3107567}, or \cite[\S3.3 and 5.2]{2006.08022} that the monoidal 2-groupoid of invertible $\cC$-modules $\uuPic(\cC)$ is also endowed with a monoidal product by lifting the relative product from $\uuBrPic(\cC)$.
In more detail, there is a canonical inclusion 2-functor $\uuPic(\cC) \to \uuBrPic(\cC)$ which identifies the right action with the left action, and one lifts the monoidal product to make this canonical inclusion into a monoidal 2-functor.

Observe now that this monoidal 2-functor $\uuPic(\cC) \to \uuBrPic(\cC)$ natrually factors through $\uuBrPic^\cC(\cC)$!
Indeed, when one defines the right action on an invertible $\cC$-module $\cM$ as equal to the left action, we get an obvious $\cC$-centered structure $\eta^\cM$ given by the identity.
Since the monoidal products on $\uuPic(\cC)$ and $\uuBrPic^\cC(\cC)$ were both lifted from $\uuBrPic(\cC)$, we see we have a commuting triangle:
\begin{equation}
\label{eq:PicFactorsThroughBrPicCC}
\begin{tikzcd}[column sep=6em]
\uuPic(\cC)
\arrow[r,"{(-\vartriangleleft \cC:=\cC\vartriangleright - ,\eta)}"]
\arrow[dr, swap, "-\vartriangleleft \cC:=\cC\vartriangleright - "]
&
\uuBrPic^\cC(\cC)
\arrow[d, swap,"\Forget_{\cC}"']
\\
&
\uuBrPic(\cC)
\end{tikzcd}
\end{equation}
The horizontal arrow in \eqref{eq:PicFactorsThroughBrPicCC} is easily seen to be an equivalence of the underlying 2-groupoids, with inverse (up to equivalence) given by forgetting the right $\cC$-action (cf.~\cite[Def.~2.12 and Rem.~2.13]{MR3107567}).

We now fix a braided fusion category $\cC$ together with a $G$-extension $\cC \subseteq \cD$ as
ordinary fusion categories corresponding to a monoidal 2-functor $\uurho: \uuG \to \uuBrPic(\cC)$ from
\cite{MR2677836}. We are now ready to prove Theorem \ref{thm:ClassificationOfGCrossedBraidings},
which is $(1)\cong (4)$ of the following theorem.

\begin{thm}
\label{thm:ClassificationOfGCrossedBraidingsUsingBrPicC}
Fix a braided fusion category $\cC$ and a $G$-extension $\cC\subset \cD=\bigoplus_{g\in G} \cD_g$ as an ordinary fusion category.
Let $\uupi: \uuG \to \uuBrPic(\cC)$ be any monoidal 2-functor corresponding to $\cD$ under Theorem \ref{thm:ExtensionsAsMonoidal2Functors}.
The following sets are in canonical bijection:
\begin{enumerate}
\item
Lifts of the self $\cC$-enrichment $\cF^\sZ: \cC \to Z(\cC)$ to $\cD$, i.e.,
braided tensor functors
$\widetilde{\cF}^{\sZ}: \cC \to \Rep(G)'\subset Z(\cD)$
such that 
$\Forget_\cC\circ \widetilde{\cF}^{\sZ}=i\circ \cF^{\sZ}$ where $i: Z(\cC) \hookrightarrow Z_\cC(\cD)$ is the canonical inclusion.
\item 
Lifts of $\uupi$ to $\uupi^\cC: \uuG \to \uuBrPic^\cC(\cC)$ such that $\Forget_\cC\circ \uupi^\cC = \uupi$ on the nose.
\item
Lifts of $\cD$ to $Z_{\cC}(\cD)$ that agree with the reversed self enrichment $\cF^{Z}_{\rev}:\cC^{\rev}\rightarrow Z(\cC)$, i.e., tensor functors $F:\cD \to Z_{\cC}(\cD)$ such that $\Forget_Z\circ F = \id_\cD$ on the nose and $F|_{\mathcal{C}}=i\circ \cF^{Z}_{\rev}$.
\item
The equivalence classes of $G$-crossed braidings on $\cD$ (cf.~Example \ref{example:SetOfGCrossedBraidings}).
\end{enumerate}
\end{thm}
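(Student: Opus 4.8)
The plan is to prove the three bijections $(1)\cong (2)$, $(2)\cong(4)$, and $(1)\cong(3)$, from which the equivalence of all four sets follows. The bijection $(1)\cong (2)$ is immediate: the self-enrichment $c\mapsto (c,\beta_{c,-})$ is a braided strong monoidal functor $\cC\to Z(\cC)$, so $(\cC,\cF^\sZ)$ really is a $\cC$-fusion category, and applying Theorem \ref{thm:LiftsToEnrichedBrPic} to it (with enrichment base $\cV=\cC$) identifies the lifts of $\cF^\sZ$ to $\cD$ with the lifts of $\uupi$ to $\uuBrPic^\cC(\cC)$ along $\Forget_\cC$.

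For $(2)\cong(4)$, I would combine the factorization \eqref{eq:PicFactorsThroughBrPicCC} with the Etingof--Nikshych--Ostrik classification of $G$-crossed braided extensions. Recall from \cite{MR2677836,MR3107567,2006.08022} (using the coherence of \cite{MR2678824}) that the $2$-groupoid of $G$-crossed braided extensions of $\cC$ is equivalent to $\Hom(\uuG\to\uuPic(\cC))$, compatibly with the forgetful $2$-functor to ordinary $G$-extensions $\Ext(G,\cC)\cong\Hom(\uuG\to\uuBrPic(\cC))$, the latter corresponding to post-composition with the inclusion $\uuPic(\cC)\to\uuBrPic(\cC)$. Consequently, equivalence classes of $G$-crossed braidings on the \emph{fixed} $\cD$ (as in Example \ref{example:SetOfGCrossedBraidings}) are exactly the $0$-truncation of the homotopy fiber over $\uupi$ of this post-composition functor. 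The key point is \eqref{eq:PicFactorsThroughBrPicCC}: the horizontal $2$-functor $\uuPic(\cC)\to\uuBrPic^\cC(\cC)$ is an equivalence of the underlying $2$-groupoids and the composite to $\uuBrPic(\cC)$ is $\Forget_\cC$. Post-composing with an equivalence is again an equivalence, so the homotopy fibers over $\uupi$ of $\Hom(\uuG\to\uuPic(\cC))\to\Hom(\uuG\to\uuBrPic(\cC))$ and of $\Hom(\uuG\to\uuBrPic^\cC(\cC))\to\Hom(\uuG\to\uuBrPic(\cC))$ are equivalent; $0$-truncating identifies $(4)$ with the set of lifts $(2)$.

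The remaining bijection $(1)\cong(3)$ is a transposition of half-braidings, extending \cite[Prop.~2.4]{1803.04949}. A functor in $(1)$ records, for each $c\in\cC$, a half-braiding $\{\sigma_{d,c}\colon d\otimes c\to c\otimes d\}_{d\in\cD}$ of $c$ with all of $\cD$ restricting to $\beta_{-,c}$ on $\cC$; a section in $(3)$ records, for each $d\in\cD$, a relative half-braiding $\{\tau_{c,d}\colon c\otimes d\to d\otimes c\}_{c\in\cC}$ of $d$ with $\cC$. I would send $\{\sigma_{d,c}\}$ to $\{\tau_{c,d}:=\sigma_{d,c}^{-1}\}$ and check that monoidality of $\widetilde{\cF}^\sZ$ in $c$ together with naturality in $d$ dualizes exactly to the relative half-braiding axioms for $F$ in the variable $c$ and monoidality in $d$, and conversely. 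On $\cC$ one has $\sigma_{c',c}=\beta_{c',c}$, so $\tau_{c,c'}=\beta_{c',c}^{-1}=\beta^{\rev}_{c,c'}$, which is precisely the condition $F|_\cC=i\circ\cF^Z_{\rev}$ and also fixes the reversal convention.

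I expect the main obstacle to be the careful bookkeeping in $(2)\cong(4)$. Because the forgetful $2$-functor $\Forget_\beta\colon\core(G\CrsBrd)\to\core(G\GrdFusCat)$ is \emph{not} a discrete fibration (Example \ref{example:SetOfGCrossedBraidings}), one cannot simply compare strict fibers and must instead argue at the level of $0$-truncated homotopy fibers, transporting the lifting problem along the equivalence \eqref{eq:PicFactorsThroughBrPicCC} before strictifying. Verifying that this transport is compatible with the fixed base point $\uupi$ and with the equivalence relation on $G$-crossed braidings from Example \ref{example:SetOfGCrossedBraidings} — rather than merely matching the ambient $2$-groupoids — is the delicate step; the transposition giving $(1)\cong(3)$ and the direct application of Theorem \ref{thm:LiftsToEnrichedBrPic} for $(1)\cong(2)$ are comparatively routine.
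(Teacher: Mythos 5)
Your proposal is correct and follows essentially the same route as the paper: $(1)\cong(2)$ as a direct specialization of Theorem \ref{thm:LiftsToEnrichedBrPic} to $\cV=\cC$, $(1)\cong(3)$ by inverting half-braidings (which indeed forces $F|_\cC = i\circ\cF^{Z}_{\rev}$), and $(2)\cong(4)$ by transporting the lifting problem along the commuting square built from \eqref{eq:PicFactorsThroughBrPicCC}, Theorem \ref{thm:ExtensionsAsMonoidal2Functors}, and the ENO equivalence $\Hom(\uuG\to\uuPic(\cC))\cong\Ext_{\CrsBrd}(G,\cC)$, comparing $0$-truncated homotopy fibers and strictifying only on the $\uuBrPic^\cC(\cC)$ side where $(\Forget_\cC)_*$ is a discrete fibration. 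Your closing caution about $\Forget_\beta$ not being a discrete fibration is exactly the point the paper handles the same way.
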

\begin{proof}
\item[$\underline{(1)\cong (2):}$]
This is a special case of Theorem \ref{thm:LiftsToEnrichedBrPic} with $\cV=\cC$ and $\cF^\sZ: \cC \to Z(\cC)$ the self-enrichment.

\item[$\underline{(1)\cong (3):}$]
Observe that lifts 
$F:\cD\to Z_\cC(\cD)$ 
such that $
\Forget_Z\circ F = \id_\cD$
are in bijection with 
lifts $\widetilde{\cF}^{\sZ}:\cC\to \Rep(G)'\subset Z(\cD)$ 
such that
$\Forget_\cC \circ \widetilde{\cF}^{\sZ} = i\circ \cF^\sZ$
by taking the inverse half-braiding.

\item[$\underline{(2)\cong (4):}$]
In Example \ref{ex:Lift2Functor} we saw that the set (2) of lifts of $\uupi : \uuG \to \uuBrPic(\cC)$ to $\uuBrPic^\cC(\cC)$ is the strict fiber $\stFib_{\uupi}((\Forget_\cC)_*)$ where $(\Forget_\cC)_* : \Hom(\uuG \to \uuBrPic^\cC(\cC)) \to \Hom(\uuG \to \uuBrPic(\cC))$ is post-composition with $\Forget_\cC: \uuBrPic^\cC(\cC) \to \uuBrPic(\cC)$.

By essentially the same argument as in Example \ref{example:SetOfGCrossedBraidings}, the equivalence classes (4) of $G$-crossed braidings is equivalent to the 0-truncation of the homotopy fiber over $\cD$
$\tau_0(\hoFib_\cD(\Forget_\beta))$
of the forgetful 2-functor $\Forget_\beta: \Ext_{\CrsBrd}(G,\cC) \to \Ext(G,\cC)$.

By Theorem \ref{thm:ExtensionsAsMonoidal2Functors}, we have an equivalence of 2-groupoids $\Hom(\uuG\to \uuBrPic(\cC)) \cong \Ext(G,\cC)$, and by \cite[Thm.~7.12]{MR2677836} (see also \cite[Prop.~8.11 and Thm.~8.13]{2006.08022}), there is an equivalence of 2-groupoids $\Hom(\uuG, \uuPic(\cC)) \cong \Ext_{\CrsBrd}(G,\cC)$, where the latter denotes the 2-groupoid of $G$-crossed braided extensions of $\cC$.
Since the horizontal arrow in \eqref{eq:PicFactorsThroughBrPicCC} above is a monoidal 2-equivalence, we see $\Hom(\uuG, \uuBrPic^\cC(\cC)) \cong \Ext_{\CrsBrd}(G,\cC)$.

Putting it all together, we have a (weakly) commuting square of 2-functors
$$
\begin{tikzcd}
\Hom(\uuG, \uuBrPic^\cC(\cC))
\arrow[d,"(\Forget_\cC)_*"]
\arrow[r,"\cong"]
&
\Ext_{\CrsBrd}(G,\cC)
\arrow[d,"\Forget_\beta"]
\\
\Hom(\uuG, \uuBrPic(\cC))
\arrow[r,"\cong"]
&
\Ext(G,\cC)
\end{tikzcd}
$$
Since $\uupi$ maps to $\cD$ under the lower horizontal arrow, the homotopy fibers at $\uupi$ and $\cD$ are equivalent.
We thus have canonical bijections
$$
\stFib_{\uupi}((\Forget_\cC)_*)
\cong
\tau_0(\hoFib_{\uupi}((\Forget_\cC)_*))
\cong
\tau_0(\hoFib_{\cD}(\Forget_\beta))
\cong
\left\{\,\parbox{4.2cm}{\rm Equivalence classes of $G$-crossed braidings on $\cD$}\,\right\}
$$
where $\tau_0$ denotes the 0-truncations of the homotopy fibers.
This completes the proof.
\end{proof}

\subsection{Classification of \texorpdfstring{$G$}{G}-crossed braidings on a fixed \texorpdfstring{$G$}{G}-graded fusion category}

We can use Theorem \ref{thm:ClassificationOfGCrossedBraidings} to obtain a classification of G-crossed braidings on a G-graded fusion category generalizing a similar style of classification by Nikshych of braidings on a fusion category \cite{MR3943750}. 
Recall that if $A\in \mathcal{C}$ is an algebra object, a subcategory $\mathcal{D}\subseteq \mathcal{C}$ is called \emph{transverse} to $A$ if for all objects $d\in \mathcal{D}$, $\mathcal{C}(d\to A)=\mathcal{C}(d\to 1)$.
Recall that a subcategory $\cD$ of a category $\cC$ is called \emph{replete} if for all triples $(c,d,f)$ with $c\in \cC$, $d\in \cD$, and $f: c \to d$ an isomorphism, we have $c\in \cD$ and $f\in \cD(c\to d)$.

\begin{thm}
\label{Gcrossedthm} 
Let $\cD=\bigoplus \cD_{g}$ be a faithfully $G$-graded fusion category and $\Rep(G)\subseteq Z(\cD)$ the canonical subcategory of the center. 
Then $G$-crossed braidings on $\cD$ are classified by full and replete fusion subcategories $\cA\subseteq Z(\cD)$ satisfying the following properties:
\begin{enumerate}
  \item
  $\cA\subseteq \Rep(G)^{\prime}$.
  \item
    $|G|\FPdim(\cA)=\FPdim(\cD)$.
    \item
    $\cA$ is transverse to $I(1)$, i.e. for any $a\in \cA$, $Z(\cD)(a\to I(1))=Z(\cD)(a\to 1)$, where $I$ is the right adjoint of the forgetful functor $\Forget_Z:Z(\cD) \to \cD$.
\end{enumerate}
\end{thm}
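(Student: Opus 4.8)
The plan is to reduce the statement to the bijection $(1)\cong(4)$ of Theorem~\ref{thm:ClassificationOfGCrossedBraidingsUsingBrPicC}. That theorem classifies $G$-crossed braidings restricting to a \emph{fixed} braiding on $\cD_e$, so I first stratify: the set of equivalence classes of $G$-crossed braidings on $\cD$ is the disjoint union, over braidings $\beta$ on $\cD_e$, of those restricting to $\beta$, and Theorem~\ref{thm:ClassificationOfGCrossedBraidingsUsingBrPicC} identifies the $\beta$-stratum with the lifts $\widetilde{\cF}^\sZ_\beta\colon(\cD_e,\beta)\to\Rep(G)'\subseteq Z(\cD)$ of the self-enrichment $\cF^\sZ_\beta$ satisfying $\Forget_\cC\circ\widetilde{\cF}^\sZ_\beta=i\circ\cF^\sZ_\beta$. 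To each such lift I associate the full replete fusion subcategory $\cA$ it generates, namely the replete closure of its essential image, and the goal is to show that $\cA\mapsto(\beta,\widetilde{\cF}^\sZ_\beta)$ is a bijection onto the subcategories satisfying (1)--(3), for all $\beta$ simultaneously. The crux is the reinterpretation of condition (3): since $I$ is right adjoint to $\Forget_Z$, adjunction gives $Z(\cD)(a\to I(1))\cong\cD(\Forget_Z a\to 1)$, with the inclusion $Z(\cD)(a\to 1)\subseteq Z(\cD)(a\to I(1))$ induced by the adjunction unit. Thus for a fusion subcategory $\cA$ (closed under $\otimes$ and duals), after the standard reduction $Z(\cD)(a\to a')\cong Z(\cD)(a\otimes a'^{*}\to 1)$ using rigidity and that $\Forget_Z$ preserves duals, transversality to $I(1)$ is equivalent to the statement that $\Forget_Z|_\cA$ is \emph{fully faithful}.

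\emph{Forward direction.} Given a lift $\widetilde{\cF}^\sZ_\beta$, the self-enrichment $\cF^\sZ_\beta$ is fully faithful and $\Forget_\cC$ is faithful, so $\widetilde{\cF}^\sZ_\beta$ is fully faithful and $\cA$ is a fusion subcategory equivalent to $\cD_e$. Condition (1) is immediate, as $\widetilde{\cF}^\sZ_\beta$ lands in $\Rep(G)'$. For (2), full faithfulness gives $\FPdim(\cA)=\FPdim(\cD_e)$, and since $\cD$ is faithfully $G$-graded all homogeneous components have equal Frobenius--Perron dimension, whence $\FPdim(\cD)=|G|\,\FPdim(\cD_e)=|G|\,\FPdim(\cA)$ by \cite{MR3242743}. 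Condition (3) holds because the defining equation $\Forget_\cC\circ\widetilde{\cF}^\sZ_\beta=i\circ\cF^\sZ_\beta$ exhibits $\Forget_Z|_\cA$, under the equivalence $\cA\simeq\cD_e$, as the fully faithful inclusion $\cD_e\hookrightarrow\cD$, so $\Forget_Z|_\cA$ is fully faithful and $\cA$ is transverse to $I(1)$ by the reinterpretation above.

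\emph{Reverse direction.} Conversely, let $\cA\subseteq Z(\cD)$ be a full replete fusion subcategory satisfying (1)--(3). By the reinterpretation of (3), $\Forget_Z|_\cA$ is fully faithful; by (1) and Lemma~\ref{lem:LandInTrivialComponent} its image lies in $\cD_e$; and by (2) together with $\FPdim(\cD)=|G|\,\FPdim(\cD_e)$ and full faithfulness, $\FPdim(\Forget_Z(\cA))=\FPdim(\cA)=\FPdim(\cD_e)$. A fusion subcategory of $\cD_e$ of full Frobenius--Perron dimension is all of $\cD_e$ \cite{MR3242743}, so $\Forget_Z|_\cA\colon\cA\to\cD_e$ is an equivalence. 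Transporting the braiding of $Z(\cD)$ restricted to $\cA$ along this equivalence defines a braiding $\beta$ on $\cD_e$, and $(\Forget_Z|_\cA)^{-1}$ is then a braided fully faithful functor $\widetilde{\cF}^\sZ\colon(\cD_e,\beta)\to Z(\cD)$ with image $\cA$. The convention $\beta_{(c,\sigma),(c',\sigma')}=\sigma'_{c,c'}$ for the braiding on $Z(\cD)$ shows that restricting the half-braidings of objects of $\cA$ to $\cD_e$ recovers exactly $\beta$, so $\widetilde{\cF}^\sZ$ is a lift of the self-enrichment $\cF^\sZ_\beta$ with $\Forget_\cC\circ\widetilde{\cF}^\sZ=i\circ\cF^\sZ_\beta$. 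Feeding this into Theorem~\ref{thm:ClassificationOfGCrossedBraidingsUsingBrPicC} produces a $G$-crossed braiding. These two constructions are mutually inverse: the image of the reconstructed lift is again $\cA$, and any lift $\widetilde{\cF}^\sZ_\beta$ is recovered from its image up to monoidal natural isomorphism, since it is a section of the equivalence $\Forget_Z|_\cA$ and hence determines the same equivalence class of $G$-crossed braiding.

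\emph{Main obstacle.} The one genuinely substantive point is the reinterpretation of condition (3) as full faithfulness of $\Forget_Z|_\cA$, together with the Frobenius--Perron counting that upgrades this to the equivalence $\cA\simeq\cD_e$; everything else is bookkeeping in assembling the earlier theorems. A secondary care point is the normalisation of the ``on the nose'' condition $\Forget_\cC\circ\widetilde{\cF}^\sZ=i\circ\cF^\sZ_\beta$: passing between a subcategory and an actual functor requires choosing a section of $\Forget_Z|_\cA$, but any two choices are naturally isomorphic and so define the same equivalence class of $G$-crossed braiding, which is all that is recorded.
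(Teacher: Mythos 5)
Your proposal is correct and follows essentially the same route as the paper: stratify $G$-crossed braidings by the induced braiding on $\cD_e$, identify each stratum with lifts of the self-enrichment via Theorem~\ref{thm:ClassificationOfGCrossedBraidingsUsingBrPicC}, pass to the image subcategory, and use the Frobenius--Perron count $\FPdim(\cA)=\FPdim(\cD)/|G|=\FPdim(\cD_e)$ to show $\Forget_Z|_\cA$ is an equivalence onto $\cD_e$. Your only addition is to spell out, via the adjunction $Z(\cD)(a\to I(1))\cong\cD(\Forget_Z a\to 1)$ and rigidity, the equivalence between transversality to $I(1)$ and full faithfulness of $\Forget_Z|_\cA$ --- a step the paper asserts without proof.
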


\begin{proof}
We have just shown that G-crossed braidings are classified by braidings on the trivial component
$\cD_{e}$, and a lift of this braided category to $Z(\cD)$. Given a braiding $\sigma$ on $\cD_{e}$
which lifts to the center of $\cD$, this defines a full subcategory $\cA\subseteq
\Rep(G)^{\prime}\cap Z(\cD)$, which is equivalent as a braided fusion category to $\cD_{e}$ with braiding $\sigma$. 
By construction $\FPdim(\cA)=\FPdim(\cD_{e})=\FPdim(\cD)/|G|$ as desired. 
Furthermore,
since the forgetful functor $\Forget_Z|_{\cA}$ is fully faithful, $\cA$ is transverse to $I(1)$.

Conversely, given a subcategory $\cA\subseteq \Rep(G)^{\prime}\cap Z(\cD)$ (condition (3)) which is
transverse to $I(1)$ (condition (2)), $\Forget_Z|_{\cA}$ is fully faithful. Since $\cA$ centralizes
$\Rep(G)$, $\Forget(\cA)\subseteq \cD_{e}$. If moreover condition (1) holds, 
$$
\FPdim(\cA)
=
\FPdim(\Forget_Z(\cA))
=
\frac{\FPdim(\cD)}{|G|}
=
\FPdim(\cD_{e}),
$$ 
and thus $\Forget_Z|_{\cA}$ is an equivalence. 
Thus we can transport the half-braidings induced from $\cA$ onto $\cD_{e}$, to obtain a braiding which lifts to the center.

It is clear these two constructions are mutually inverse.
\end{proof}

We note that a $G$-crossed braiding, by definition, is additional structure on a fusion category
consisting of an entire categorical action by $G$ and a family of natural isomorphisms satisfying complicated
coherences. In the following two subsections, we apply Theorem \ref{Gcrossedthm} to provide a
complete classification of $H$-crossed braidings on group theoretical categories of the form
$\Vec(G, \omega)$ and $\Rep(G)$.

\subsection{Example: \texorpdfstring{$\Vec(G, \omega)$}{Vec G omega}}

First, we consider pointed categories $\cD=\Vec(G, \omega)$ where $G$ is a finite group and $\omega\in Z^{3}(G, \mathbb{C}^{\times})$.

We recall the results of \cite{MR2552301}, which classifies fusion subcategories of $Z(\Vec(G, \omega))$.
To state these results, given a normalized 3-cocycle $\omega$, for any triple of elements $a,g,h\in G$, we define the function
$$
\beta_{a}(g,h):=\frac{\omega(a,g,h) \omega(g,h,h^{-1}g^{-1}agh)}{\omega(g,g^{-1}ag, h)}
$$
Letting $C_{G}(a)=\{g\in G\ : ga=ag\}$, then $\beta_{a}|_{C_{G}(a)\times C_{G}(a)}\in
Z^{2}(C_{G}(a), \mathbb{C}^{\times})$. Isomorphism classes of simple objects in $Z(\cC)$ are then
classified by pairs $(a, \chi)$, where $a\in G$ is a representative of a conjugacy class and $\chi$
is an irreducible $\beta_{a}$-projective representation of $C_{G}(a)$. \cite{MR1048699,MR1770077}

\begin{defn} Let $L,M \triangleleft G$ be commuting normal subgroups. 
A function $B:L\times M\rightarrow \mathbb{C}^{\times}$ is called an $\omega$-\emph{bicharacter} if
\begin{enumerate}
\item
$B(\ell,mn)=\beta^{-1}_{\ell}(m,n)B(\ell,m)B(\ell,n)$
for all
$\ell\in L$ and $m,n\in M$
\item
$B(k\ell,m)=\beta_{m}(k,\ell)B(k,m)B(\ell,m)$
for all
$k,\ell\in L$ and $m\in M$
\end{enumerate}
An $\omega$-bicharacter $B:L\times M\rightarrow \mathbb{C}^{\times}$ is called $G$-\emph{invariant} if moreover
\begin{enumerate}
\item[(3)]
$
B(g^{-1}\ell g, m)
=
\beta_{\ell}(g,m) \beta_{\ell}(gm,g^{-1})\beta^{-1}_{\ell}(g,g^{-1})B(\ell, gmg^{-1})
$
for all $g\in G$, $\ell\in L$, and $m\in M$.
\end{enumerate}
\end{defn}

We recall the following classification theorem.

\begin{thm}[{\cite[Thm.~5.11]{MR2552301}}]
Full and replete fusion subcategories of $Z(\Vec(G,\omega))$ are classified by the following data
\begin{itemize}
\item
a pair $L,M$ of commuting normal subgroups of $G$, and
\item
a $G$-invariant $\omega$-bicharacter $B:L\times M\rightarrow \mathbb{C}^{\times}$.
\end{itemize}
\end{thm}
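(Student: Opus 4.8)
The plan is to identify $Z(\Vec(G,\omega))$ with $\Rep(D^{\omega}(G))$, the representation category of the twisted Drinfeld double, which is a non-degenerate (modular) braided fusion category whose simple objects are precisely the pairs $(a,\chi)$ recalled above. Non-degeneracy is the organizing principle I would exploit: the M\"uger centralizer $\cA \mapsto \cA'$ is an order-reversing involution on the lattice of full replete fusion subcategories, with $\FPdim(\cA)\FPdim(\cA')=|G|^2$. The bijection I aim to construct sends a fusion subcategory $\cA$ to a triple $(L,M,B)$, where $M$ records the ``support'' of $\cA$ among group elements, $L$ records the support of the centralizer $\cA'$, and $B$ is a monodromy pairing between the two supports; the three $\omega$-bicharacter axioms should then be exactly the conditions that make this data well-defined and tensor-closed.

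First I would treat the forward direction. Given a full replete fusion subcategory $\cA$, set $M := \{a\in G : (a,\chi)\in\cA \text{ for some }\chi\}$. Since $\cA$ is replete (hence a union of whole conjugacy-class families of simples) and closed under tensor product and duals, and since the fusion rules of $D^{\omega}(G)$ are supported on products of conjugates, $M$ is a normal subgroup of $G$; defining $L := M(\cA')$ from the centralizer gives a second normal subgroup. The crucial point $[L,M]=1$ comes from transparency: if $(a,\chi)\in\cA$ and $(b,\psi)\in\cA'$ their double braiding is trivial, and inspecting the monodromy of $D^{\omega}(G)$ forces $ab=ba$, so $L$ and $M$ commute elementwise, whence $L\subseteq C_G(a)$ for every $a\in M$. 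For commuting $\ell\in L$ and $m\in M$, the transparency of $(\ell,-)$ against $(m,\chi)$ extracts a scalar $B(\ell,m)$ from the value of $\chi(\ell)$, normalized by $\dim\chi$ and the projective cocycle $\beta_m$. Checking that $B$ is an $\omega$-bicharacter and is $G$-invariant then amounts to translating the associativity of the fusion rules (which carry the $\beta_a$ factors) and the $G$-equivariance of the braiding into conditions (1)--(3).

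For the reverse direction I would, given $(L,M,B)$, define $\cA_{(L,M,B)}$ to be the full subcategory on those simples $(a,\chi)$ with $a\in M$ and with the restriction of $\chi$ to $L$ pinned by $B(-,a)$ up to the $\beta_a$-twist; the $\omega$-bicharacter axioms are precisely what is needed to verify that this collection is closed under tensor product and duals, hence a genuine fusion subcategory, and the symmetry of the construction recovers $\cA'_{(L,M,B)}$ as the subcategory attached to the transposed triple. Comparing supports and monodromy scalars shows the two assignments are mutually inverse. The hardest step will be the cocycle bookkeeping underlying both directions: verifying that the three explicit identities for $B$, written through $\beta_{a}(g,h)=\omega(a,g,h)\,\omega(g,h,h^{-1}g^{-1}agh)/\omega(g,g^{-1}ag,h)$, are exactly equivalent to well-definedness, tensor-closure, and conjugation-invariance of the pinning data. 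This is where the twist by $\omega$ enters nontrivially and must be tracked carefully through both the fusion and the braiding of the twisted double, and it is the part of the argument most likely to require delicate normalization choices.
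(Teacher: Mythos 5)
First, a point of order: the paper does not prove this statement at all --- it is recalled verbatim from \cite[Thm.~5.11]{MR2552301} (Naidu--Nikshych--Witherspoon) as input for the $G$-crossed braiding classification, so your proposal can only be measured against the source's argument. Measured that way, your outline reconstructs its architecture faithfully: identify $Z(\Vec(G,\omega))$ with $\Rep(D^{\omega}(G))$ with simples $(a,\chi)$; read off one subgroup as the support of $\cA$ (normal because the tensor product of $(a,\chi)$ and $(b,\psi)$ is graded over the full product of conjugacy classes) and the other from the centralizer, equivalently from $\cA\cap\Rep(G)=\Rep(G/M)$; extract $B$ from monodromy; and define the explicit subcategories $\cS(L,M,B)$ of simples with prescribed support and pinned restriction of characters, with the centralizer given by the transposed triple $\cS(M,L,(B^{\op})^{-1})$. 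Note that your labels are transposed relative to the paper's conventions (the paper takes $L$ to be the normal subgroup generated by the image of the forgetful functor and $M$ determined by $\Rep(G/M)=\cA\cap\Rep(G)$, so your $(M,L)$ is the paper's $(L,M)$); this is harmless precisely because of the transpose symmetry of the centralizer formula.

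Two steps you leave implicit deserve more care, because they carry most of the weight in the actual proof. (i) \emph{Well-definedness of $B$}: you ``extract a scalar $B(\ell,m)$ from the value of $\chi(\ell)$, normalized by $\dim\chi$,'' but for this to define a function of $(\ell,m)$ you need that $\chi(\ell)/\chi(1)$ is independent of which projective character $\chi$ accompanies a given support element $m$ in $\cA$. This pinning lemma requires the full strength of transparency --- the double braiding being the identity componentwise on the underlying $G$-grading, not merely nonvanishing of an $S$-matrix entry --- and the same componentwise argument is what actually forces $L$ and $M$ to commute elementwise. (ii) \emph{Exhaustion}: support plus pinning only yields the containment $\cA\subseteq\cS(L,M,B)$; to upgrade this to equality you should run the same argument on $\cA'$ to get $\cA'\subseteq\cS(M,L,(B^{\op})^{-1})=\cS(L,M,B)'$ and then invoke M\"uger's double centralizer theorem $\cA''=\cA$, valid since $Z(\Vec(G,\omega))$ is non-degenerate. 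You announce exactly this tool (the order-reversing involution with $\FPdim(\cA)\FPdim(\cA')=|G|^{2}$) in your opening paragraph but never discharge it. With those two steps made explicit, and the $\omega$-cocycle bookkeeping you candidly defer actually carried out (this is where the identities defining an $\omega$-bicharacter are verified to be equivalent to tensor-closure and conjugation-invariance), your outline coincides with the published proof.
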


Given such an abstract fusion subcategory $\cA$, the subgroup $L$ is determined by the normal
subgroup of $G$ generated by the image of the forgetful functor, while $M$ is determined by
$\Rep(G/M)=\cA\cap \Rep(G)$, where $\Rep(G)$ denotes the canonical copy of $\Rep(G)\subset
Z(\Vec(G,\omega))$. See \cite{MR2552301} for an explanation of the role of the bicharacter $B$.

We denote the subcategory associated to the above data as $\cS(L,M, B)$.
In this notation, the canonical subcategory $\Rep(G)$ is $\cS(1,1, 1)$, and the trivial subcategory $\Vec$ is $\cS(1,G, 1)$.
We further recall the following facts from \cite{MR2552301}.

\begin{itemize}
\item  $\FPdim(\cS(L,M,B))=|L|[G:M]$ \cite[Lem.~5.9]{MR2552301}.
\item  $\cS(L,M, B)^{\prime}=\cS(M,L, (B^{\op})^{-1})$ \cite[Lem.~5.10]{MR2552301}.
\item  $\cS(L,M, B)\subseteq \cS(L^{\prime}, M^{\prime}, B^{\prime})$ if and only if $L\subseteq L^{\prime}$, $M^{\prime}\subseteq M$ and $B|_{L\times M^{\prime}}=B^{\prime}|_{L\times M^{\prime}}$ \cite[Prop.~6.1]{MR2552301}.
\end{itemize}

\begin{prop}
Suppose we have a faithful $H$-grading 
on $\Vec(G, \omega)$ given by a surjective homomorphism $\pi:G\rightarrow H$. 
Then if $\Vec(G, \omega)$ admits an $H$-crossed braiding, $\ker(\pi)\subseteq Z(G)$. 
In this case, H-crossed braidings are classified by $G$-invariant $\omega$-bicharacters $B: \ker(\pi)\times G\rightarrow \mathbb{C}^{\times}$.
\end{prop}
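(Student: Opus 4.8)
The plan is to push the three conditions of Theorem \ref{Gcrossedthm} through the group-theoretic classification of fusion subcategories of $Z(\Vec(G,\omega))$ recorded above from \cite{MR2552301}. By Theorem \ref{Gcrossedthm} (applied with grading group $H$ and base category $\cD = \Vec(G,\omega)$), $H$-crossed braidings on $\Vec(G,\omega)$ are in bijection with full and replete fusion subcategories $\cA \subseteq Z(\Vec(G,\omega))$ satisfying (1) $\cA \subseteq \Rep(H)'$, (2) $|H|\,\FPdim(\cA) = \FPdim(\cD) = |G|$, and (3) transversality to $I(1)$. Every such $\cA$ equals $\cS(L,M,B)$ for a pair $L,M$ of commuting normal subgroups of $G$ and a $G$-invariant $\omega$-bicharacter $B \colon L \times M \to \mathbb{C}^\times$, so the entire argument reduces to determining which triples $(L,M,B)$ survive (1)--(3).

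First I would identify the relevant canonical subcategories. The $H$-grading via $\pi$ has trivial component $\cD_e = \Vec(\ker\pi, \omega|_{\ker\pi})$, and its canonical copy of $\Rep(H) = \Rep(G/\ker\pi)$ sits inside the canonical $\Rep(G) = \cS(1,1,1)$ as $\cS(1,\ker\pi,1)$ (matching Frobenius--Perron dimensions and the conventions $\Rep(G) = \cS(1,1,1)$, $\Vec = \cS(1,G,1)$). Using $\cS(L,M,B)' = \cS(M,L,(B^{\op})^{-1})$ from \cite[Lem.~5.10]{MR2552301}, this gives $\Rep(H)' = \cS(\ker\pi,1,1)$, so by the containment criterion \cite[Prop.~6.1]{MR2552301} condition (1) becomes exactly $L \subseteq \ker\pi$. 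Using $\FPdim(\cS(L,M,B)) = |L|[G:M]$ from \cite[Lem.~5.9]{MR2552301} together with $|H| = |G|/|\ker\pi|$, condition (2) becomes $|L|[G:M] = |\ker\pi|$.

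The heart of the proof is extracting the consequences of condition (3). Transversality to $I(1)$ is precisely the statement that $\Forget_Z|_\cA$ is fully faithful; since a fully faithful functor into the pointed category $\Vec(G,\omega)$ must send each simple of $\cA$ to a simple (hence invertible) object and preserve $\FPdim$, the category $\cA$ is forced to be pointed and $\Forget_Z(\cA)$ is a pointed subcategory $\Vec(S,\omega|_S)$ with $|S| = \FPdim(\cA)$. Because $\cA \subseteq \Rep(H)'$, Lemma \ref{lem:LandInTrivialComponent} places $\Forget_Z(\cA)$ inside $\cD_e$, whence $S \subseteq \ker\pi$; combined with condition (2), which forces $|S| = \FPdim(\cA) = |\ker\pi|$, this yields $S = \ker\pi$. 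Now $L$ is the normal closure in $G$ of the forgetful support $S = \ker\pi$, which is already normal, so $L = \ker\pi$; feeding this back into condition (2) gives $[G:M] = 1$, i.e.\ $M = G$. Thus the only surviving data are $\cS(\ker\pi, G, B)$, and such a subcategory exists if and only if $\ker\pi$ and $G$ commute as normal subgroups, i.e.\ $\ker\pi \subseteq Z(G)$; this gives the first assertion, after which the remaining freedom is exactly the choice of a $G$-invariant $\omega$-bicharacter $B \colon \ker\pi \times G \to \mathbb{C}^\times$, giving the stated classification.

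I expect the main obstacle to be the rigorous bookkeeping of the previous paragraph: one must verify that transversality to $I(1)$ is genuinely equivalent to full faithfulness of $\Forget_Z|_\cA$ (implicit in the proof of Theorem \ref{Gcrossedthm}), that $L$ is correctly recovered as the normal closure of the forgetful support and $M$ via $\Rep(G/M) = \cA \cap \Rep(G)$, and that these identifications interlock with the dimension count so that conditions (1)--(3) collapse \emph{exactly} to $L = \ker\pi$ and $M = G$, with no residual constraint on $B$ beyond $G$-invariance.
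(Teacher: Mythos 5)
Your proposal is correct and lives in the same framework as the paper's proof---Theorem \ref{Gcrossedthm} combined with the $\cS(L,M,B)$ classification of fusion subcategories of $Z(\Vec(G,\omega))$ from \cite{MR2552301}---but you process the transversality condition by a genuinely different mechanism. The paper converts condition (3) directly into the statement $M=G$: since $I(1)=\cO(G)$ contains every irreducible of the canonical $\Rep(G)$, the subcategory $\cS(L,M,B)$ is transverse to $I(1)$ \emph{if and only if} $\cS(L,M,B)\cap\Rep(G)=\Rep(G/M)=\Vec$. From $M=G$ it deduces $L\le Z(G)$ (as $L$ must centralize $M$), then $L\le\ker(\pi)$ from condition (1), and finally $L=\ker(\pi)$ from the dimension count. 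You instead recast transversality as full faithfulness of $\Forget_Z|_{\cA}$ (an equivalence the paper itself uses inside the proof of Theorem \ref{Gcrossedthm}), conclude that $\cA$ is pointed with forgetful support a subgroup $S$, pin down $S=\ker(\pi)$ via Lemma \ref{lem:LandInTrivialComponent} together with the dimension count, recover $L$ as the normal closure of $S$, and only then extract $M=G$ and $\ker(\pi)\le Z(G)$ from the requirement that $L$ and $M$ be commuting normal subgroups. Both deductions are valid; yours buys a more structural picture ($\cA$ is literally a lifted copy of $\Vec(\ker(\pi),\omega|_{\ker(\pi)})$), while the paper's buys a clean if-and-only-if.

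That if-and-only-if is exactly what your write-up is missing in the converse direction, which you flag in your final paragraph but do not carry out: to conclude there is ``no residual constraint on $B$'' you must verify that \emph{every} $\cS(\ker(\pi),G,B)$ satisfies conditions (1)--(3), and with your characterization, transversality of $\cS(\ker(\pi),G,B)$---that is, full faithfulness of $\Forget_Z$ restricted to it---is not immediate for arbitrary $B$; checking it directly requires describing the simple objects $(a,\chi)$ with $a\in\ker(\pi)\subseteq Z(G)$ of this subcategory and seeing that they map to distinct invertibles $\delta_a$. In the paper's formulation this is automatic: transversality is equivalent to $M=G$, which holds by construction, and conditions (1) and (2) follow at once from the containment criterion and $\FPdim(\cS(\ker(\pi),G,B))=|\ker(\pi)|$. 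So either import the paper's equivalence (transverse $\Leftrightarrow$ $\cA\cap\Rep(G)=\Vec$ $\Leftrightarrow$ $M=G$) or supply the simple-object computation; with that one addition your argument is complete.
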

\begin{proof}
It suffices to show that subcategories of $Z(\Vec(G,\omega))$ satisfying the conditions of
\ref{Gcrossedthm} are precisely those of the form $\cS(\ker(\pi), G, B)$, where $B$ is an arbitrary
$G$-invariant $\omega$-bicharacter. Note that $\cS(L,M,B)$ is transverse to $I(1)$ if and only if
$\Rep(G/M)=\cS(L,M,B)\cap \Rep(G)=\Vec$, since $I(1)=\cO(G)\in \Rep(G)$ contains all the irreducible
objects of $\Rep(G)$. Thus $M=G$. Note this implies $L\le Z(G)$, since $L$ must centralize $M$. Now
observe that $\cS(L,G,B)$ centralizes $\Rep(H)=\Rep(G/\ker(\pi))=\cS(1, \ker(\pi), 1)$ if and only
if $\cS(L,G,B)\le \cS(\ker(\pi), 1, 1)$, which can be restated as $L\le \ker(\pi)$, $1\le G$, and
$B|_{L\times 1}=1$, where the last follows automatically from the properties of
$\omega$-bicharacters. Finally, the third condition is $\FPdim(\cS(L,G,B))=|G|/|H|=|\ker(\pi)|$.
However, $\FPdim(\cS(L,G,B))=|L|[G:G]=|L|$, and thus we must have $|L|=|\ker(\pi)|$. But since $L\le
\ker(\pi)$, we must have equality, which concludes the proof.
\end{proof}

As a special case, we recover the following well known corollary.

\begin{cor}
There is a unique $G$-crossed braiding on $\Vec(G,\omega)$.
\end{cor}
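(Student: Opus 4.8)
The plan is to deduce the corollary directly from the preceding proposition by specializing to $H = G$ and the identity homomorphism $\pi = \id_G \colon G \to G$, which is precisely the tautological faithful $G$-grading on $\Vec(G,\omega)$ whose $g$-graded component is spanned by $\delta_g$. First I would observe that with this choice $\ker(\pi) = \{e\}$, so the necessary condition $\ker(\pi) \subseteq Z(G)$ for existence is satisfied automatically, and the proposition reduces the classification of $G$-crossed braidings to counting $G$-invariant $\omega$-bicharacters $B \colon \{e\} \times G \to \mathbb{C}^{\times}$.

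The heart of the argument is then to show that there is exactly one such bicharacter. I would invoke condition $(2)$ in the definition of an $\omega$-bicharacter with $k = \ell = e$, which yields $B(e,m) = \beta_m(e,e)\, B(e,m)^2$ for every $m \in G$. Since $\omega$ is normalized, the explicit formula for $\beta$ gives $\beta_m(e,e) = \omega(m,e,e)\,\omega(e,e,m)/\omega(e,m,e) = 1$, so that $B(e,m) = B(e,m)^2$. As $B(e,m) \in \mathbb{C}^{\times}$, this forces $B(e,m) = 1$ for all $m$; hence $B$ is the trivial bicharacter, and it is visibly $G$-invariant. Thus the set of $G$-invariant $\omega$-bicharacters $\{e\} \times G \to \mathbb{C}^{\times}$ is a singleton, and the corollary follows.

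As a sanity check, and as an alternative to the bicharacter bookkeeping, I would note that the same conclusion drops out of Theorem \ref{Gcrossedthm} directly: any subcategory $\cA \subseteq Z(\Vec(G,\omega))$ classifying a $G$-crossed braiding must satisfy $|G|\,\FPdim(\cA) = \FPdim(\Vec(G,\omega)) = |G|$, so $\FPdim(\cA) = 1$ and therefore $\cA = \Vec$. Since $\Vec$ is the unique full and replete fusion subcategory of Frobenius--Perron dimension $1$, and it trivially meets conditions $(1)$--$(3)$ of Theorem \ref{Gcrossedthm}, both existence and uniqueness are immediate; in the notation of this subsection this is the subcategory $\cS(1,G,1)$.

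I do not anticipate a genuine obstacle here. The only points requiring care are identifying that ``the $G$-crossed braiding on $\Vec(G,\omega)$'' refers to the tautological $G$-grading (so that $\ker(\pi)$ is trivial), and carrying out the short normalization computation $\beta_m(e,e) = 1$ that collapses the bicharacter to the trivial one. All of the substantive work has already been carried out in the preceding proposition and in Theorem \ref{Gcrossedthm}.
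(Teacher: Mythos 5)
Your proof is correct and takes the paper's (implicit) route: the paper states this corollary as the special case $H=G$, $\pi=\id_G$ of the preceding proposition, where $\ker(\pi)=\{e\}$ leaves only the trivial $G$-invariant $\omega$-bicharacter $B:\{e\}\times G\to \mathbb{C}^{\times}$, exactly as you argue, and your normalization computation $\beta_m(e,e)=1$ forcing $B(e,m)=B(e,m)^2=1$ supplies the one detail the paper leaves unstated. Your sanity check via Theorem \ref{Gcrossedthm} --- condition (2) forces $\FPdim(\cA)=1$, hence $\cA=\Vec=\cS(1,G,1)$, which visibly satisfies conditions (1) and (3) --- is also valid and amounts to the same classification.
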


\begin{remark}
Recall that a braiding on a $G$-graded fusion category $\cD$ can be viewed as a $G$-crossed braiding
together with an extra piece of data, namely a trivialization of the categorical action
$\uG\rightarrow \uAut_{\otimes}(\cD)$. For example, when $G$ is abelian, we have a unique
$G$-crossed braiding on $\Vec(G)$, where the $G$ action is by conjugation, and the $G$-braiding is
the identity. However, we have several different braidings on  $\Vec(G)$ which correspond to
distinct trivializations of the conjugation action, which it is easy to show correspond to
bicharacters on $G$.
\end{remark}

\subsection{Example: \texorpdfstring{$\Rep(G)$}{Rep(G)}}

Now we consider the case where $\cD=\Rep(G)$, and we consider its center in terms of $Z(\Vec(G))$,
where we can use the convenient description as above. In this case, the universal grading group is
the dual group $\widehat{Z(G)}$. The copy of $\Rep(\widehat{Z(G)})\cong \Vec(Z(G))$ sitting inside
$Z(\Vec(G))$ is identified with the objects which are direct sums of objects $(z,1)$ where $z\in
Z(G)$ represents a conjugacy class, and $1$ is the trivial representation of the centralizer
subgroup of $z$ (which is $G$).

Note that all (normal) subgroups of $\widehat{Z(G)}$ are of the form 
$$
H^{\perp}=\set{\gamma\in \widehat{Z(G)}}{\gamma(h)=1\ \forall h\in H}
$$
for some $H\leq Z(G)$.
Thus faithful grading groups are given by quotients $\widehat{Z(G)}/H^{\perp}$, and $\Rep(\widehat{Z(G)}/H^{\perp})\cong \Vec(H)\subseteq \Vec(Z(G))$. 

We have the following result:

\begin{prop}
For $H\le Z(G)$, faithful $\widehat{Z(G)}/H^{\perp}$-crossed braidings on $\Rep(G)$ are classified by triples $(L,M,B)$, such that
\begin{itemize}
\item
$M\triangleleft G$ is normal such that $H\leq M$ and $M/H$ is abelian,
\item
$L\triangleleft G$ is abelian and commutes with $M$, and
\item
$B:L\times M/H\rightarrow \mathbb{C}^{\times}$ is a non-degenerate G-invariant bicharacter.
\end{itemize}
\end{prop}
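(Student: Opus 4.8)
The plan is to invoke Theorem~\ref{Gcrossedthm} with grading group $K := \widehat{Z(G)}/H^{\perp}$ and then transport each of its three defining conditions through the Gelaki--Naidu--Nikshych classification \cite{MR2552301} of the fusion subcategories $\cS(L,M,B)$ of $Z(\Vec(G)) = Z(\Rep(G))$ (here $\omega=1$, so the $\omega$-bicharacters are ordinary bicharacters). Since $(H^{\perp})^{\perp}=H$ we have $K \cong \widehat{H}$, hence $|K| = |H|$, while $\FPdim(\Rep(G)) = |G|$. As recalled just above, the canonical copy of $\Rep(K)$ is $\Vec(H) \subseteq \Vec(Z(G)) \subseteq Z(\Vec(G))$; in the $\cS$-notation this is $\cS(H,G,1)$, so by the centralizer formula $\cS(L,M,B)' = \cS(M,L,(B^{\op})^{-1})$ its M\"uger centralizer is $\Rep(K)' = \cS(G,H,1)$.

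First I would dispatch the centralizer and dimension conditions. By the containment criterion \cite[Prop.~6.1]{MR2552301}, a candidate $\cA = \cS(L,M,B)$ satisfies $\cA \subseteq \cS(G,H,1)$ exactly when $H \subseteq M$ and $B|_{L\times H}\equiv 1$; the second clause says precisely that $B$ descends to a bicharacter $\bar{B}\colon L\times M/H \to \bbC^{\times}$. Using $\FPdim(\cS(L,M,B)) = |L|[G:M]$, the dimension condition $|K|\FPdim(\cA)=\FPdim(\Rep(G))$ becomes $|H|\,|L|\,|G|/|M| = |G|$, i.e. $|M/H| = |L|$.

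The crux is the transversality condition, which behaves differently from the $\Vec(G,\omega)$ case because here the relevant forgetful functor is $\Forget_Z\colon Z(\Rep(G)) \to \Rep(G)$ rather than the one to $\Vec(G)$. I would compute $\Forget_Z(a,\chi) = \operatorname{Ind}_{C_G(a)}^{G}\chi$, so that by Frobenius reciprocity the trivial representation $\mathbf{1}$ occurs in $\Forget_Z(a,\chi)$ if and only if $\chi = \mathbf{1}$. As in the proof of Theorem~\ref{Gcrossedthm}, transversality of $\cA$ to $I(1)$ is equivalent to $\Forget_Z|_{\cA}$ being fully faithful, and via the adjunction $\Hom_Z(a,I(1)) \cong \Hom_{\Rep(G)}(\Forget_Z(a),\mathbf{1})$ this is equivalent to $\cA$ containing no nontrivial simple of the form $(a,\mathbf{1})$. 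Recalling that $(a,\mathbf{1}) \in \cS(L,M,B)$ precisely when $a\in L$ and $B(a,-)|_{M}\equiv 1$ (note $M \subseteq C_G(a)$ because $L$ and $M$ commute), transversality says exactly that the left radical of $B$ is trivial, equivalently that $\bar{B}$ is left non-degenerate.

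Assembling these, $\cA = \cS(L,M,B)$ satisfies all three conditions iff $H\subseteq M$, and $B$ is pulled back from a pairing $\bar{B}\colon L\times M/H \to \bbC^{\times}$ with $|L|=|M/H|$ that is left non-degenerate; for finite groups of equal order a left non-degenerate bicharacter is automatically non-degenerate, which in turn forces $L$ and $M/H$ to be abelian. Since $L,M$ are commuting normal subgroups and $B$ (hence $\bar{B}$) is $G$-invariant by \cite{MR2552301}, the triple $(L,M,\bar{B})$ is exactly the data in the statement, and conversely any such triple yields a unique $\cS(L,M,B)$ by pulling $\bar{B}$ back along $M\twoheadrightarrow M/H$. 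The main obstacle is the transversality computation, namely identifying $\Forget_Z(a,\chi)$ for the $\Rep(G)$-forgetful functor and checking that non-degeneracy of $\bar{B}$ cleanly repackages the dimension and transversality conditions; the abelianness of $L$ and $M/H$ then emerges as a consequence rather than an input.
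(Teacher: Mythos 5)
Your proposal is correct and follows essentially the same route as the paper: invoke Theorem \ref{Gcrossedthm}, translate the centralizer, dimension, and transversality conditions through the Gelaki--Naidu--Nikshych subcategories $\cS(L,M,B)$ of $Z(\Vec(G))$ (identifying $\Rep(\widehat{Z(G)}/H^{\perp})$ with $\cS(H,G,1)$ and its centralizer with $\cS(G,H,1)$, yielding $H\leq M$, $B|_{L\times H}=1$, and $|L|=[M:H]$), and then obtain abelianness of $L$ and $M/H$ together with non-degeneracy of $\bar{B}$ from the counting argument $|L|=|M/H|\leq|\widehat{M/H}|\leq|M/H|$, exactly as in the paper's Step 2. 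The only difference is minor: where the paper cites \cite[Lem.~5.1]{MR3943750} for the equivalence of transversality to $I(1)$ with injectivity of $\widehat{B}\colon L\to\widehat{M}$, you prove this directly via $\Forget_Z(a,\chi)\cong\operatorname{Ind}_{C_G(a)}^{G}\chi$, Frobenius reciprocity, and the explicit description of the simple objects of $\cS(L,M,B)$, which is a valid (and self-contained) substitute for the citation.
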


\begin{proof}
Consider the faithful $\widehat{Z(G)}/H^{\perp}$-crossed braiding
on $\Rep(G)$ corresponding to the 
fusion subcategory $\cS(L,M,B)\subseteq Z(\Vec(G))=Z(\Rep(G))$
under Theorem \ref{Gcrossedthm}.
\item[\underline{Step 1:}]
The subgroups $L,M \leq G$ and the bicharacter $B$ satisfy:
\begin{itemize}
\item
$M\triangleleft G$ is normal such that $H\leq M$ and $[M:H]=|L|$.
\item
$L\triangleleft G$ commutes with $M$, and
\item
$B: L\times M\rightarrow \mathbb{C}^{\times}$ is a G-invariant bicharacter such that 
$B|_{L\times H}=1$ and the homorphism $\widehat{B}: L\rightarrow \widehat{M},\ l\mapsto B(l,\ \cdot\ )$ is injective.
\end{itemize}
\begin{proof}[Proof of Step 1]
The canonical copy of $\Rep(\widehat{Z(G)}/H^{\perp})\cong \Vec(H)$ is given by the subcategory
$\cS(H, G, 1)$, whose centralizer is $\cS(G, H, 1)$. Thus $\cS(L,M,B)\subseteq \cS(H, G,
1)^{\prime}$ if and only if $L\le G, H\le M$ and $B_{L\times H}=1$. Now, the FP dimension condition
is satisfied if and only if $\FPdim(\cS(L,M,B))=|L|[G:M]=[G:H]$, which happens if and only if
$|L|=\frac{|M|}{|H|}=[M:H]$. Finally, $\cS(L,M,B)$ is transverse to the Lagrangian algebra $I(1)$
(for $\Rep(G)$) if and only if the homorphism $\widehat{B}: L\rightarrow \widehat{M},\ l\mapsto
B(l,\ \cdot\ )$ is injective by \cite[Lem.~5.1]{MR3943750}. 
\end{proof}

\item[\underline{Step 2:}]
$L$ and $M/H$ are abelian, and $\widehat{B}:L\rightarrow \widehat{M/H}$ given by
$\widehat{B}(l):=B(l,\ \cdot\ )$ is an isomorphism, which gives non-degeneracy of the bicharacter.

\begin{proof}[Proof of Step 2]
By the first condition in Step 1, $|L|=|M/H|$.
By the third condition, $\widehat{B}: L\rightarrow \widehat{M/H}$ is an injection. 
Thus we have 
$$
|M/H|=|L|\le |\widehat{M/H}|=|(M/H)/ [M/H,M/H] |\le |M/H|.
$$ 
This forces the equality $|(M/H)/ [M/H,M/H] |= |M/H|$, and thus $M/H$ is abelian. 
Furthermore, this implies $|M/H|=|\widehat{M/H}|$, and thus the injective map $\widehat{B}$ is an isomorphism as claimed.
\end{proof}
\let\qed\relax
\end{proof}

We now consider some examples. 

\begin{example}
When $\widehat{Z(G)}/H^{\perp}=1$ so that $H=1\le Z(G)$, then we should recover braidings on
$\Rep(G)$, which have been classified by \cite{q-alg/9706007} and again by \cite{1801.06125}, and
indeed this is the case.
\end{example}

\begin{example}
Consider the case $H=Z(G)$, so that the grading on $\Rep(G)$ is the universal grading. Then choosing
$M=Z(G)$ and $L=1$ and $B=1$, we obtain the usual braiding on $\Rep(G)$, viewed as a $G$-crossed
braiding.
\end{example}

\renewcommand*{\bibfont}{\small}
\setlength{\bibitemsep}{0pt}
\raggedright
\printbibliography

\end{document}